\definecolor{gr}{rgb}   {0.,   0.69,   0.23 }
\definecolor{bl}{rgb}   {0.,   0.5,   1. }
\definecolor{mg}{rgb}   {0.85,  0.,    0.85}
\definecolor{yl}{rgb}   {0.8,  0.7,   0.}
\definecolor{or}{rgb}  {0.7,0.2,0.2}
\newtheorem{theorem}{Theorem} [section]
\newtheorem{lemma}[theorem]{Lemma}
\newtheorem{proposition}[theorem]{Proposition}
\newtheorem{remark}[theorem]{Remark}
\newtheorem{corollary}[theorem]{Corollary}
\DeclareMathOperator*{\intt}{\int}
\DeclareMathOperator*{\supp}{supp}
\newcommand{\noi}{\noindent}
\newcommand{\Z}{\mathbb{Z}}
\newcommand{\R}{\mathbb{R}}
\newcommand{\T}{\mathbb{T}}
\newcommand{\Tdl}{\mathcal{T}_{\dl} }
\newcommand{\Qdl}{\mathcal{Q}_{\dl} }
\newcommand{\mN}{\mathcal N}
\newcommand{\too}{\longrightarrow}
\let\P= \undefined
\newcommand{\P}{\mathbf{P}}
\newcommand{\F}{\mathcal{F}}
\newcommand{\al}{\alpha}
\newcommand{\be}{\beta}
\newcommand{\dl}{\delta}
\newcommand{\eps}{\varepsilon}
\newcommand{\g}{\gamma}
\newcommand{\s}{\sigma}
\newcommand{\ft}{\widehat}
\newcommand{\wt}{\widetilde}
\newcommand{\dx}{\partial_x}
\newcommand{\dt}{\partial_t}
\newcommand{\dtp}{\partial_{t'}}
\newcommand{\ta}{\theta}
\newcommand{\vda}{v^\dagger}
\newcommand{\wda}{w^\dagger}
\newcommand{\Fda}{F^\dagger}
\newcommand{\tv}{\wt v}
\newcommand{\tvda}{\tilde v^\dagger}
\newcommand{\tw}{\wt w}
\newcommand{\twda}{\wt w^\dagger}
\newcommand{\tE}{\wt E}
\renewcommand{\l}{\ell}
\renewcommand{\O}{\Omega}
\newcommand{\les}{\lesssim}
\newcommand{\ges}{\gtrsim}
\newcommand{\jb}[1]
{\langle #1 \rangle}
\newcommand{\ind}{\mathbf 1}
\newcommand{\M}{\mathcal{M}}
\newcommand{\N}{\mathbb{N}}
\newtheorem*{ackno}{Acknowledgements}
\renewcommand{\H}{\mathcal{H}}
\def\sgn{\textup{sgn}}
\newcommand{\Pbhi}{\mathbf{P}_{\textup{hi}} }
\newcommand{\Pblo}{\mathbf{P}_{\textup{lo}} }
\newcommand{\Pbhip}{\mathbf{P}_{\textup{+,hi}} }
\newcommand{\PbLO}{\mathbf{P}_{\textup{LO}}}
\numberwithin{equation}{section}
\numberwithin{theorem}{section}
\begin{document}
\baselineskip = 14pt

\title[Unconditional deep-water limit for ILW]{Unconditional deep-water limit of the intermediate long wave equation in low-regularity}

\author[J.~Forlano, G.~Li, T.~Zhao]
{Justin Forlano, Guopeng Li and Tengfei Zhao }

\address{
Justin Forlano\\ 
School of Mathematics\\
Monash University\\
 VIC 3800\\
 Australia\\
 and\\
School of Mathematics\\
The University of Edinburgh\\
and The Maxwell Institute for the Mathematical Sciences\\
James Clerk Maxwell Building\\
The King's Buildings\\
Peter Guthrie Tait Road\\
Edinburgh\\ 
EH9 3FD\\
United Kingdom}

\email{justin.forlano@monash.edu}

\address{
Guopeng Li\\ 
School of Mathematics and Statistics\\
Beijing Institute of Technology\\
 Beijing\\
  100081\\
  China\\
  and 
School of Mathematics\\
 The University of Edinburgh\\ and The Maxwell Institute for the Mathematical Sciences\\
  James Clerk Maxwell Building\\
   The King’s Buildings\\
    Peter Guthrie Tait Road\\ 
    Edinburgh\\ EH9 3FD\\
     United Kingdom}

\email{guopeng.li@bit.edu.cn}

\address{
Tengfei Zhao\\ School of Mathematics and Physics\\
University of Science and Technology
Beijing\\
Beijing\\
100083\\
China}

\email{zhao\_tengfei@ustb.edu.cn}

\subjclass[2020]{35Q53, 35A02, 76B55}

\keywords{Intermediate long wave equation, deep-water limit, unconditional uniqueness, well-posedness, Benjamin-Ono equation}

%
%
%

\begin{abstract}

In this paper, we establish the unconditional deep-water limit of the intermediate long wave equation (ILW) to the Benjamin-Ono equation (BO) in low-regularity Sobolev spaces on both the real line and the circle. Our main tool is new unconditional uniqueness results for ILW in $H^s$ when $s_0<s\leq \frac 14$ on the line and $s_0<s< \frac 12$ on the circle, where $s_0 = 3-\sqrt{33/4}\approx 0.1277$. 
Here, we adapt the strategy of Mo\c{s}incat-Pilod (2023) for BO to the setting of ILW by viewing ILW as a perturbation of BO and making use of the smoothing property of the perturbation term.

\end{abstract}

\maketitle

\section{Introduction}
We consider the Cauchy problem for the
 intermediate long wave equation (ILW):
\begin{equation}
\left\{
\begin{aligned}
  & \dt u- \mathcal G_\dl \dx^{2}  u =\dx(u^2),\\
   & u|_{t=0}=u_0,
\end{aligned}
 \right. \label{ILW}
\end{equation}
where $0<\delta<\infty$,  $u:\R\times \M\to \R$
and $\M =\R$ or $
\T = \R/(2\pi \Z)$.
Here, the operator $\mathcal G_\dl$ is defined by   
$\mathcal G_\dl =\mathcal T_\dl -\delta^{-1} \dx^{-1}$, where $\mathcal T_\dl$ is given by
 $$ \ft{\Tdl f}(\xi)  = -i \coth( \delta \xi) \ft f(\xi), \quad \xi \in \widehat \M, $$
$\coth(x)$ is the hyperbolic cotangent function defined by 
$
\coth (x)=\frac{e^{x}+e^{-x}}{e^{x}-e^{-x}}, $ 
$x\in \R\backslash\{0\},$ with the convention $\coth(x)-\frac 1x=0$ for $x=0$,
and
$\widehat \M$ is the 
Pontryagin dual of $\M$, i.e., $\widehat \M=\R$, when $\M=\R$, and 
$\widehat \M=\Z$, when $\M=\T.$

ILW \eqref{ILW} was introduced in \cite{JE,KKD} as a model for the propagation of the internal wave
 of an interface
 in a stratified fluid of finite depth $\delta$. 
It is 
completely integrable and  
is, in a sense, a physically relevant intermediary model between the Korteweg–de Vries equation (KdV) and the Benjamin-Ono equation (BO).
Indeed, formally taking the shallow-water limit
($\delta \to 0$, and after a suitable rescaling), we have that \eqref{ILW} converges to KdV: 
\begin{align}
\dt v+ \dx^3 v=\dx(v^2). \label{KdV}
\end{align}
See Remark~\ref{RMK:shallow} for further details.
 In the deep-water limit 
($\delta \to \infty$), we see directly that the operator
$\Tdl$ tends to the Hilbert transform $\H$ (with Fourier multiplier $-i\,\sgn(\xi)$) and hence \eqref{ILW} reduces to the BO equation:
\begin{align}
\dt v-\mathcal H \dx^2 v=   \dx(v^2),
\label{BO}
\end{align}
see \cite{SAF, 
ABFS,
Saut2019,
Gli,Gli2, CLOP}. 

The validity of such limits has been numerically observed \cite{PD} and their rigorous justification is one of the central important questions related to ILW; see \cite{Saut2019, KS} for nice surveys. In this paper, we focus on the deep-water limit ($\dl \to \infty$) corresponding formally to the convergence of ILW to BO. Our main goal is to rigorously prove the unconditional convergence as $\dl\to \infty$ in low-regularity; see Theorem~\ref{THM:conv} and the discussion below.

Before we can hope to tackle this problem, we first need a satisfactory well-posedness theory for the limiting equation, namely BO, and the family of ILW equations for each $0<\dl<\infty$. Historically, this was a difficult problem as both ILW and BO are quasilinear \cite{MST} at every $L^2$-based Sobolev regularity, which nullifies any argument based on the contraction mapping theorem.
Nonetheless, by now the well-posedness situation for BO on both $\R$ and $\T$ has been settled \cite{Iorio, KTz1, KK, TAO04,
 IK, Moli1, BP, Moli2, MP, MV, IT1, 
GKT-1,GKT-2, KLV}. Namely, BO is well-posed in $H^{s}(\M)$ for any $s>-\frac 12$ and this is sharp \cite{BLin, GKT-2}. In comparison, the situation for ILW is less well-understood, we refer to \cite{ABFS, MV, MPV, Gli}. Nonetheless, there have been significant breakthroughs in the low-regularity well-posedness of ILW. Ifrim-Saut \cite{IS} proved well-posedness in $L^2(\R)$ (and small data long-time dispersive decay estimates) by adapting the argument for BO in \cite{IT1}. Namely, the authors in \cite{IS} use the BO gauge transform and the (quasilinear) normal form method. A key idea was to view ILW as a perturbation of BO in the following sense: 
after a Galilean transform which removes the term $\dl^{-1}\dx$ (see \eqref{Gal}) we may write \eqref{ILW} as 
\begin{align}
\dt u-\mathcal H \dx^2 u=   \dx(u^2)+\Qdl \dx u. \label{eq-v-intro}
\end{align}
The operator $\Qdl:=(\mathcal{T}_{\dl}-\H)\dx$ enjoys a strong smoothing property which suggests that this term is relatively harmless; see Lemma~\ref{Q-est}.
Combining this idea with the unified $L^2$-well-posedness argument for BO from \cite{MP}, the second author with A. Chapouto, T. Oh, and D. Pilod \cite{CLOP}, established well-posedness in $L^2(\M)$ in both geometries $\M=\T$ and $\R$. 

Despite the absence of a scaling symmetry for ILW, the first and second authors, with A. Chapouto, T. Oh, and D. Pilod \cite{CFLOP-2}, identified $H^{-\frac 12}(\M)$ as the critical space for ILW with any depth parameter. They showed ill-posedness when $s<-\frac{1}{2}$ in the sense of failure of continuity of the data-to-solution map, and proved a-priori bounds on smooth solutions for $-\frac 12<s<0$. The latter result relied upon the complete integrability of BO, and interestingly not that of ILW. It remains an important open problem to establish the well-posedness of ILW in negative Sobolev spaces.

Let us now return to the question of the deep-water limit of solutions of ILW to those of BO as $\dl\to \infty$. Our main result in this regard is the following:

\begin{theorem}\label{THM:conv}
Let $\M =\R$ or $\T$ and $s_0<s<\frac 12$, where $s_0 := 3-\sqrt{33/4}\approx 0.1277$. 
Given $u_0\in H^{s}(\M)$, let $\{u_{0,\dl}\}_{1\leq \dl <\infty}\subset H^{s}(\M)$ such that $u_{0,\dl}$ converges to $u_0$ in $H^{s}(\M)$ as $\dl\to \infty$. 
Let $u\in~C(\R;H^{s}(\M))$ denote the unique global-in-time solution to \textup{BO}~\eqref{BO} satisfying $u\vert_{t=0}=u_0$ and let $u_{\dl}\in C(\R;H^{s}(\M))$ be the unique global-in-time solution to \textup{ILW}~\eqref{ILW} with depth parameter $\dl$ and $u_{\dl}\vert_{t=0}=u_{0,\dl}$, where the uniqueness for both the \textup{BO}~\eqref{BO} and the \textup{ILW}~\eqref{ILW} solutions hold in the entire class $C(\R;H^{s}(\M))$.
Then, $u_{\dl}$ converges to $u$ in $C(\R;H^{s}(\M))$ as $\dl\to\infty$\footnote{Here, we endow 
$C(\R;H^{s}(\M))$ with the compact-open topology in time.
}. \end{theorem}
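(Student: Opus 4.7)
The plan is a Bona-Smith-type approximation argument that hinges on the new unconditional uniqueness for ILW in $H^s(\M)$ together with the classical unconditional uniqueness for BO. Fix a time horizon $T > 0$ and write $\|\cdot\|_{C_T H^s} := \|\cdot\|_{C([-T,T];H^s(\M))}$. Let $P_{\leq N}$ denote a frequency projection at scale $N$ and set $u_{0,\dl}^{(N)} := P_{\leq N} u_{0,\dl}$ and $u_0^{(N)} := P_{\leq N} u_0$. Denote by $u_\dl^{(N)}$ and $u^{(N)}$ the corresponding ILW (depth $\dl$) and BO solutions evolving from these smooth data; by unconditional uniqueness, each such smooth global solution lies in the uniqueness class $C(\R;H^s(\M))$. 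The triangle inequality
\begin{equation*}
\|u_\dl - u\|_{C_T H^s} \leq \|u_\dl - u_\dl^{(N)}\|_{C_T H^s} + \|u_\dl^{(N)} - u^{(N)}\|_{C_T H^s} + \|u^{(N)} - u\|_{C_T H^s}
\end{equation*}
then reduces matters to \emph{(i)} convergence for fixed smooth data as $\dl \to \infty$, and \emph{(ii)} uniform-in-$\dl$ continuous dependence of ILW on initial data.

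For \emph{(i)}, at fixed $N$ the difference $w_N := u_\dl^{(N)} - u^{(N)}$ satisfies
\begin{equation*}
\dt w_N - \H\dx^2 w_N = \dx\big((u_\dl^{(N)} + u^{(N)}) w_N\big) + \Qdl \dx u_\dl^{(N)},
\end{equation*}
which is a BO equation linearized around $u^{(N)}$ with forcing $\Qdl \dx u_\dl^{(N)}$. Since the smooth ILW solutions $u_\dl^{(N)}$ persist in $H^k$ uniformly in $\dl \geq 1$ on $[-T,T]$ for every $k$, Lemma~\ref{Q-est} gives $\|\Qdl \dx u_\dl^{(N)}\|_{H^s} \to 0$ as $\dl \to \infty$. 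A standard $H^s$ energy/Gronwall estimate, valid for these smooth solutions, then yields $\|w_N\|_{C_T H^s} \to 0$ as $\dl \to \infty$ for every fixed $N$.

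For \emph{(ii)}, the idea is to apply a continuous dependence statement for the ILW data-to-solution map at the $H^s$ level, uniform in $\dl \geq 1$, to the pair $(u_{0,\dl}, P_{\leq N} u_{0,\dl})$ and obtain
\begin{equation*}
\sup_{\dl \geq 1} \|u_\dl - u_\dl^{(N)}\|_{C_T H^s} \longrightarrow 0 \quad \text{as } N \to \infty,
\end{equation*}
and similarly $\|u^{(N)} - u\|_{C_T H^s} \to 0$ for the BO solution. The former should follow from the same modified-energy/gauge machinery that underlies the new unconditional uniqueness result for ILW: the only $\dl$-dependence enters through the perturbative term $\Qdl \dx u$, whose smoothing estimate in Lemma~\ref{Q-est} is uniform in $\dl \geq 1$ (and in fact improves as $\dl \to \infty$). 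With this stability bound in hand, an $\eps/3$ argument -- first choose $N$ large, then $\dl$ large -- closes the proof on $[-T,T]$, and exhausting $\R$ by compact intervals yields convergence in the compact-open topology.

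The main obstacle is step \emph{(ii)}: establishing uniform-in-$\dl$ continuous dependence in $H^s$ at the borderline regularity $s > s_0$. At this roughness, $\dx(u^2)$ loses a full derivative and a naive $H^s$ energy estimate on the difference of two ILW solutions cannot close -- precisely the obstruction that made even unconditional uniqueness delicate. The Mo\c{s}incat-Pilod strategy adopted in this paper (modified energies, gauge transform, resonance/normal-form analysis) is designed to overcome this, and the smoothing of $\Qdl$ should ensure that every constant and every norm bound remains uniform in $\dl \geq 1$. Thus the crux is to extract this $\dl$-uniformity from the unconditional uniqueness machinery, rather than to develop fundamentally new estimates.
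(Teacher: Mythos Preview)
Your Bona--Smith scheme is a reasonable instinct, but it takes a substantially longer route than the paper does, and your step \emph{(ii)} is not actually proved---you write that uniform-in-$\dl$ continuous dependence ``should follow'' from the unconditional-uniqueness machinery, but extracting a full continuous-dependence statement from a uniqueness argument is genuinely more work (uniqueness compares two solutions with the \emph{same} data, while your step \emph{(ii)} needs a Lipschitz-type bound for \emph{nearby} data, uniformly over the family $\dl\ge 1$). So as written there is a gap.

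The paper sidesteps all of this. Its proof of Theorem~\ref{THM:conv} is essentially three lines: by unconditional uniqueness for BO in $H^s$ (Mo\c{s}incat--Pilod) the given BO solution $u$ must coincide with the particular solution constructed in \cite{IK,Moli3,MP}; by Theorem~\ref{main-thm} each given ILW solution $u_\dl$ must coincide with the particular solution constructed in \cite{IS,CLOP}; and for \emph{those} particular solutions the deep-water convergence is already established in \cite[Theorem~1.2]{CLOP}. In other words, the conditional convergence result of \cite{CLOP} already contains the hard analysis (including, implicitly, the $\dl$-uniform stability you were trying to reprove from scratch), and the only new ingredient needed here is the identification step furnished by unconditional uniqueness. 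Your approach would ultimately work if you invoked \cite{CLOP} for step \emph{(ii)}, but at that point you are doing exactly what the paper does, plus an unnecessary regularisation layer.
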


Theorem~\ref{THM:conv} shows that the 
 convergence in the deep-water limit $(\dl\to \infty)$ of ILW to BO occurs \textit{unconditionally}: if $\{u_{\dl}(0)\}_{1\leq \dl <\infty}$ converges to $u(0)$, then the \textit{entire} net of solutions to ILW $\{u_{\dl}\}_{1\leq \dl <\infty}$, converges to \textit{the} solution $u$ of BO in the whole class $C(\R;H^{s}(\M))$, irrespective of how $u_{\dl}$ or its limit $u$ are constructed and of any additional auxiliary information on them. We note that the previous deep-water convergence results in $H^{s}(\M)$ for $s\geq \frac 12$, shown in \cite{ABFS, Gli, Gli2, CLOP}, do occur unconditionally since, as we discuss below, the unconditional uniqueness in $C(\R;H^{s}(\M))$ of BO and ILW was known for any $s\geq \frac 12$.
 
 In \cite{CLOP}, the authors established that the deep-water limit occurs \textit{conditionally} down to $L^2(\M)$: the ILW solutions they construct converge to those solutions of BO in \cite{MP}.
We point out that when $s\leq \frac 14$ on $\M=\R$ or $s<\frac{1}{2}$ on $\M=\T$, the convergence in Theorem~\ref{THM:conv} is not implied by their result since this convergence is predicated on the fact that their solutions also belong to some auxiliary function spaces and are not just members of $C(\R;H^{s}(\M))$. We also refer to Remark~\ref{RMK:conv} for a brief discussion on the regularity restriction in Theorem~\ref{THM:conv}.

The key tool for upgrading the deep-water limit from a conditional to an unconditional statement is the unconditional uniqueness of solutions to ILW \eqref{ILW} (and BO) in $C(\R;H^{s}(\M))$. Namely, given any $u_0\in H^{s}(\M)$, there is a unique solution to \eqref{ILW} in $C(\R;H^{s}(\M))$ with initial data $u_0$, where uniqueness holds in the entire class $C(\R;H^{s}(\M))$. The notion of unconditional uniqueness was introduced by Kato in \cite{Kato} and implies that the solutions do not depend on how they are constructed. 
 Here, solutions to \eqref{ILW} are understood to belong to $C([0,T]; H^{s}(\M))$ and satisfy \eqref{ILW} in the sense of distributions on $(0,T)\times \M$ (and hence also \eqref{eq-v-intro}) for any $T>0$. 
In order for solutions to ILW to satisfy the equation \eqref{ILW} in the sense of distributions, we need $u(t)\in L^{2}_{\text{loc}}$, and thus we expect unconditional uniqueness for BO and ILW to hold down to $L^2$.
 
 As BO is better understood compared to ILW, there are more unconditional uniqueness results for BO than for ILW. The well-posedness result of Burq-Planchon~\cite{BP} gave unconditional uniqueness in $H^{\frac 12}(\R)$, followed by Molinet-Pilod \cite{MP} whose results lowered this to $s>\frac 14$ on $\R$ and $s\geq \frac{1}{2}$ on $\T$. On the circle, Kishimoto \cite{Kishimoto} pushed this down to $s>\frac 16$, which appeared to be almost sharp since the equation satisfied by the BO gauge transform has a cubic nonlinearity thus needing $u\in H^{\frac 16}\subset L^3$. Recently, Mo\c{s}incat-Pilod \cite{MP-2023} identified that this is not the case and established unconditional uniqueness in both geometries for $s> s_0$ where $s_0<\frac 16$.
We will comment on their method of proof later.

For ILW, the general well-posedness result of Molinet-Vento \cite{MV}
showed unconditional uniqueness for \eqref{ILW} for any $s>\frac12$ on both the real line and the circle. 
As \cite{CLOP} adapts the approach in \cite{MP} to \eqref{eq-v-intro}, they also obtained unconditional uniqueness in $H^{s}(\R)$ for $\frac 14<s\leq \frac 12$ and $H^{\frac 12}(\T)$.
In this paper, we further improve the unconditional uniqueness theory for \eqref{ILW} to the point that it agrees with that for BO on both $\R$ and $\T$.

\begin{theorem}\label{main-thm}
Let $\M=\R$ or $\T$, $0<\delta<\infty$, and $s>s_0$ where $s_0$ is as in Theorem~\ref{THM:conv}. Then, ILW \eqref{ILW} is unconditionally globally well-posed in $C(\R;H^{s}(\M))$. 
More precisely, for any initial data $u_0\in H^s (\M)$, there exists a unique global-in-time solution $u$ to the ILW \eqref{ILW} in the class $C(\R;H^s (\M))$ with $u\vert_{t=0}=u_0$.
\end{theorem}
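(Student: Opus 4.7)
\smallskip
\noindent\textbf{Proof proposal.} Global existence of $H^s$-solutions for $s>s_0$ is already available from the $L^2$ well-posedness of~\cite{CLOP}, so the main task is to upgrade the uniqueness statement from $s>\tfrac14$ on $\R$ and $s\geq\tfrac12$ on $\T$ (both contained in~\cite{CLOP}) down to $s>s_0$, with uniqueness holding in the entire class $C(\R;H^s(\M))$. The strategy is to adapt the gauge-transform-based unconditional uniqueness argument of Mo\c{s}incat--Pilod \cite{MP-2023} for \textup{BO} to \textup{ILW}, exploiting the fact that, after the Galilean change of variables leading to \eqref{eq-v-intro}, \textup{ILW} is a perturbation of \textup{BO} by the smoothing operator $\Qdl\dx$ (cf.\ Lemma~\ref{Q-est}).

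\smallskip
\noindent\emph{Step 1 (Reduction to a difference equation).} Fix $\dl$ and let $u_1,u_2\in C([0,T];H^s(\M))$ be two solutions of \eqref{ILW} with common initial data $u_0\in H^s(\M)$. After Galilean transform, both $u_j$ satisfy \eqref{eq-v-intro}, so the difference $w:=u_1-u_2$ solves a linear equation of Benjamin--Ono type with coefficients depending on $u_1+u_2$, plus a source term $\Qdl\dx w$ that inherits the smoothing of Lemma~\ref{Q-est}.

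\smallskip
\noindent\emph{Step 2 (Gauge transform \& multilinear estimates).} Following~\cite{MP-2023}, introduce the \textup{BO} gauge transform (and its Littlewood--Paley high-frequency variant) applied to each $u_j$, and derive the corresponding equation for the gauged difference. The principal contributions coincide with the ones treated in~\cite{MP-2023}, since the top-order operator in \eqref{eq-v-intro} is that of \textup{BO}. The additional terms generated by applying the gauge transform to $\Qdl\dx u_j$, together with the commutators $[\Qdl\dx,\text{gauge}]$ arising when one substitutes \eqref{eq-v-intro} into time derivatives of the gauged quantities during normal-form reductions, all carry the smoothing factor of $\Qdl$; by Lemma~\ref{Q-est} they are therefore strictly subcritical with respect to the main \textup{BO} contributions and can be absorbed by the same multilinear estimates, possibly after a trivial rearrangement.

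\smallskip
\noindent\emph{Step 3 (Energy inequality and closure).} Combining the \textup{BO} multilinear estimates of~\cite{MP-2023} with the smoothing estimates for the ILW remainders yields an inequality of the form
\begin{equation*}
\|w\|_{L^\infty_TH^s}^2\;\lesssim\;T^{\theta}\,\Phi\bigl(\|u_1\|_{L^\infty_TH^s},\|u_2\|_{L^\infty_TH^s}\bigr)\,\|w\|_{L^\infty_TH^s}^2
\end{equation*}
for some $\theta>0$ and some continuous $\Phi$, where the implicit constant may depend on $\dl$. Choosing $T=T(\|u_0\|_{H^s},\dl)>0$ small enough forces $w\equiv0$ on $[0,T]$; a standard continuation argument (using that $\|u_j\|_{L^\infty H^s}$ stays bounded on any compact time interval by the known global well-posedness in $L^2$) then propagates uniqueness to all of $\R$.

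\smallskip
\noindent\emph{Main obstacle.} The delicate part is Step~2: one must verify that every multilinear and resonance estimate in~\cite{MP-2023} admits an ILW counterpart with error terms of strictly better regularity. The gauge transform does not commute with $\Qdl$, and substituting \eqref{eq-v-intro} into the time derivatives produced by the normal-form procedure generates a proliferation of mixed terms involving both the \textup{BO} nonlinearity $\dx(u^2)$ and the smoothing correction $\Qdl\dx u$. Tracking these through the resonance/high-low decompositions, so that the smoothing of Lemma~\ref{Q-est} is actually available at every stage where it is needed, is where the bulk of the technical work will lie.
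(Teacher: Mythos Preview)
Your high-level strategy is the paper's: pass to \eqref{eq-v}, apply the Tao--Burq--Planchon gauge to each solution, and run the Mo\c{s}incat--Pilod normal-form machinery with the $\Qdl$-contributions absorbed via Lemma~\ref{Q-est}. Two points are missing, however. The closing inequality in Step~3 is oversimplified: the normal form splits according to a modulation threshold $M$, and the integration by parts in time on the piece $\{|\Omega|>M\}$ produces \emph{boundary terms} carrying no factor of $T$ but rather a negative power of $M$. The actual bound (Proposition~\ref{PROP:wtov}, with $w,\wda$ denoting the gauge variables and $v,\vda$ the original solutions) is of the form $\|w-\wda\|\lesssim (TM^{3/2}+M^{-1/16})(\|w-\wda\|+\|v-\vda\|)$, and it must be paired with a reverse estimate (Lemma~\ref{v<w+v}) bounding $v-\vda$ by $w-\wda$; closing requires first $M$ large and then $T$ small, not a single $T^\theta$.

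More seriously, on $\M=\R$ your ``Main obstacle'' paragraph misidentifies where the work lies. Once the normal-form identities are written down, the extra $\Qdl$-terms are indeed routine. The genuine new difficulty is \emph{justifying} the integration-by-parts-in-time step itself. The gauged equation \eqref{eq-w} contains the term $i\dx\Pbhip[e^{iF}\Qdl v]$; since $e^{iF}\Qdl v\in C_TL^2$ but not $C_TL^1$, one cannot show that $t\mapsto\ft w(t,\xi)$ is $C^1$ for fixed $\xi\in\R$ (Remark~\ref{RMK:wC1}), so the product rule $\dt[\ft w\,\ft v]=(\dt\ft w)\ft v+\ft w(\dt\ft v)$ underpinning the normal-form reduction is unavailable. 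The paper resolves this by splitting $\tw=G+B$ into a $C^1$-in-time good part and a bad part \eqref{dec-w}, and proving the required identity for the $B$-contribution directly via difference quotients (Lemmas~\ref{IBP-1} and~\ref{IBP-2}), at both rounds of the reduction. On $\T$ this issue disappears ($e^{iF}\in L^2(\T)$ makes the relevant map $C^1$ at each fixed frequency), but on $\R$ it is the heart of the matter and is not captured by ``tracking mixed terms through resonance decompositions.''
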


Theorem~\ref{main-thm} extends the known unconditional uniqueness on the line to $s_0<s\leq \frac14$ and to $s_0<s<\frac{1}{2}$ on the circle. Interestingly, these results show that $s=\frac 16$ is also not a barrier for the unconditional uniqueness of solutions to ILW.

With Theorem~\ref{main-thm} in hand, we present a proof of Theorem~\ref{THM:conv}.
Let $u_{0}$ and $\{u_{0,\dl}\}_{1\leq \dl <\infty}$ be as in the statement of the theorem. Let $u\in C(\R;H^{s}(\M))$ be the solution to BO constructed in \cite{IK, Moli2,MP} with $u\vert_{t=0}=u_0$. By the unconditional uniqueness results in \cite{MP-2023}, $u$ is unique solution to BO with this initial data in $C(\R;H^{s}(\M))$. Similarly, let $u_{\dl}$ denote the solution to ILW such that $u_{\dl}\vert_{t=0} =u_{0,\dl}$ constructed in \cite{IS, CLOP}. This time, Theorem~\ref{main-thm} guarantees that this is the unique solution with this initial data in $C(\R;H^{s}(\M))$ for each fixed $0<\dl<\infty$. We then apply the conditional deep-water limit result in \cite[Theorem 1.2]{CLOP} to obtain the convergence of $u_{\dl}$ to $u$ in $C(\R;H^{s}(\M))$.

We now briefly discuss the strategy of the proof of Theorem~\ref{main-thm}. 
We first consider solutions to \eqref{eq-v-intro}, utilising the idea of viewing ILW as a perturbation of the BO equation. The general strategy is to then adapt the approach of Mo\c{s}incat-Pilod~\cite{MP-2023} on unconditional uniqueness for BO (essentially, \eqref{eq-v-intro} with $\dl=\infty$), which itself is inspired by the argument of Kishimoto~\cite{Kishimoto}. The goal is to compare two solutions $u$ and $u^{\dag}$ to ILW with the same initial data.

First, we use (a very slightly modified form of) the gauge transform for BO introduced by Tao \cite{TAO04}, and in the form by Burq-Planchon~\cite{BP}. Namely, we pass from $u,u^{\dag}$ to their gauged variants $w, w^{\dag}$, respectively, defined in \eqref{w} (see also \eqref{Gal}), which satisfy \eqref{eq-w} on $\R$. See Section~\ref{SEC:T} for the slightly different periodic setting. This equation is essentially the same as that for the gauged BO equation except for the addition of the new term $i\dx \Pbhip[ e^{iF} \Qdl v]$. It is not difficult to obtain control on the difference $u-u^{\dag}$ in terms of the difference $w-w^{\dag}$; see Lemma~\ref{v<w+v}. 

The main difficulty is obtaining the reverse: control on $w-w^{\dag}$ by $u-u^{\dag}$ with small constants; see Proposition~\ref{PROP:wtov}.
The proof of this result is responsible for the regularity restriction and is based on two normal form reductions (i.e. integration-by-parts in time), following those in \cite{MP-2023}. We point out that the normal form method has been a powerful approach to showing unconditional uniqueness; see for example \cite{BIT, GuoKO, K19}. The key idea in \cite{MP-2023} to go beyond the restriction $s>\frac 16$ is the addition of the refined Strichartz estimates which allows one to exploit the presence of a time integral for better integrability in space, see Lemma~\ref{re-Stri}, and Lemma~\ref{LEM:StrT}.

Whilst we follow the approach in \cite{MP-2023}, there are a few key differences. Firstly, we need to deal with the extra terms on the far right-hand side of \eqref{eq-v-intro} in view of the finite depth parameter $0<\dl<\infty$. This requires $\Qdl$ to be sufficiently smoothing; see Remark~\ref{RMK:twodepth}. Secondly, when $\M=\R$, there are new issues, not present in \cite{MP-2023}, in justifying the normal form reductions. 
In the usual approach, one applies the identity $e^{i\phi t}=( i \phi)^{-1}\frac{d}{dt} e^{i\phi t} $, where we assume that $\phi=\phi(\xi,\xi_1,\xi_2)$ does not vanish and $|\phi|$ is bounded from below, Fubini's theorem, and integration-by-parts to show that
\begin{align}
\begin{split}
\int_{0}^{t} \intt_{\xi=\xi_1+\xi_2} e^{i \phi t'} f(t',\xi_1)g(t',\xi_2)d\xi_1 dt'  =& \intt_{\xi=\xi_1+\xi_2} \frac{1}{i \phi} e^{i \phi t} f(t',\xi_1)g(t',\xi_2)d\xi_1 dt'  \bigg\vert_{t'=0}^{t'=t} \\
& -\int_{0}^{t} \intt_{\xi=\xi_1+\xi_2} \frac{1}{i\phi} e^{i \phi t'} \dt \big[f(t',\xi_1)g(t',\xi_2)\big]d\xi_1 dt'  
\end{split} \label{general}
\end{align}
for $f,g\in C^{1}(\R; L^2 (\R))$. Now, the $C^1$ assumption on both $f$ and $g$ allows us to use the product rule in time to distribute $\dt$ as 
\begin{align}
\dt \big[f(t,\xi_1)g(t,\xi_2)\big] = (\dt f)(t,\xi_1)g(t,\xi_2) + f(t,\xi_1)(\dt g)(t,\xi_2). \label{product}
\end{align}
In our setting, we need to apply the above general computation with $f=\ft w$, where $w$ is the gauged solution in \eqref{w} with spatial Fourier transform $\ft w$.
However, the extra term $i\dx \Pbhip [e^{iF} \Qdl v]$ in \eqref{eq-w} prevents us from showing $\ft w(t,\xi)$ is continuously differentiable in time for each fixed frequency $\xi$; we refer to Remark~\ref{RMK:wC1} for further details. Consequently, we can no longer justify the product rule as in \eqref{product} as in the usual approach above. 
Inspired by \cite[Appendix A]{K19}, we proceed more carefully 
splitting the gauge variable into a good part (which is $C^1$-in-time) and a bad part (which is not). 
For the contribution due to the bad part, we prove the analogue of the identity \eqref{general} directly rather than through the integration-by-parts in-time followed by the product rule approach as above.  As we do two rounds of the normal form reductions, we need this justification twice, first in Lemma~\ref{IBP-1} and secondly in Lemma~\ref{IBP-2}.

As in Mo\c{s}incat-Pilod \cite{MP-2023},
the arguments in this paper also imply a nonlinear
smoothing estimates for the gauged variable $w$.

\begin{corollary}\label{main-cor}
Let $s>s_0$ and $0<\delta<\infty.$ Let $u_0\in H^{s}(\M)$ and $u$ be the unique solution to ILW \eqref{ILW} with $u\vert_{t=0}=u_0$. Let $w$ denote the corresponding gauged variable defined by \eqref{w} and solving \eqref{eq-w} when $\M=\R$, or defined by \eqref{wperiodic} and solving \eqref{eq-w-T-m} when $\M=\T$.
In the periodic setting, we further assume $\int_{\T} u_0 dx = 0$.
Then, there exists $\eps>0$ and $C:\R_{+}^{3}\to\R_{+}$ such that for any $T>0$, it holds that
\begin{align*}
\| w(t)- e^{-it\dx^2}w_0\|_{C([0,T]; H^{s+\eps})} &\leq C(T, \dl, \|u_0\|_{H^s}) \quad \textup{if} \quad \M=\R, \\
\| w(t)- e^{-it(\dx^2+m_0)}w_0\|_{C([0,T];H^{s+\eps})} &\leq C(T, \dl, \|u_0\|_{H^s}) \quad \textup{if} \quad \M=\T,
\end{align*}
where $m_0 = \frac{1}{2\pi} \int_{\T} u_0^{2}(x)dx$.
\end{corollary}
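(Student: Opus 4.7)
The plan is to observe that the two normal form reductions used in the proof of Theorem~\ref{main-thm}, when applied directly to $w$ rather than to a difference $w-w^{\dag}$ of two solutions, already produce a representation of $w(t)$ whose nonlinear piece lies in $H^{s+\eps}$. The corollary is then essentially a byproduct of the estimates leading to Proposition~\ref{PROP:wtov}, with the $\eps$-gain coming from the factor $\phi^{-1}$ that appears after each integration-by-parts in time.

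First, I would write the Duhamel formula associated with \eqref{eq-w} on $\R$ (and analogously for \eqref{eq-w-T-m} on $\T$, where the constant $m_0$ is absorbed into the linear flow $e^{-it(\dx^2 + m_0)}$):
\begin{equation*}
w(t) - e^{-it\dx^2} w_0 = \int_0^t e^{-i(t-t')\dx^2}\,\mathcal N(u(t'),w(t'))\, dt',
\end{equation*}
where $\mathcal N$ collects the cubic gauge nonlinearity together with the finite-depth contribution $i\dx \Pbhip[e^{iF}\Qdl v]$ inherited from \eqref{eq-v-intro}. The latter piece is already $\eps$-smoother than $w$ itself thanks to the smoothing property of $\Qdl$ in Lemma~\ref{Q-est}, so it would be estimated in $C([0,T];H^{s+\eps})$ directly, using the a priori bound $\|u\|_{L^\infty_T H^s} \lesssim C(T,\dl,\|u_0\|_{H^s})$ guaranteed by Theorem~\ref{main-thm}.

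The genuinely cubic part of $\mathcal N$ would then be handled by two successive normal form reductions, following Lemmas~\ref{IBP-1} and~\ref{IBP-2}. Each integration-by-parts in time exchanges a factor $e^{it'\phi}$ for $(i\phi)^{-1}\frac{d}{dt'}e^{it'\phi}$, producing (i) boundary terms at $t'=0$ and $t'=t$ carrying a factor of $\phi^{-1}$, and (ii) a new time integral with $\phi^{-1}$ multiplying the time derivatives of the remaining Fourier profiles. On the relevant non-resonant region the phase $\phi$ admits a quantitative lower bound, which by interpolation yields a gain of $|\xi|^{-\eps}$; substituting \eqref{eq-w} back for each $\dt w$-factor produced gives a finite list of quintilinear and septilinear expressions. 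Each of these would be estimated by the same multilinear tools used in Proposition~\ref{PROP:wtov}—including the refined Strichartz estimates (Lemmas~\ref{re-Stri} and~\ref{LEM:StrT})—whose proofs already leave a small amount of derivative room to be traded for the desired $\eps$-gain while remaining inside the range $s > s_0$.

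The hard part will be the technical issue flagged in Remark~\ref{RMK:wC1}: on $\R$, $\ft w(t,\xi)$ need not be $C^1$ in time pointwise in $\xi$ because of the contribution of $i\dx \Pbhip[e^{iF}\Qdl v]$, which prevents a naive product-rule application in the integration-by-parts step. For this, I would reuse the good-part/bad-part splitting of the gauged variable from Lemmas~\ref{IBP-1} and~\ref{IBP-2}, verifying the analogue of the identity \eqref{general} directly for the bad component. On $\T$, the analogous reductions apply to \eqref{eq-w-T-m} after noting that the shift $e^{-it m_0}$ in the linear part already absorbs the resonant (zero-phase) contribution, leaving only non-resonant terms on which the same two normal form reductions succeed. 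Collecting the boundary and integral terms then yields $\|w(t) - e^{-it\dx^2} w_0\|_{H^{s+\eps}} \leq C(T,\dl,\|u_0\|_{H^s})$ on $\R$, and the analogous bound with $e^{-it(\dx^2+m_0)}w_0$ on $\T$, uniformly in $t \in [0,T]$.
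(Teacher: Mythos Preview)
Your proposal is correct and follows essentially the same approach as the paper: the paper simply notes that the corollary follows from the smoothing estimates in Section~\ref{NF} (Lemmas~\ref{N-1<M}, \ref{bd-est-1}, \ref{N-2<M}, \ref{cor}, and \eqref{ER-2}--\eqref{dlerror}, all of which are already stated with an $H^{s+\theta}$ norm on the left-hand side), the resulting normal form equation for $\tw$ displayed in the proof of Proposition~\ref{PROP:wtov}, and the unconditional uniqueness of Theorem~\ref{main-thm}, which identifies $u$ with the solution constructed in \cite{IS, CLOP} and thereby supplies the global a priori bound on $\|u\|_{L^\infty_T H^s}$. One small clarification: on $\T$ the shift by $m_0$ does not come from a ``resonant contribution'' in the phase but from the extra term $-i\P_0(v^2)w$ in \eqref{eq-w-T-m}, removed by the further gauge $z=e^{i\gamma}w$; since the unique solution conserves $\|v\|_{L^2}$, one has $\gamma(t)=m_0 t$ and the stated form follows.
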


The proof of Corollary~\ref{main-cor} follows from the smoothing estimates in Section~\ref{NF}, the resulting normal form equation for the gauged variable $w$ in \eqref{w}, and the uniqueness result of Theorem~\ref{main-thm} which allows us to use the global-in-time a priori bound on the $H^s$-norm of
solutions to ILW in \cite{IS, CLOP}. We refer the reader to the proof of
 \cite[Corollary 1.2]{MP-2023} for more details.
 
 \begin{remark}\rm \label{RMK:conv}
 As we can make sense of the nonlinearity in both \eqref{ILW} and \eqref{BO} distributionally when the solutions belong to $L^2(\M)$ in space, we expect that the unconditional deep-water limit should also occur down to $L^2(\M)$. The regularity restriction $s>s_0$ in Theorem~\ref{THM:conv} is due to the restriction in Theorem~\ref{main-thm}. Thus, any improvement on the restriction of Theorem~\ref{main-thm} yields an improvement in that of Theorem~\ref{THM:conv}. 
In particular, it seems possible that further normal form reductions may lower the threshold $s_0$ closer to $s=0$.
\end{remark}

 \begin{remark}\rm \label{RMK:twodepth}
 The approach of viewing ILW as a perturbation of BO is quite robust in the sense that we typically do not need to assume that the operator $\Qdl$ is infinitely smoothing. Indeed, in this paper, we only need smoothing of order $\frac 32+\eps$, for any $\eps>0$. Consequently, Theorem~\ref{main-thm} and Theorem~\ref{THM:conv} extend to 
 solutions of the ILW equation with two depth parameters, introduced in \cite{KKD}:
 \begin{align*}
 \dt u- c_1\mathcal G_{\dl_1} \dx^{2}  u-c_2\mathcal G_{\dl_2} \dx^{2}  u =\dx(u^2),
\end{align*}
 where $c_1,c_2>0$ and $0<\dl_1,\dl_2<\infty$.
  See also \cite[Remark 1.3]{CFLOP-2}.
 \end{remark}

  \begin{remark}\rm \label{RMK:shallow}
The convergence in the shallow-water setting ($\dl \to0$) requires a rescaling procedure, as introduced in \cite{ABFS}. 
Given a solution $u\in C(\R;L^2(\M))$ to ILW \eqref{ILW}, the rescaled function $\wt{u}(t,x)  =\tfrac{1}{\dl} u(\tfrac{1}{\dl}t, x)$
solves the scaled ILW equation (sILW)
\begin{align}
 \dt \wt{u}-\tfrac{1}{\dl} \mathcal G_\dl \dx^{2}  \wt{u} =\dx(\wt{u}^2). \label{sILW}
\end{align}
Note that $u$ solves ILW \eqref{ILW} with initial data $\dl u_0$ if and only if $\wt{u}$ solves sILW \eqref{sILW} with initial data $u_0$. In particular, Theorem~\ref{main-thm} implies that the sILW equation \eqref{sILW} is unconditionally globally well-posed in $C(\R;H^{s}(\M))$ for $s>s_0$.

The shallow water limit to KdV is then taken with respect to the Cauchy problem for sILW, which has been rigorously proved in $H^{s}(\M)$ for $s>\frac{1}{2}$ \cite{ABFS, Gli}. Moreover, as KdV \eqref{KdV} is unconditionally globally well-posed in $C(\R; H^{s}(\M))$ for any $s\geq 0$ and on both $\M=\R$ and $\T$ \cite{Zhou, BIT}, the shallow-water limit for $s>\frac 12$ occurs unconditionally. Given any improvement in the regularity restriction for the shallow-water limit, our unconditional uniqueness result for sILW would immediately yield an improvement in the regularity restriction for the unconditional shallow-water limit.
 \end{remark}

Finally, we give an overview of the rest of this paper. In Section~\ref{SEC:Prelim}, we discuss the BO gauge transformation on $\R$ and how to adapt it to ILW (after a Galilean transform). We derive the gauged ILW equation and control certain harmless parts of its nonlinearity. We then discuss the refined Strichartz estimates adapted for ILW. In Section~\ref{NF}, we begin normal form reductions on the gauged equation, with the goal of proving Proposition~\ref{PROP:wtov} which controls the difference of gauged solutions. Lastly, in Section~\ref{SEC:T} we discuss the modifications to deal with the periodic setting.

\section{Gauge transform and Strichartz estimates}\label{SEC:Prelim}

\subsection{Notation and preliminary estimates}

We use $A\les B$ to denote $A\leq C B$ for some constant $C>0$. 
We denote the Fourier transform by
$\F f(\xi) = \ft f(\xi) = \int_{\M} e^{-ix\xi} f(x) dx$, for $\xi\in\ft \M$,
and endowed with the Lebesgue measure if $\M=\R$ or counting measure if $\M=\T$.
For $s\in \R$, we denote the Sobolev space 
$H^s(\M)$ with the norm
\[
\|f\|_{H^s (\M)}
= \|J^s f\|_{L^2}
=\| \jb{\xi}^{s} \ft f(\xi)\|_{L^{2}_{\xi}(\ft \M)},
\]
where $\F J^s f= \jb{\xi}^s \ft f(\xi)$, with $\jb{\xi}=(1+\xi^2)^\frac12.$

Let $\psi:\R\to [0,1]$ be a smooth function  with support in $[-2,2]$ and equal to $1$ on $[-1,1]$.
Define $\psi_N(\xi)=\psi(\frac{\xi}{N})-\psi(\frac{2\xi}{N})$. 
For $N\in 2^{\Z}$, we denote 
 the Littlewood Paley projectors by
 \begin{align*}
  \F \,(\P_{\leq N} f)(\xi)  = \psi(\tfrac\xi{N})\ft f(\xi), \quad  \F\, \P_N f = \F \,\P_{\leq N} f -\F\, 
\P_{\leq \frac{N}{2}} f =\psi_N \ft f,
 \end{align*}
 and $\P_{>N}:= 1-\P_{\leq N}$. We then set
\begin{align}
\begin{split}
 \F\,( \P_{+} f)(\xi) = \ind_{\xi> 0} \ft f(\xi), &\quad   \F\, ( \P_{-} f)(\xi)  = \ind_{\xi< 0} \ft f(\xi),\\
  \Pblo = \P_{\leq 1}, \quad \PbLO =P_{\leq 2} \quad  \Pbhi &=1-\Pblo, \quad \Pbhip= \P_{+} \Pbhi.
  \end{split} 
  \label{projs}
\end{align}
 We also recall the frequency localised Sobolev embedding 
\begin{align*}
\| \P_N f\|_{L^{\infty} (\M)} \les N^{\frac 12}\|\P_{N}f\|_{L^2 (\M)}. 
\end{align*}



\begin{lemma}[Fractional Leibniz rule]
\label{prod-1}
Let $\M=\R$ or $\T$, $s>0$ and $1<p_j,q_j,r\leq \infty$, $j=1,2$,
such that $\frac1r=\frac{1}{p_j}+\frac{1}{q_j}$. 
Then we have
\begin{align*}
  \| J^s(fg)\|_{L^r (\M)} \les \|J^s f\|_{L^{p_1}(\M)} \|g\|_{L^{q_1}(\M)}  + \|f\|_{L^{p_2}(\M)} \| J^s  g\|_{L^{q_2}(\M)}.
\end{align*}
\end{lemma}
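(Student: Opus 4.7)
The plan is to prove this via a Littlewood--Paley / Bony paraproduct decomposition, which is a standard route for Kato--Ponce type estimates. First I would decompose
\begin{align*}
 fg &= \Pi_{\text{hl}}(f,g) + \Pi_{\text{lh}}(f,g) + \Pi_{\text{hh}}(f,g) \\
 &:= \sum_{N} \P_N f \cdot \P_{\leq N/8} g + \sum_N \P_{\leq N/8}f \cdot \P_N g + \sum_{N \sim M} \P_N f \cdot \P_M g,
\end{align*}
where the sums run over $N, M \in 2^{\Z}$ (respectively $N, M \in 2^{\N_0}$ in the periodic case, absorbing the low-frequency piece into the low-low interaction). By symmetry it suffices to handle $\Pi_{\text{hl}}$ and $\Pi_{\text{hh}}$, yielding the two terms on the right-hand side.

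For $\Pi_{\text{hl}}(f,g)$, each summand has spatial Fourier support in an annulus of size $\sim N$, so $J^s$ acts essentially as $N^s$. The plan is to bound
\[
\|J^s \Pi_{\text{hl}}(f,g)\|_{L^r}
\lesssim \Big\| \Big(\sum_N |J^s \P_N f|^2 \Big)^{1/2} \sup_N |\P_{\leq N/8} g| \Big\|_{L^r}
\lesssim \|J^s f\|_{L^{p_1}} \|g\|_{L^{q_1}}
\]
by combining the Littlewood--Paley square function theorem on $L^{p_1}$ (using $1 < p_1 \leq \infty$, with the BMO variant if $p_1 = \infty$), Hölder with exponents $\frac{1}{r} = \frac{1}{p_1} + \frac{1}{q_1}$, and the Hardy--Littlewood maximal inequality on $L^{q_1}$ (trivial if $q_1 = \infty$). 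The symmetric estimate for $\Pi_{\text{lh}}(f,g)$ produces the second term.

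The main obstacle is $\Pi_{\text{hh}}(f,g)$, since the frequency support of $\P_N f \cdot \P_M g$ with $N \sim M$ may lie anywhere in $\{|\xi| \lesssim N\}$, so we cannot simply convert $J^s$ into a frequency weight on a single factor. The plan is to write $J^s(\P_N f \cdot \P_M g) = J^s \P_{\leq 4N}(\P_N f \cdot \P_M g)$, move $J^s \P_{\leq 4N}$ inside via its convolution kernel (bounded in $L^1$ uniformly in $N$ up to the factor $N^s$), and then split $N^s \lesssim N^s + M^s$ to place $J^s$ on either factor. Summing in $N \sim M$ via Cauchy--Schwarz and applying the square function theorem as above yields the desired bound, again distributing into the two terms on the right-hand side. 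On $\T$ the decomposition is identical with discrete Littlewood--Paley, and the only modification is to treat the zero frequency separately, which contributes trivially under the hypothesis $s > 0$. This is the classical Kato--Ponce inequality, and a full account can be found, e.g., in Grafakos--Oh or Muscalu--Schlag; I would cite it rather than reproduce the argument in detail.
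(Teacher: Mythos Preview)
The paper does not prove this lemma at all; it simply cites \cite{CW,GO,BL} for $\M=\R$ and \cite{IK2,GKO,BOZ} for $\M=\T$, and your proposal ultimately does the same, so the two treatments agree. Your paraproduct sketch is indeed the standard route taken in those references (notably Grafakos--Oh, which you mention), though note that the square-function argument you outline is only clean for $1<r,p_j<\infty$; the endpoint cases $r=\infty$ or $p_j=\infty$ permitted by the stated range are genuinely delicate and are exactly what \cite{BL,GO} handle, so the citation is doing real work there.
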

\noi
For a proof of Lemma~\ref{prod-1}, see \cite{CW,GO,BL} on $\R$ and 
\cite{IK2,GKO,BOZ} for $\T$.
Moreover, we recall the following lemma for product estimates from \cite[Lemma 2.7]{MP} and \cite[Lemma 3.3]{CLOP}.

\begin{lemma}\label{prod-2}
Let $\M=\R$ or $\T$, $2\leq q <\infty$, and $0\leq s\leq \frac12$.
Assume $F_1, F_2$ are two real-valued functions such that 
$\dx F_j  =f_j$ for $j=1,2$, and $J^{s}g\in L^{q}$. Then
\begin{align}
\|J^s (e^{iF_1} g)\|_{L^q }
&\les 
(1+\|f_1\|_{L^2}) \|J^s g\|_{L^q}, \label{JS1} \\
\|J^s \big((e^{iF_1}-e^{iF_2}) g\big)\|_{L^q}
&\les 
\big( \|f_1-f_2\|_{L^2} +\|e^{iF_1}-e^{iF_2}\|_{L^\infty }(1+\|f_1\|_{L^2})
\big)
\|J^s g\|_{L^q}. \label{JS2}
\end{align}
\end{lemma}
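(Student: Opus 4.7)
The plan is to combine the fractional Leibniz rule of Lemma~\ref{prod-1} with a Littlewood--Paley / paraproduct analysis anchored in the structural identity
\begin{align*}
\dx(e^{iF_j}) = i f_j e^{iF_j}, \qquad j = 1, 2,
\end{align*}
together with the trivial pointwise bound $|e^{iF_j}| = 1$. Since $\dx^{-1} \P_N$ has $L^p \to L^p$ operator norm $O(N^{-1})$ for $N \gtrsim 1$, this identity yields the key smoothing estimate
\begin{align*}
\|\P_N(e^{iF_1})\|_{L^p} \les N^{-1}\|f_1\|_{L^p}, \qquad 1 \leq p \leq \infty,
\end{align*}
and in particular $\|\P_N(e^{iF_1})\|_{L^\infty} \les N^{-1/2}\|f_1\|_{L^2}$ by Bernstein's inequality. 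This one-dimensional gain is what drives the whole argument.

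For \eqref{JS1} I would perform the paraproduct decomposition $e^{iF_1} g = \sum_{N_1, N_2} \P_{N_1}(e^{iF_1})\, \P_{N_2}(g)$ and split into the low-high ($N_1 \ll N_2$), high-low ($N_1 \gg N_2$), and high-high ($N_1 \sim N_2$) regimes. In the low-high regime the output frequency is $\sim N_2$, so $J^s$ contributes $N_2^s$; summing over $N_1$ using $\|\P_{\lesssim N_2}(e^{iF_1})\|_{L^\infty} \leq 1$ together with the Littlewood--Paley square-function equivalence for $L^q$ with $1 < q < \infty$ produces $\|J^s g\|_{L^q}$. In the high-low regime the output frequency is $\sim N_1$, so $J^s$ contributes $N_1^s$, which is absorbed by the smoothing bound $\|\P_{N_1}(e^{iF_1})\|_{L^\infty} \les N_1^{-1/2}\|f_1\|_{L^2}$; Littlewood--Paley orthogonality combined with Minkowski (valid since $q \geq 2$) reduces the contribution to a multiple of $\|f_1\|_{L^2}\|g\|_{L^q}\bigl(\sum_{N_1 > 1} N_1^{2s - 1}\bigr)^{1/2}$, which converges for $s < \tfrac{1}{2}$. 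The high-high case is handled analogously. Combined with $\|g\|_{L^q} \leq \|J^s g\|_{L^q}$ for $s \geq 0$, these three bounds assemble into \eqref{JS1}.

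For \eqref{JS2} I would rerun the argument with $h := e^{iF_1} - e^{iF_2}$ in place of $e^{iF_1}$. The analogue of the structural identity reads
\begin{align*}
\dx h = i f_1 h + i (f_1 - f_2)\, e^{iF_2},
\end{align*}
producing $\|\P_N h\|_{L^p} \les N^{-1}\bigl(\|f_1\|_{L^p}\|h\|_{L^\infty} + \|f_1 - f_2\|_{L^p}\bigr)$. Substituting this into the paraproduct decomposition generates exactly the two terms on the right-hand side of \eqref{JS2}: the first piece feeds $\|f_1 - f_2\|_{L^2}$ directly, while the second piece carries the factor $\|h\|_{L^\infty} = \|e^{iF_1} - e^{iF_2}\|_{L^\infty}$ multiplying the $(1 + \|f_1\|_{L^2})$ contribution inherited from the proof of \eqref{JS1}.

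The main obstacle is the high-low paraproduct region, which drives the restriction $s \leq \tfrac{1}{2}$: this is the critical one-dimensional Sobolev threshold at which the dyadic sum $\sum N^{2s - 1}$ starts to diverge, and the endpoint $s = \tfrac{1}{2}$ requires a sharper Besov or refined square-function argument. On $\T$ the analysis is identical once the zero mode of $F_j$ (on which $\dx F_j$ yields no information) is split off and handled by direct $L^\infty$ bounds.
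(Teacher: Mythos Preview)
The paper does not prove this lemma itself; it is recalled from \cite[Lemma~2.7]{MP} and \cite[Lemma~3.3]{CLOP}. Your paraproduct sketch is exactly the standard argument behind those references: the identity $\dx(e^{iF})=if\,e^{iF}$, the resulting smoothing $\|\P_N(e^{iF_1})\|_{L^\infty}\les N^{-1/2}\|f_1\|_{L^2}$ for $N\ges 1$, the low-high/high-low/high-high split, and for \eqref{JS2} the companion identity $\dx h=if_1 h+i(f_1-f_2)e^{iF_2}$ are all correct. Two minor remarks: your displayed general-$L^p$ smoothing bound is only licensed for $p=2$ under the stated hypothesis $f_1\in L^2$ (which is all you actually use after Bernstein), and the endpoint $s=\tfrac12$, which you rightly flag, does require more than the naive dyadic sum $\sum_{N_1} N_1^{2s-1}$.
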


%
%

\subsection{Gauge transformation on $\R$} \label{SEC:GaugeR}

In this section, we slightly modify the gauge transformation for BO at low-regularity on the line from \cite{TAO04, BP} to deal with the ILW equation \eqref{ILW}. The gauge transformation on $\T$ is simpler to construct due to the absence of very low frequencies and we discuss it later in Section~\ref{SEC:T}.

Fix $0<\dl<\infty$. Let $s\geq 0$ and $u\in C([0,T]; H^s(\R))=C_{T}H^{s}(\R)$ be a distributional solution to \eqref{ILW}. 
In order to remove the transport term $\dl^{-1}\dx$, we consider the Galilean transformation 
\begin{align}
v(t,x)=u(t,x+\delta^{-1} t). \label{Gal}
\end{align} 
Then, 
we have $v(t,x)\in C([0,T]; H^s (\R))$ and $v$ is a distributional solution to 
\begin{align}
\begin{cases}
\dt v-\mathcal H \dx^2 v=   \dx(v^2)+\Qdl \dx v, \\
v\vert_{t=0}  =u_0,
\end{cases}
\label{eq-v}
\end{align}
where 
$\Qdl=(\mathcal T_\dl- \mathcal H)\dx $.
We note here that $\Qdl$ is a smoothing operator. See \cite{IS, CLOP, CFLOP-2}.
\begin{lemma}\label{Q-est} 
Let $\M= \R$ or $\T$ and $0<\dl<\infty$. 
For any $s\geq 0$, we have 
\begin{align*}
 \dl   \| \Qdl f \|_{H^{s}(\M)}+ \dl^{2}   \| \Qdl \dx f \|_{H^{s}(\M)} \les  (1+\delta^{-s}) \|f\|_{L^2 (\M)}.
\end{align*}
\end{lemma}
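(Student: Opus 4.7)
Both inequalities are Fourier multiplier estimates, so my plan is to compute the symbol of $\mathcal{Q}_\delta$ explicitly, verify pointwise bounds after splitting frequencies at the natural scale $\delta|\xi|\sim 1$, and then invoke Plancherel.

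A direct computation using the identity $\coth(x)-1=\tfrac{2}{e^{2x}-1}$ for $x>0$ and the oddness of $\coth$ and $\operatorname{sgn}$ shows that the symbol
$$p(\xi) \;=\; -i\bigl(\coth(\delta\xi)-\operatorname{sgn}(\xi)\bigr)$$
of $\mathcal{Q}_\delta = \mathcal{T}_\delta - \mathcal{H}$ satisfies
$$|p(\xi)| \;=\; \frac{2}{e^{2\delta|\xi|}-1}, \qquad \xi\neq 0.$$
So by Plancherel the lemma reduces to showing that $\delta\langle\xi\rangle^{s}|p(\xi)|$ and $\delta^{2}|\xi|\langle\xi\rangle^{s}|p(\xi)|$ are each controlled by $C(1+\delta^{-s})$, which I will verify in two regimes.

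In the \emph{low-frequency regime} $y:=\delta|\xi|\leq 1$, the elementary convexity bound $e^{y}-1\geq y$ gives
$$\delta|p(\xi)| \;\leq\; |\xi|^{-1}, \qquad \delta^{2}|\xi p(\xi)| \;\leq\; \delta.$$
Since $\langle\xi\rangle\leq 1+\delta^{-1}\lesssim \max(1,\delta^{-1})$ in this regime, $\langle\xi\rangle^{s}\lesssim 1+\delta^{-s}$, and the pointwise bounds on both multipliers by $C(1+\delta^{-s})$ follow after noting that the $\partial_x$ factor appearing in the second term of the estimate supplies precisely the extra power of $|\xi|$ needed to cancel the $|\xi|^{-1}$ in the bound on $\delta|p(\xi)|$. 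In the \emph{high-frequency regime} $y>1$, the complementary bound $e^{y}-1\geq \tfrac12 e^{y}$ yields the exponential decay $|p(\xi)|\lesssim e^{-2\delta|\xi|}$. After the change of variables $y=\delta|\xi|\geq 1$, any polynomial weight $|\xi|^{\alpha}$ becomes $y^{\alpha}\delta^{-\alpha}$, and since $y^{\alpha}e^{-2y}$ is bounded on $[1,\infty)$ for every $\alpha\geq 0$, all powers of $|\xi|$ are absorbed into negative powers of $\delta$, producing decay much stronger than the required $1+\delta^{-s}$.

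Assembling the two regimes gives the pointwise multiplier estimates, and Plancherel's theorem finishes the proof. The only technical point of the argument is the behavior of $|p(\xi)|$ near $\xi=0$, where the $1/(\delta|\xi|)$ asymptotics of the symbol is precisely neutralised by the derivative $\partial_x$ in the second term of the estimate, and by the $\delta^{-s}$ in the right-hand side in the first; the rest is routine arithmetic with $\delta$ powers at the crossover $y\sim 1$.
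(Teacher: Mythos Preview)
The paper does not supply its own proof of this lemma; it defers to the cited references. Your Plancherel/multiplier strategy is the natural one, and your symbol computation $|p(\xi)|=2/(e^{2\delta|\xi|}-1)$ as well as the high-frequency analysis are correct.

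There is, however, a genuine gap in your low-frequency treatment of the \emph{first} term on $\R$. You correctly obtain $\delta|p(\xi)|\le|\xi|^{-1}$ for $\delta|\xi|\le1$, but then assert that this $|\xi|^{-1}$ singularity is ``neutralised by the $\delta^{-s}$ on the right-hand side''. That is not so: $1+\delta^{-s}$ is a constant depending only on $\delta$, while $\langle\xi\rangle^{s}|\xi|^{-1}\to\infty$ as $\xi\to0$. In fact $\delta\langle\xi\rangle^{s}|p(\xi)|$ is genuinely unbounded near the origin on $\R$, so no pointwise bound can yield the first inequality there for arbitrary $f\in L^{2}(\R)$. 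On $\T$ the difficulty evaporates, since nonzero integer frequencies satisfy $|n|\ge1$ (and one takes $p(0)=0$); your argument then closes.

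A smaller gap affects the \emph{second} term when $\delta$ is large. Your low-frequency bound $\delta^{2}|\xi\,p(\xi)|\le\delta$, combined with $\langle\xi\rangle^{s}\lesssim1+\delta^{-s}$, only yields $\delta(1+\delta^{-s})$, which exceeds the target $1+\delta^{-s}$ once $\delta>1$. Indeed $|\xi\,p(\xi)|\to\delta^{-1}$ as $\xi\to0$ on $\R$, so the $\delta^{2}$ prefactor cannot be correct there. Again this issue is absent on $\T$, because for $\delta>1$ the set $\{n\in\Z:0<|n|\le\delta^{-1}\}$ is empty and all nonzero modes lie in the exponentially decaying regime.

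In summary: on $\T$ your proof is essentially complete, while on $\R$ the gaps you encounter are not repairable within the lemma as stated---they reflect inaccuracies in the claimed $\delta$-dependence for $\M=\R$ rather than a flaw in your method. The paper's subsequent applications are not affected, since in practice $\mathcal{Q}_{\delta}$ always appears composed with $\partial_{x}$ or with a projection away from zero frequency, and only \emph{some} negative power of $\delta$ is actually required.
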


To construct the gauge transformation following \cite{BP}, we first construct a primitive of $v$. Namely, a function $F=F[v]$ such that $\dx F= v$. Let $\psi \in C_{0}^{\infty}(\R)$ and $\int_{\R} \psi(x) dx=1$.
We set 
\begin{align}
F(t,x):=\int_{\R}
 \psi(y)
  \int_{y}^x v(t,z) dz dy+G(t), \label{def-F}
\end{align}
where 
$$G(t):=  
 \int_{0}^{t} \int _{\R} -\psi'(y)    \H  v(t',y) +  \psi(y)  \Qdl  v(t',y) +    \psi(y) v(t',y)^2  dy dt' . $$
It is clear that $\dx F=v$ and, moreover, from \eqref{eq-v}, $F$ satisfies
%
%
 %
\begin{align} 
  \dt F- \mathcal H \dx^2 F=v^2+\Qdl v. \label{eq-F}
\end{align}
Denote the Benjamin-Ono gauge transform of $F$ by
\begin{align}
  W=\Pbhip (e^{iF}) \quad \text{and} \quad  w=\dx W= \dx \Pbhip(e^{iF})=-i \Pbhip(e^{iF} v). \label{w}
\end{align}
Then, as in \cite[Section 3]{CLOP}, we arrive at the gauged ILW equation:
%
\begin{align}
 \begin{split}
   \dt w+i \dx^2 w & 
    = -2  \dx \Pbhip [(\dx^{-1} w)(  \P_-\dx v)]
   -2\dx \Pbhip [(\Pblo e^{iF}) ( \P_{-} \dx v)]\\
   & \quad + i\dx \Pbhip [e^{iF} \Qdl v]
 \end{split} \label{eq-w}
\end{align}
with $w\vert_{t=0}= \dx \Pbhip[ e^{iF[u_0]}]$.
The first two terms on the right-hand side of \eqref{eq-w} are the usual terms appearing in the gauged BO equation, see \cite[(2.11)]{MP}. The third term is the additional term arising from viewing the ILW equation as a perturbation of the BO equation. The smoothing property of $\Qdl$ in Lemma~\ref{Q-est} ensures that this term is mostly harmless, like the second term on the right-hand side of \eqref{eq-w}.
Indeed, we have the following.

\begin{lemma}\label{est-error}
Let $\M=\R$ or $\T$, $0<\dl<\infty$, $\sigma_1 \geq 0 $, and $0\leq \sigma_2< \frac12$.
Define 
\begin{align}
E_1(f,g)= -2\dx \Pbhip [(\Pblo f) ( \P_{-} \dx g)] \quad \text{ and } \quad 
E_2(f,g)=  i\dx \Pbhip [f \Qdl g]. \label{E1E2}
\end{align}
Assume that $F_1,F_2$ are two real-valued functions such that $\dx F_j =f_j$, $j=1,2$.
 Then,
  \begin{align}
  \begin{split}
\| E_1(e^{i F_1},g_1) -E_1(e^{iF_2},g_2) \|_{H^{\sigma_1}} \\
 &\hspace{-5cm}\les [(1+\|f_1\|_{L^2})\|F_1-F_2\|_{L^{\infty}}+\|f_1-f_2\|_{L^2}] \|g_1\|_{L^2}+(1+\|f_2\|_{L^2})\|g_1-g_2\|_{L^2},
\label{error-1} 
\end{split}\\
\begin{split}
     \| E_2(e^{iF_1},g_1) -E_2(e^{iF_2},g_2) \|_{H^{\sigma_2}} \\
& \hspace{-5cm} \les \dl^{-1}(1+\dl^{-1-\s_2}) 
 (1+\|f_1\|_{H^{\s_2}}+\|f_2\|_{H^{\s_2}})^2 \\
 & \hspace{-2cm} \times \big[ \|g_1\|_{L^2}( \|f_1-f_2\|_{H^{\s_2}}+\|F_1-F_2\|_{L^{\infty}})+ \|g_1-g_2\|_{L^{2}} \big].
  \end{split}
\label{error-2}
\end{align}
\end{lemma}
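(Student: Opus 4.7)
For both estimates, I would bilinearly split the difference as
\begin{align*}
E_j(e^{iF_1}, g_1) - E_j(e^{iF_2}, g_2) = E_j(e^{iF_1} - e^{iF_2}, g_1) + E_j(e^{iF_2}, g_1 - g_2), \quad j = 1, 2,
\end{align*}
and control each piece separately. The arguments for $E_1$ and $E_2$ are then quite different: the first exploits the narrow Fourier support of $E_1$ while the second leverages the smoothing property of $\Qdl$ from Lemma~\ref{Q-est}.

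\textbf{Estimate \eqref{error-1} for $E_1$.} I would first observe that the Fourier support of $E_1(f, g)$ is confined to a bounded interval. Writing the Fourier transform of $E_1(f,g)$ at $\xi = \xi_1 + \xi_2$, the constraints $\xi > 1$ (from $\Pbhip$), $|\xi_1| \leq 2$ (from $\Pblo$), and $\xi_2 < 0$ (from $\P_-$) together force $\xi \in (1, 2]$ and $\xi_2 \in [-1, 0)$. Consequently, $\|E_1(f, g)\|_{H^{\sigma_1}}$ is equivalent to $\|E_1(f, g)\|_{L^2}$ and both the outer $\dx$ and the inner $\dx$ on $g$ are harmless $O(1)$ Fourier multipliers. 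A direct H\"older $L^{\infty} \times L^2$ bound together with the Lipschitz estimate $\|e^{iF_1} - e^{iF_2}\|_{L^{\infty}} \leq \|F_1 - F_2\|_{L^{\infty}}$ and the boundedness of $\Pblo$ on $L^{\infty}$ (which is convolution with a Schwartz function) yields \eqref{error-1}, with a bound in fact slightly stronger than the stated right-hand side.

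\textbf{Estimate \eqref{error-2} for $E_2$.} I would absorb the outer derivative using $\|\dx \Pbhip h\|_{H^{\sigma_2}} \lesssim \|h\|_{H^{\sigma_2+1}}$, reducing to an $H^{\sigma_2+1}$ bound on $e^{iF}\Qdl g$. Since $\sigma_2 + 1 > 1/2$, I cannot directly invoke Lemma~\ref{prod-2}; instead I would use the equivalence $\|\cdot\|_{H^{\sigma_2+1}} \sim \|\cdot\|_{L^2} + \|\dx \cdot\|_{H^{\sigma_2}}$ together with the chain rule $\dx e^{iF} = if e^{iF}$ to obtain
\begin{align*}
\|e^{iF}\Qdl g\|_{H^{\sigma_2+1}} \lesssim \|\Qdl g\|_{L^2} + \|f e^{iF} \Qdl g\|_{H^{\sigma_2}} + \|e^{iF} \Qdl \dx g\|_{H^{\sigma_2}}.
\end{align*}
The first and third terms are controlled by Lemma~\ref{prod-2}(JS1) together with Lemma~\ref{Q-est}, gaining $\dl^{-2}(1+\dl^{-\sigma_2}) \lesssim \dl^{-1}(1+\dl^{-1-\sigma_2})$ for the latter. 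For the triple product, I would apply Lemma~\ref{prod-1} with two H\"older pairs: $L^2 \times L^{\infty}$, using Sobolev embedding to put $e^{iF}\Qdl g$ in $L^{\infty}$ via $\|\Qdl g\|_{L^{\infty}} \lesssim \|\Qdl g\|_{H^{1/2+\eps}}$ and Lemma~\ref{Q-est}; and $L^p \times L^q$ with $p = 1/(\tfrac12 - \sigma_2)$ and $q = 1/\sigma_2$, placing $f$ via $H^{\sigma_2} \hookrightarrow L^p$ and using Lemma~\ref{prod-2}(JS1) on the other factor. For the difference $E_2(e^{iF_1} - e^{iF_2}, g_1)$, the same architecture applies with Lemma~\ref{prod-2}(JS2) in place of (JS1) to produce the $\|F_1 - F_2\|_{L^{\infty}}$ contribution, together with the chain-rule expansion
\begin{align*}
\dx(e^{iF_1} - e^{iF_2}) = if_1(e^{iF_1} - e^{iF_2}) + i(f_1 - f_2)e^{iF_2}
\end{align*}
which produces the $\|f_1 - f_2\|_{H^{\sigma_2}}$ contribution via the triple-product estimate.

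\textbf{Main obstacle.} The principal technical subtlety is the triple product $\|fe^{iF}\Qdl g\|_{H^{\sigma_2}}$: since $\sigma_2 < 1/2$, we do not have $H^{\sigma_2} \hookrightarrow L^{\infty}$ and so cannot simply place $f = \dx F$ in $L^{\infty}$. The Leibniz split must therefore be carefully calibrated so that the Sobolev embeddings on both factors are admissible, with the half-derivative gap on $e^{iF}\Qdl g$ being absorbed by the $H^{1/2+\eps}$ smoothing of $\Qdl$. All $\dl$-dependencies then track cleanly from Lemma~\ref{Q-est}, yielding the stated $\dl^{-1}(1+\dl^{-1-\sigma_2})$ factor, while the quadratic prefactor $(1+\|f_1\|_{H^{\sigma_2}}+\|f_2\|_{H^{\sigma_2}})^2$ arises from combining a single application of Lemma~\ref{prod-2} for $e^{iF}\cdot$ with the additional $f$-factor introduced by the chain rule.
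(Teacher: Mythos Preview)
Your proposal is correct and follows essentially the same architecture as the paper: the bilinear splitting, the frequency-localization observation for $E_1$, and the chain rule plus fractional Leibniz plus $\Qdl$-smoothing for $E_2$. The only differences are tactical---for $E_1$ you invoke $L^\infty$-boundedness of $\Pblo$ as a Schwartz convolution (yielding a slightly cleaner bound than stated), whereas the paper uses $\Pblo(e^{iF})=e^{iF}-\Pbhi(e^{iF})$ together with \eqref{PhieF}; for $E_2$ the paper first factors out $e^{iF}$ via \eqref{JS1} and then applies Leibniz to the simpler product $f\,\Qdl g$ with $L^2\times L^\infty$ pairs, rather than handling the full triple product $f\cdot(e^{iF}\Qdl g)$ with your mixed $L^p\times L^q$ exponents.
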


Before we present the proof of Lemma~\ref{est-error}, we recall the following from \cite[Remark 2.3]{MP-2023}. For a real-valued continuous function $F$, we always have $e^{iF}\in L^{\infty}(\R)\setminus L^{2}(\R)$. Thus, $e^{iF}$ is merely a tempered distribution and its Fourier transform is understood in the same sense. If $\dx F ~\in~L^2(\R)$, then $\dx(e^{iF})\in L^2(\R)$ and so $\Pbhi(e^{iF})$ is well-defined through
\begin{align}
\Pbhi(e^{iF}) = \dx^{-1} \Pbhi( \dx e^{iF}).  \label{dxexpF}
\end{align}
Note that \eqref{dxexpF} and Bernstein's inequality then give
\begin{align}
\| \Pbhi(e^{iF}) \|_{L^{\infty}_{x}} \les \|\dx F\|_{L^{2}_{x}}. \label{PhieF}
\end{align}
In general, $\Pblo(e^{iF})$ may not be well-defined, so instead, with a slight abuse of notation, we define $\Pblo(e^{iF}):= e^{iF} -\Pbhi(e^{iF})$. It then follows from \eqref{dxexpF} that 
\begin{align} 
\Pblo(e^{iF}) = 1+ \Pblo(e^{iF}-1) + \Pbhi(e^{iF}-1) -\Pbhi(e^{iF}) = 1+ \Pblo(e^{iF}-1), \label{PloeiF}
\end{align}
where $e^{iF}-1 \in L^2(\R)$ and so $\Pblo(e^{iF}-1)$ is well-defined with the frequency projection $\Pblo$ in \eqref{projs}. In particular, since $\P_{+}\P_{-}=0$, 
\begin{align}
E_{1}(e^{iF},g) = E_{1}(e^{iF}-1,g). \label{E11}
\end{align}

\begin{proof}[Proof of Lemma~\ref{est-error}]
We first show \eqref{error-1}. For $f\in L^2$, by frequency considerations, we have 
\begin{align}
E_{1}(f,g) = -2\dx \PbLO \Pbhip[ (\Pblo f)(\PbLO\P_{-}\dx g)]. \label{freqE1}
\end{align}
Then, by Bernstein's inequality and \eqref{PhieF}, we have 
\begin{align*}
\| E_1(e^{i F_1},g_1) -E_1(e^{iF_2},g_2) \|_{H^{\sigma_1}} & \les \|\Pblo(e^{iF_1}-e^{iF_2})\|_{L^{\infty}} \| g_1\|_{L^2} + \| \Pblo(e^{iF_2})\|_{L^{\infty}} \| g_1-g_2\|_{L^2}.
\end{align*}
Now \eqref{error-1} follows from
\begin{align*}
\|\Pblo(e^{iF_1}-e^{iF_2})\|_{L^{\infty}} \les (1+\|\dx F_1\|_{L^2})\|F_1- F_2\|_{L^{\infty}}+\|\dx F_1-\dx F_2\|_{L^2}.
\end{align*}
We move onto proving \eqref{error-2}.
We will only prove the following inequality
\begin{align}
\| E_{2}(e^{iF},g)\|_{H^{\s_2}} \les \dl^{-1}(1+\dl^{-1-\s_2})(1+ \|f\|_{H^{\s_2}})^2 \|g\|_{L^{2}}. \label{error-3}
\end{align}
Then \eqref{error-2} follows from \eqref{error-3} and using similar ideas and \eqref{JS2}. By \eqref{JS1}, we have
\begin{align*}
\| E_{2}(e^{iF},g)\|_{H^{\s_2}} & \les \|e^{iF} f \Qdl g\|_{H^{\s_2}}+\|e^{iF} \Qdl \dx g\|_{H^{\s_2}} \\
& \les (1+\|f\|_{L^2})\big(\| f \Qdl g\|_{H^{\s_2}}+\|\Qdl\dx g\|_{H^{\s_2}}\big).
\end{align*}
For the second term, we use Lemma~\ref{Q-est}, while for the first term, by Lemma~\ref{prod-1}, Sobolev embedding, we have 
\begin{align*}
\|f \Qdl g\|_{H^{\s_2}} &\les \|f\|_{H^{\s_2}}\|\Qdl g\|_{L^{\infty}}+ \|f\|_{L^2} \|\Qdl J^{\s_2}g\|_{L^{\infty}}\les \|f\|_{H^{\s_2}}\|\Qdl J^{\s_2+1}g\|_{L^2}.
\end{align*}
Another use of Lemma~\ref{Q-est} completes the proof of \eqref{error-3}.
\end{proof}



\begin{lemma}
\label{dif-F}
Let $0<\dl<\infty$ and $v,v^\dag\in C_TL^2 (\R)$ be two solutions
to \eqref{eq-v} with initial data $v(0)$ and $v^\dag(0)$, respectively. Assume that $\Pblo v(0) =\Pblo  v^\dag(0).$
 Let $F, F^\dag$ be the spatial primitives of $v, v^\dag$ constructed in \eqref{def-F} and satisfying \eqref{eq-F}, respectively. 
 Then, we have
\begin{align}
\|F(0)-F^\dag(0)\|_{L^\infty} &\les \| v(0)-v^\dag(0)\|_{L^2},\label{dif-F-1}\\
\|F-F^\dag\|_{C_T L^\infty} &\les \jb{T} (\|v\|_{C_TL^2} + \|v^\dag \|_{C_TL^2} +\dl^{-1} ) \| v-v^\dag\|_{C_TL^2}.
\label{dif-F-2}
\end{align}
\end{lemma}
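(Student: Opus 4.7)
The strategy is to work directly from the explicit formula \eqref{def-F} defining $F$ and $F^\dag$, using the low-frequency cancellation at $t=0$ and the time-integrated equation for $t>0$. Set $\wt v := v - v^\dag$ and $\wt G := G - G^\dag$; then
$(F - F^\dag)(t, x) = \int_\R \psi(y) \int_y^x \wt v(t, z)\, dz\, dy + \wt G(t)$. For \eqref{dif-F-1}, one has $\wt G(0) = 0$, and the hypothesis forces $\wt v(0) = \Pbhi \wt v(0) = \dx\bigl[\dx^{-1}\Pbhi \wt v(0)\bigr]$. Hence
$\int_y^x \wt v(0, z)\, dz = \bigl[\dx^{-1}\Pbhi \wt v(0)\bigr](x) - \bigl[\dx^{-1}\Pbhi \wt v(0)\bigr](y)$,
and the high-frequency cutoff combined with Bernstein's inequality gives $\|\dx^{-1}\Pbhi \wt v(0)\|_{L^\infty} \les \|\wt v(0)\|_{L^2}$. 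Using $\int_\R \psi = 1$, this proves \eqref{dif-F-1}.

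For \eqref{dif-F-2}, split $\wt v(t) = \Pbhi \wt v(t) + \Pblo \wt v(t)$. The high-frequency contribution is controlled at each fixed $t$ exactly as in the proof of \eqref{dif-F-1}, giving a bound $\les \|\wt v(t)\|_{L^2}$. The main difficulty is the low-frequency contribution, since $\Pblo \wt v(t)$ need no longer vanish at positive times and its naive spatial primitive need not lie in $L^\infty$. To handle this I exploit the equation: subtracting the two copies of \eqref{eq-v} and writing the right-hand side as a single $x$-derivative yields
\[
\dt \wt v = \dx\bigl[\H \dx \wt v + (v + v^\dag)\wt v + \Qdl \wt v\bigr].
\]
Applying $\Pblo$, integrating in time, and using $\Pblo \wt v(0) = 0$, one obtains $\Pblo \wt v(t) = \dx \Psi(t)$ with
\[
\Psi(t, x) := \int_0^t \Pblo\bigl[\H \dx \wt v + (v + v^\dag)\wt v + \Qdl \wt v\bigr](t', x)\, dt'.
\]
Therefore $\int_y^x \Pblo \wt v(t, z)\, dz = \Psi(t, x) - \Psi(t, y)$, and it suffices to bound $\|\Psi(t)\|_{L^\infty}$.

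Termwise: Bernstein at low frequency gives $\|\Pblo \H \dx \wt v\|_{L^\infty} \les \|\wt v\|_{L^2}$; the kernel bound $\|\Pblo f\|_{L^\infty} \les \|f\|_{L^1}$ combined with Cauchy-Schwarz yields $\|\Pblo[(v + v^\dag)\wt v]\|_{L^\infty} \les (\|v\|_{L^2} + \|v^\dag\|_{L^2})\|\wt v\|_{L^2}$; and Bernstein together with Lemma~\ref{Q-est} gives $\|\Pblo \Qdl \wt v\|_{L^\infty} \les \dl^{-1}\|\wt v\|_{L^2}$. The scalar $\wt G(t) = \int_0^t \int_\R \psi(y)\bigl[\H \dy \wt v + \Qdl \wt v + (v + v^\dag)\wt v\bigr](t', y)\, dy\, dt'$ is handled by integrating by parts in $y$ in the first term to transfer $\dy$ onto $\psi$ (turning $\H\dy$ into a bounded operation on $\wt v$), and by the same estimates as for $\Psi$ for the remaining two terms. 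Summing all contributions and taking $\sup_{t\in[0,T]}$ yields \eqref{dif-F-2}. The main obstacle is precisely the low-frequency part of the spatial primitive of $\wt v(t)$; it is overcome by the identity $\Pblo \wt v(t) = \dx \Psi(t)$, which is available only thanks to the hypothesis $\Pblo \wt v(0) = 0$ at the initial time.
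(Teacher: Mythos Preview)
Your proof is correct and follows essentially the same strategy as the paper: Bernstein's inequality for the high-frequency contribution and time-integration of the equation to control the low-frequency part. The only cosmetic difference is that the paper applies the high/low split directly to $F-F^\dag$ and invokes the Duhamel formula \eqref{Duh-F} for $\Pblo(F-F^\dag)$, whereas you work from the explicit formula \eqref{def-F}, split $\wt v$ by frequency, and write $\Pblo\wt v(t)=\dx\Psi(t)$ via the equation for $\wt v$; these are equivalent bookkeepings of the same argument.
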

\begin{proof}
The proof is essentially the same as that in \cite[Lemma 4.1]{MP}, where the low-frequency assumption ensures \eqref{dif-F-1} by Bernstein's inequality. For \eqref{dif-F-2}, we use $\Pbhi ( F-F^{\dag})= \dx^{-1}\Pbhi (v-v^{\dag})$ and Bernstein's inequality. For the low-frequencies, we use the Duhamel formulation \eqref{Duh-F}. The only new term is that with $\Qdl$ for which we use Lemma~\ref{Q-est}, picking up a factor of $\dl^{-1}$.
\end{proof}

For $v\in C_{T}L^{2}$ which is a solution to 
\begin{align}
  v(t)= e^{t \H \dx^2 } v(0) + 
  \int_0^t  e^{(t-t')\H \dx^2}
 \big (\dx(v^2)+\Qdl \dx v \big) dt' \label{Duh-v}
 \end{align}
 in the sense of spatial distributions for $t\in [0,T]$, we see that $F=F[v]$ in \eqref{def-F} solves 
 \begin{align}
F(t)= e^{t \H \dx^2 } F(0) + 
 \int_0^t  e^{(t-t')\H \dx^2}
 \big (v^2+\Qdl v \big) dt',\label{Duh-F}
 \end{align}
 in the sense of spatial distributions, for all $t\in [0,T]$, and where $F(0)= \int_{\R} \psi(y) \int_{y}^{x} v(0,z)dzdy$.
However, the formal computations deriving \eqref{eq-w} are only easily justified for sufficiently high-regularity $v\in C_{T}H^{s}$. For $v\in C_{T}L^2$, we need to justify that the gauged variable $w$ is a distributional solution to \eqref{eq-w}. This can be done by following the arguments in \cite[Remark 2.7]{MP-2023}, themselves originating in \cite{Kishimoto}. 
The idea is to consider frequency truncated $v_{\leq N}:= \P_{\leq N} v$ for some $N\in 2^{\N}$, define a corresponding spatial antiderivative $F_{\leq N}$ and gauged variable $w_{\leq N}$, and to pass to the limit in $C_{T}H^{-2}_{x}$ as $N\to \infty$ in the resulting equation for $w_{\leq N}$. 
In the presence of the depth parameter $0<\dl<\infty$, \eqref{error-2} with $\s_2=0$ allows to pass to the  limit in $C_{T}L^2_{x}$ for the last term on the right-hand side of \eqref{eq-w}. 
Moreover, by Lemma \ref{est-error}, the gauge transformation $w$ of the solution $v$ solves 
\begin{align*}
 w(t) & =e^{-it\dx^2 }w(0) + \int^t_0 e^{-i(t-t')\dx^2 }
  \big\{\hspace{-1mm} -\hspace{-1mm}2\dx\Pbhip [(\dx^{-1} w)( \P_-\dx v)]
  +E_{1}( e^{iF},v) + E_{2}(e^{iF},v)\big\} dt'
\end{align*}
  in the sense of spatial distributions for each $t\in [0,T]$.

\subsection{Strichartz estimates}

We recall the Strichartz estimates for BO on $\R$: if $u$ solves
$\dt u-\H \dx^2 u=f,$ on $[0,T]\times \R$, then, for any pair $(p,q)$ satisfying $4\leq p\leq \infty$,  $2\leq q\leq \infty$, and $\frac{2}{p}+\frac{1}{q}=\frac{1}{2}$,  we have
\[
\|u\|_{L^p_T L_{x}^q } \les \|u\|_{L^\infty_T L_{x}^2} +\|f\|_{L^1_TL_{x}^2}.
\]
We call such pairs of exponents $(p,q)$ Strichartz admissible.
We now state the refined Strichartz estimates we will use here, which are essentially proven in \cite[Lemma 2.8]{MP-2023} with the addition of using Lemma~\ref{Q-est}.

\begin{lemma}[Refined Strichartz estimates] \label{re-Stri}
Let $0<\dl<\infty$, 
$0\leq s \leq \frac14$, $N\in 2^{\N}$, $N\geq 2^6$, and $T>0$.
Let $(p,q)$ be a Strichartz admissible pair 
 and define  $\al(s,p)=\frac1p(\frac32-s)-s$.

\noi \textup{(i)} If $v$ is a solution to the equation
 $\dt v-\H \dx^2 v = \dx(v_1v_2+v_3v_4) + \Qdl \dx v_5 $,
  then

\begin{align*}
\begin{split}
\| \P_N v \|_{L^p_T L^q} & \les T^{\frac1p} N^{\al(s,p)}\big\{  \|\P_N v\|_{L^\infty_T H^s }  + \|v_1\|_{L^\infty_T H^s }  \|v_2\|_{L^\infty_T H^s }   \\
& \hphantom{XXXXXXXXXXX}
 + \|v_3\|_{L^\infty_T H_x^s }  \|v_4\|_{L^\infty_T H^s }+  \dl^{-2}\|\P_N v_5\|_{L^\infty_T L^2} \big\}.
 \end{split}
\end{align*}

\noi \textup{(ii)}
If $w$ is a solution to the equation
$$\dt w+i \dx^2 w = 
-2 \Pbhip \dx [ \dx^{-1}  w_1 \cdot \P_- \dx w_2
+\dx^{-1}  w_3\cdot  \P_- \dx w_4  ] + \phi, $$   
where $\supp \ft w_1 , \supp \ft w_3 \subset (2^{-3},\infty)$ and $\phi\in L^{\infty}_{T}L^{2}_x$, then we have
\begin{align}
\begin{split}
  \| \P_N w \|_{L^p_T L^q} 
  & \les T^{\frac1p} N^{\al(s,p)}
\big\{\|\P_N w\|_{L^\infty_T H^s }  + \|w_1\|_{L^\infty_T H^s }  \|w_2\|_{L^\infty_T H^s }  \\ 
&\hphantom{XXXXXXXXX}+ \|w_3\|_{L^\infty_T H^s }  \|w_4\|_{L^\infty_T H^s }+ \|\P_N \phi\|_{L^{\infty}_T L^2 } \big\}.
\end{split}
 \label{RS-2}
\end{align}
\end{lemma}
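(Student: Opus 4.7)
My plan is to adapt the proof of \cite[Lemma 2.8]{MP-2023} for the Benjamin-Ono equation. The new ingredients to handle are, in part~(i), the contribution of the smoothing perturbation $\Qdl \dx v_5$, and in part~(ii), the forcing $\phi$; both should slot into the standard scheme thanks to the smoothing estimate of Lemma~\ref{Q-est}.

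The first step is to partition $[0,T]$ into $O(T/\tau)$ consecutive subintervals $I_j$ of length at most $\tau := N^{s - 3/2}$, which is $\leq 1$ under the hypotheses $N \geq 2^6$ and $s \leq \tfrac14$. On each $I_j$, the Duhamel formula together with the standard inhomogeneous Strichartz estimate for the Benjamin-Ono linear propagator yields
\begin{equation*}
\|\P_N v\|_{L^p(I_j) L^q} \les \|\P_N v(a_j)\|_{L^2} + |I_j|\cdot \|\P_N f\|_{L^\infty(I_j) L^2},
\end{equation*}
where $f$ denotes the right-hand side of the equation satisfied by $v$. Taking the $\ell^p_j$ sum of the $p$-th powers then produces the skeleton estimate
\begin{equation*}
\|\P_N v\|_{L^p_T L^q} \les T^{1/p}\tau^{-1/p}\|\P_N v\|_{L^\infty_T L^2} + T^{1/p}\tau^{1-1/p}\|\P_N f\|_{L^\infty_T L^2}.
\end{equation*}

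It remains to estimate $\|\P_N f\|_{L^\infty_T L^2}$. For the quadratic contributions, an analogue of the localised product estimate~\eqref{prod1} in the range $0 \leq s \leq \tfrac14$ (proved by the same Cauchy-Schwarz argument) gives $\|\P_N\dx(v_iv_{i+1})\|_{L^2} \les N^{3/2-2s}\|v_i\|_{H^s}\|v_{i+1}\|_{H^s}$. For the perturbation term, Lemma~\ref{Q-est} applied with $s=0$ yields $\|\P_N \Qdl \dx v_5\|_{L^2} \les \dl^{-2}\|\P_N v_5\|_{L^2}$. With the choice $\tau = N^{s-3/2}$, a short computation confirms that $\tau^{-1/p}\|\P_N v\|_{L^\infty_T L^2} \sim N^{\al(s,p)}\|\P_N v\|_{L^\infty_T H^s}$ and $\tau^{1-1/p}N^{3/2-2s} = N^{\al(s,p)}$, while $\tau^{1-1/p} \leq N^{\al(s,p)}$ in the range $s \leq \tfrac{3}{4}$; combining these gives the bound in part~(i). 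Part~(ii) is essentially identical: the frequency-support condition on $\ft{w_1}, \ft{w_3}$ makes $\dx^{-1}$ bounded on those factors, so the same product estimate applies to $\dx^{-1}w_i \cdot \P_-\dx w_j$, while the forcing $\phi$ is absorbed directly via $\tau^{1-1/p}\|\P_N\phi\|_{L^\infty_T L^2} \leq N^{\al(s,p)}\|\P_N\phi\|_{L^\infty_T L^2}$. I do not anticipate any serious obstacle here: the only conceptual point is that the order-two smoothing of $\Qdl\dx$ is amply sufficient to render the perturbation harmless at the regularities under consideration, and the optimal choice of $\tau$ coincides with the one used in the purely Benjamin-Ono setting.
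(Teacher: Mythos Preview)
Your proposal is correct and follows exactly the approach the paper itself indicates: the argument is that of \cite[Lemma~2.8]{MP-2023} with the single addition of Lemma~\ref{Q-est} to absorb the $\Qdl\dx v_5$ term (and the trivial handling of $\phi$ in part~(ii)). One small caveat: the localised product estimate $\|\P_N(v_iv_{i+1})\|_{L^2}\les N^{1/2-2s}\|v_i\|_{H^s}\|v_{i+1}\|_{H^s}$ is indeed valid for $0\le s\le\tfrac14$, but the Cauchy--Schwarz proof of~\eqref{prod1} as written breaks down in the high--high regime when $4s\le1$, so you should instead invoke Bernstein's inequality (passing through $L^1$) there.
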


\noi
The estimates in Lemma~\ref{re-Stri} give rise to the following bounds.

\begin{lemma}\label{dt-est}
 Let $0<\dl<\infty$, $0<s\leq \frac14$, $N\in 2^{\N}$ such that $N\geq 2^6$, and
 $2\leq q \leq 4$ such that $(\frac32-s)(\frac14-\frac{1}{2q})-s <0$.
 
\noi \textup{(i)} If $v, \vda$ are two solutions to the Cauchy problem of $\eqref{eq-v}$ with the same initial data $v_0\in H^s$, and we denote
$\tv = e^{-t \H \dx^2 }v(t)$ 
and
 $\tvda = e^{-t \H \dx^2 }\vda(t)$, then 
\begin{align}
 \| \P_N \dt (\tv -\tvda) \|_{L^1_TL^2} 
 \les 
 T 
 N^{\frac{2}{q}+\frac12} 
 \big( 
 1+\dl^{-2}+\|v\|_{L^\infty_T H^s} 
 + \|\vda\|_{L^\infty_T H^s}  
  \big)^3
 \|v-\vda\|_{L^\infty_T H^s}.
 \label{dif-v}
\end{align}

\noi \textup{(ii)}
If $w, \wda$
are the gauge transformations of
$v,\vda$ in \textup{(i)}, respectively, and $\tw = e^{ it\dx^2 }w(t)$ 
and
 $\twda = e^{it\dx^2 } \wda(t)$, then
\begin{align}
\begin{split}
 \| \P_N \dt (\tw -\twda) \|_{L^1_TL^2} \les T N^{\frac{2}{q}+\frac12} 
 \big( 
 1+ \dl^{-1}+T+\|v\|_{L^\infty_T H^s} 
 + \|\vda\|_{L^\infty_T H^s}  
  \big)^6  \\
 \times \big(
 \|w-\wda\|_{L^\infty_T H^s} + 
 \|v-\vda\|_{L^\infty_T H^s}
 \big). 
 \end{split}
 \label{dif-w-2} 
\end{align}
\end{lemma}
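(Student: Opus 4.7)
Both parts share a common setup. Since $\tv=e^{-t\H\dx^2}v$ and $\tw=e^{it\dx^2}w$ with $v,w$ satisfying \eqref{eq-v} and \eqref{eq-w} respectively, and since the propagators commute with $\P_N$ and are $L^2$-isometries,
\[
\|\P_N\dt(\tv-\tvda)\|_{L^1_TL^2}=\|\P_N[\dx((v-\vda)(v+\vda))+\Qdl\dx(v-\vda)]\|_{L^1_TL^2},
\]
and analogously $\|\P_N\dt(\tw-\twda)\|_{L^1_TL^2}$ equals the $L^1_TL^2$-norm at frequency $N$ of the difference of the three nonlinear terms in \eqref{eq-w}. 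The proof thus reduces to bilinear estimates at frequency $N$.

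For part (i), the $\Qdl$-contribution is controlled by Lemma~\ref{Q-est}: $\|\Qdl\dx(v-\vda)\|_{L^\infty_TL^2}\lesssim\dl^{-2}\|v-\vda\|_{L^\infty_TH^s}$, and multiplying by $T$ yields a bound absorbed into the RHS of \eqref{dif-v} (using $N\geq 2^6$ so that $N^{2/q+1/2}\geq 1$). The main term $\P_N\dx((v-\vda)(v+\vda))$ is treated by Littlewood--Paley-decomposing both factors; since the output frequency $N$ forces $\max(N_1,N_2)\gtrsim N$, we split into low-high/high-low and high-high regimes. In each regime I place the low-frequency (or one high-frequency) factor in $L^{2q/(q-2)}_x$ using Bernstein at the cost of $N_\textup{low}^{1/q-s}$, and estimate the remaining factor (at frequency $\sim N_1\sim N$) in $L^p_TL^q_x$ via Lemma~\ref{re-Stri}(i) applied to $v-\vda$, which satisfies the required equation with $v_1=v-\vda$, $v_2=v+\vda$, $v_5=v-\vda$. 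After Hölder in time (using admissibility $\tfrac1p+\tfrac2q=1$ so that $T^{1/p}\cdot T^{1-1/p}=T$) and summing the low-frequency Bernstein tail (summable precisely under $(\tfrac32-s)(\tfrac14-\tfrac{1}{2q})-s<0$), the resulting exponent combines with the extra $N$ from $\dx$ to give $N^{2/q+1/2}$. The cube in \eqref{dif-v} aggregates contributions from the paraproduct factor $v+\vda$, the refined Strichartz bracket (producing factors of $\|v\|_{H^s}+\|\vda\|_{H^s}$ and $\dl^{-2}$), and the final $\|v-\vda\|_{H^s}$.

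Part (ii) follows the same template for each of the three terms on the RHS of \eqref{eq-w}. The $E_1(e^{iF},v)$-term has output frequency $\lesssim 1$ (as it is $\Pblo e^{iF}$ multiplied by $\P_-\dx v$ and then frequency-restricted by $\Pbhip$), so $\P_N E_1\equiv 0$ for $N\geq 2^6$ and this term is harmless. The $E_2(e^{iF},v)$-difference is controlled by the smoothing estimate \eqref{error-2} with $\sigma_2$ close to $\tfrac12$, combined with Lemma~\ref{dif-F-2} to bound $\|F-\Fda\|_{L^\infty}\lesssim\jb{T}(\cdots)\|v-\vda\|_{L^\infty_TL^2}$; the $\dl^{-1}$ smoothing in \eqref{error-2} subsumes this contribution into the final bound. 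The main bilinear term's difference,
\[
-2\dx\Pbhip[(\dx^{-1}(w-\wda))(\P_-\dx v)]-2\dx\Pbhip[(\dx^{-1}\wda)(\P_-\dx(v-\vda))],
\]
is estimated in exact parallel with part (i), using Lemma~\ref{re-Stri}(ii) when the high-frequency factor is a gauged variable (absorbing the $E_2$-piece of \eqref{eq-w} into the source term $\phi$) and Lemma~\ref{re-Stri}(i) otherwise. The $\jb{T}$ from Lemma~\ref{dif-F-2}, the quadratic factor $(1+\|f\|_{H^{\sigma_2}})^2$ in \eqref{error-2}, together with the cube already appearing in part (i), conspire to produce the sixth power in \eqref{dif-w-2}; the appearance of $\|w-\wda\|_{L^\infty_TH^s}$ alongside $\|v-\vda\|_{L^\infty_TH^s}$ comes from the first summand in the display above.

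The main technical obstacle will be the precise matching of exponents. I expect the key identity to verify is $1+\alpha(s,p)+\tfrac1q-s=\tfrac12+\tfrac2q$ under admissibility $\tfrac1p+\tfrac2q=1$, which together with the summability of the Bernstein low-frequency tail is exactly captured by the hypothesis $(\tfrac32-s)(\tfrac14-\tfrac{1}{2q})-s<0$; values of $q$ outside the admissible range but inside $[2,4]$ follow from the monotonicity of $2/q+1/2$ in $q$, so that the target bound is a weakening of the one proved at the admissible endpoint.
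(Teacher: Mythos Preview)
Your proposal is essentially correct and follows the same route as the paper, which simply defers to \cite[Lemma~2.11]{MP-2023} and indicates how to bound the two new $\delta$-dependent contributions (the $\Qdl\dx(v-\vda)$ term in (i) via Lemma~\ref{Q-est}, and the $E_2$-difference in (ii) via \eqref{JS2}, Lemma~\ref{Q-est}, and \eqref{dif-F-2}). Your observation that $\P_N E_1\equiv 0$ for $N\geq 2^6$ is correct and matches the frequency localisation noted in the proof of Lemma~\ref{est-error}.

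One small slip: you cannot invoke \eqref{error-2} with $\sigma_2$ close to $\tfrac12$, since the right-hand side there requires $\|v\|_{H^{\sigma_2}}$ and you only have $v,\vda\in C_TH^s$ with $s\le \tfrac14$. This is harmless---taking $\sigma_2=0$ (or $\sigma_2=s$) already gives a bound uniform in $N$, which suffices because the target estimate carries the factor $N^{2/q+1/2}\ge 1$. Also, the ``key identity'' you state at the end does not hold as an identity for all admissible $(p,q)$; the correct bookkeeping (as in \cite{MP-2023}) combines the $N$ from $\partial_x$, the $N^{\alpha(s,p)}$ from the refined Strichartz estimate, and the low-frequency Bernstein sum, with the hypothesis $(\tfrac32-s)(\tfrac14-\tfrac1{2q})-s<0$ ensuring summability and the final exponent $\tfrac2q+\tfrac12$.
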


\begin{proof}
The proofs of (i) and (ii) are almost exactly the same as those in \cite[Lemma 2.11]{MP-2023}.
For (i), the only new estimate required is
\begin{align*}
 \| 
  e^{-t \H \dx^2}  
  \P_N \Qdl \dx
 (v-\vda)
 \|_{L^1_T L^2} 
 \les 
 T \dl^{-2} \| v-\vda \|_{L^\infty_T L^2},
\end{align*}
and for (ii), we only need to estimate
\begin{align*}
\| e^{ -it \dx^2 }\P_N   \dx \Pbhip( e^{iF} \Qdl v -e^{i\Fda} \Qdl \vda )
  \|_{L^1_T L^2}. 
  \end{align*}
This is easily done by distributing the derivative, and using \eqref{JS2}, Lemma~\ref{Q-est}, and \eqref{dif-F-2}.
\end{proof}

The final result of this section is control of the solutions $v, v^{\dag}$ in terms of their gauge transformations $w,w^{\dag}$.

\begin{lemma}\label{v<w+v}
Let $0<\dl<\infty$, $0 \leq  s <\frac12$, $N\geq 10$ be a dyadic number, and $T>0$.
If $v, \vda$ are two solutions to $\eqref{eq-v}$ and $w,\wda$ are the corresponding
gauge transformations of $v,\vda$ respectively, then
\begin{align}
&\|\P_{\leq N} (v-\vda) \|_{C_T H^s}
\les T \big\{ N^{\frac{3}{2}+s}  (1+\|v\|_{C_TH^s} + \|\vda\|_{C_T H^s}  )^2 +\dl^{-2}(1+\dl^{-s})\big\} \|v-\vda\|_{C_T L^2 } ,\label{dif-v-lo} \\
\begin{split}
& \|\P_{> N} (v-\vda) \|_{C_T H^s}  \les   (1+\dl^{-1}+\|v\|_{C_TH^s} + \|\vda\|_{C_TH^s}  )^5\\ 
 & \hphantom{XXxXXXXXXX}\times \big(
  \|w-\wda\|_{C_TH^s}
   + 
    \jb{T} (N^{s-\frac12} + \|P_{>N} \wda \|_{C_TH^s})
\|v-\vda\|_{C_T L^2 } \big).
\end{split} \label{dif-v-hi}  
\end{align}
\end{lemma}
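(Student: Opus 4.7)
The plan is to treat \eqref{dif-v-lo} and \eqref{dif-v-hi} separately. For \eqref{dif-v-lo} the (implicit) assumption $v(0)=\vda(0)$ is needed, otherwise the linear Duhamel term would spoil the $T$-prefactor on the right; and throughout, reality of $v,\vda$ gives $\P_{>N}(v-\vda) = 2\Re(\P_+\P_{>N}(v-\vda))$, so for \eqref{dif-v-hi} it suffices to control $\P_+\P_{>N}(v-\vda)$ in $H^s$.

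For \eqref{dif-v-lo}, I would apply Duhamel's formula to \eqref{eq-v} and obtain
\begin{equation*}
v(t)-\vda(t) = \int_0^t e^{(t-t')\H\dx^2}\bigl[\dx((v+\vda)(v-\vda)) + \Qdl\dx(v-\vda)\bigr](t')\,dt'.
\end{equation*}
Since $e^{t\H\dx^2}$ is unitary on $H^s$, the $C_TH^s$-norm of $\P_{\leq N}(v-\vda)$ is controlled by time-integrating the $H^s$-norm of the integrand. For the quadratic term, Bernstein in the form $\|\P_{\leq N}h\|_{L^2}\les N^{\frac12}\|h\|_{L^1}$ together with the multiplier bound $\|\P_{\leq N}J^s\dx\,\cdot\|_{L^2}\les N^{1+s}\|\P_{\leq N}\cdot\|_{L^2}$ yields $N^{\frac32+s}(\|v\|_{L^2}+\|\vda\|_{L^2})\|v-\vda\|_{L^2}$; for the $\Qdl$-term, Lemma~\ref{Q-est} supplies the $\dl^{-2}$-gain. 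Integrating in time provides the $T$-factor and closes \eqref{dif-v-lo}.

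For \eqref{dif-v-hi}, the plan is to invert the gauge. Starting from $iw = \Pbhip(e^{iF}v) = \P_+(e^{iF}v) - \P_+\Pblo(e^{iF}v)$ and using that on $\R$ there is no zero mode, so $e^{iF}v = \P_+(e^{iF}v) + \P_-(e^{iF}v)$, I obtain
\begin{equation*}
v = ie^{-iF}w + e^{-iF}\P_+\Pblo(e^{iF}v) + e^{-iF}\P_-(e^{iF}v),
\end{equation*}
and the analogous expression for $\vda$. Subtracting and splitting each of the three differences $e^{-iF}X - e^{-i\Fda}X^\dagger$ as $(e^{-iF}-e^{-i\Fda})X + e^{-i\Fda}(X-X^\dagger)$ produces six pieces. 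I apply $\P_+\P_{>N}$ and the $H^s$-norm to each, invoking Lemma~\ref{prod-2} for products with the exponentials, and Lemma~\ref{dif-F} to convert every occurrence of $\|F-\Fda\|_{L^\infty}$ into $\jb{T}(1+\|v\|_{C_TL^2}+\|\vda\|_{C_TL^2}+\dl^{-1})\|v-\vda\|_{C_TL^2}$ — this is the source of the $\jb{T}$-prefactor on the right. The piece $e^{-i\Fda}(w-\wda)$ gives the $\|w-\wda\|_{C_TH^s}$-contribution directly via \eqref{JS1}, while the piece $(e^{-iF}-e^{-i\Fda})w$, after writing $w = \wda + (w-\wda)$ and further splitting $\wda$ by frequency into $\P_{\leq N}\wda + \P_{>N}\wda$, yields the $\|\P_{>N}\wda\|_{C_TH^s}\|v-\vda\|_{C_TL^2}$-contribution (and contributes to the $N^{s-\frac12}\|v-\vda\|_{C_TL^2}$-term).

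The main obstacle is extracting the $N^{s-\frac12}$-gain from the two correction pieces $e^{-iF}\P_+\Pblo(e^{iF}v)$ and $e^{-iF}\P_-(e^{iF}v)$. For the first, the frequency support of $\P_+\Pblo(e^{iF}v)$ is contained in $[0,2]$, so for $N\geq 10$ only $\Pbhi(e^{-iF})$ can contribute to $\P_+\P_{>N}$ of the product; using reality of $v$, the second piece rewrites as $\overline{\P_+(e^{-iF}v)}$ and is handled symmetrically. The gain will come from the elementary $\|\P_{>N}h\|_{H^s}\leq N^{s-\frac12}\|h\|_{H^{1/2}}$: one bounds the relevant product in $H^{1/2}$ using the fractional Leibniz rule (Lemma~\ref{prod-1}) and Lemma~\ref{prod-2}, together with the estimates $\|\Pbhi e^{-iF}\|_{L^\infty} + \|\Pbhi e^{-iF}\|_{\dot H^1}\les \|v\|_{L^2}$ that follow from $\dx e^{-iF} = -ive^{-iF}$ and \eqref{PhieF}. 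The careful book-keeping required — to make sure the $\|w-\wda\|$-contribution avoids any $N$-smallness, each $\|v-\vda\|_{C_TL^2}$-contribution carries either the $N^{s-\frac12}$ or the $\|\P_{>N}\wda\|_{C_TH^s}$ factor, and the polynomial prefactor stays at power five in the $H^s$-norms of $v,\vda$ — is the heart of the argument.
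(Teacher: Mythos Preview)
Your proposal is correct and follows exactly the paper's approach, which simply defers to \cite[Lemma~2.12]{MP-2023} and records the two ILW-specific modifications you identified: Lemma~\ref{Q-est} for the extra $\Qdl\dx(v-\vda)$ term in the Duhamel formula behind \eqref{dif-v-lo}, and \eqref{dif-F-2} wherever $\|F-\Fda\|_{L^\infty}$ arises in the gauge-inversion argument for \eqref{dif-v-hi}. Your observation that \eqref{dif-v-lo} implicitly needs $v(0)=\vda(0)$ is also correct and matches how the lemma is actually applied.

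One small caution on your sketch of the $\P_-(e^{iF}v)$ correction: the claimed ``symmetric'' treatment via the $H^{1/2}$ bound does not quite close, because unlike $\P_+\Pblo(e^{iF}v)$ the factor $\P_-(e^{iF}v)$ is not low-frequency, and the product $\Pbhip(e^{-iF})\cdot\P_-(e^{iF}v)$ just barely misses $H^{1/2}$ (you would need $\Pbhi(e^{-iF})\in H^{1+}$, whereas only $H^1$ is available). The $N^{s-\frac12}$ gain for that piece instead comes from a direct dyadic argument: the sign constraints force the $\Pbhip(e^{-iF})$ factor to sit at frequency $\gtrsim N$, and one uses $\|\P_{N_1}\Pbhip(e^{-iF})\|_{L^2}\les N_1^{-1}\|v\|_{L^2}$ together with Bernstein on the other factor.
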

\begin{proof}
We follow \cite[Lemma 2.12]{MP-2023}. For the low-frequencies in \eqref{dif-v-lo}, we use the Duhamel formulation \eqref{Duh-v} for which the only new term can be estimated as follows using Lemma~\ref{Q-est}:
\begin{align*}
\| \mathbf{P}_{\leq N} \mathcal{Q}_{\dl}\dx (v-v^{\dagger} ) \|_{L^1_T H^s}\les \dl^{-2}(1+\dl^{-s})T\| v- v^{\dagger} \|_{C_T L^2}.
\end{align*}
For the high-frequencies in \eqref{dif-v-hi}, we use \eqref{dif-F-2} whenever such a difference appears in the proof of \cite[Lemma 2.12]{MP-2023}.
\end{proof}

\section{Normal form reductions on $\R$} \label{NF}

The main goal of this section is to prove the following result which is an estimate controlling the difference of the gauged variables $w$ in terms of the original variable $v$. This will provide a counterpart to Lemma~\ref{v<w+v}.

\begin{proposition}
\label{PROP:wtov}
Let $0<\dl<\infty$, $s_0<s\leq \frac14,$ where $s_0$ is as in Theorem~\ref{THM:conv},
 $0<T<1$, and $M>1$. 
If $v, \vda\in C_T H^s$ are two solutions 
to $\eqref{eq-v}$ with the same initial data
and $w,\wda$ are the corresponding
gauge transformations of $v,\vda$ respectively, then,
\begin{align} 
\begin{split}
  \| w-\wda\|_{C_T H^s} & \les
(T M^\frac{3}{2} +M^{-\frac1{16} } )
(1+ \dl^{-1} +\|v\|_{C_T H^s}+\| \vda \|_{C_TH^s} )^{10}  \\
 & \quad \quad \times (\| w-\wda \|_{C_T H^s} +\|v-\vda\|_{C_T H^s} ).
\end{split}
\label{dif-w}
\end{align}

\end{proposition}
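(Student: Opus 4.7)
The plan is to adapt the two-round normal form reduction of Mo\c{s}incat--Pilod~\cite{MP-2023} from BO to the gauged ILW equation \eqref{eq-w}, treating the extra term $i\dx \Pbhip[e^{iF}\Qdl v]$ as a smoothing perturbation. Since $v(0) = \vda(0)$ by hypothesis, the construction \eqref{def-F} gives $F(0) = \Fda(0)$ and hence $w(0) = \wda(0)$. Writing $\tw(t) = e^{it\dx^2} w(t)$ and similarly for $\twda$, the twisted difference satisfies
\begin{align*}
(\tw - \twda)(t) = \int_0^t e^{it'\dx^2}\bigl\{ \Delta\mathcal{N}_0 + \Delta E_1 + \Delta E_2 \bigr\}(t')\, dt',
\end{align*}
where $\Delta\mathcal{N}_0$ is the difference of the principal trilinear nonlinearity $-2\dx\Pbhip[(\dx^{-1}w)(\P_- \dx v)]$ and $\Delta E_1$, $\Delta E_2$ are the corresponding differences of the expressions \eqref{E1E2}. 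After the dyadic decomposition $w - \wda = \Pblo(w-\wda) + \sum_{N \ges 1} \P_N(w - \wda)$, the low-frequency part is controlled by Bernstein's inequality combined with Lemma~\ref{est-error}, while $\Delta E_1$ and $\Delta E_2$ are bounded uniformly using Lemma~\ref{est-error} and Lemma~\ref{dif-F}, picking up $\dl^{-1}$ factors from Lemma~\ref{Q-est}. The main effort concerns $\Delta\mathcal{N}_0$ at high frequencies.

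For the principal trilinear piece I apply the first normal form reduction \eqref{general}. The frequency constraint $\xi = \xi_1 + \xi_2$ with $\xi > 0$, $\xi_1 > 0$, $\xi_2 < 0$ forces the modulation phase $\phi = \xi^2 - \xi_1^2 - \xi_2^2 = -2\xi_1\xi_2$ to satisfy $|\phi| \ges \max(|\xi_1|,|\xi_2|) \min(|\xi_1|,|\xi_2|)$, so it never vanishes and $1/\phi$ acts as an effective smoothing kernel. The $t'=0$ boundary term in \eqref{general} vanishes thanks to the matching initial data, while the $t'=t$ boundary term is closed by the localised product estimate \eqref{prod1} together with \eqref{JS1}--\eqref{JS2}. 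In the interior, $\dt$ is distributed across $\ft w(t',\xi_1)\,\ft v(t',\xi_2)$ via \eqref{product}, producing quadrilinear expressions upon substituting the equations satisfied by $\dt \ft w$ and $\dt \ft v$.

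On the non-resonant part of these quadrilinear expressions I perform a second normal form reduction using a new, non-vanishing four-wave phase; on the remaining resonant or large-modulation parts I invoke the refined Strichartz estimates of Lemma~\ref{re-Stri} together with the time-derivative bounds of Lemma~\ref{dt-est}, which upgrade an $L^\infty_T L^2_x$ control into $L^p_T L^q_x$ at a cost of only $T^{1/p} N^{\al(s,p)}$. With this gain, dyadic summation split at $N \sim M$ yields the two advertised contributions $T M^{3/2}$ (from $N \les M$, using the Strichartz loss) and $M^{-1/16}$ (from $N \ges M$, using the smoothing from $1/\phi$). The sharp threshold $s_0 = 3 - \sqrt{33/4}$ arises precisely from optimising the Strichartz exponents so that the loss $\al(s,p)$ combined with the smoothing gain from $1/\phi$ remains summable in $N$.

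The main obstacle is justifying the normal form identity \eqref{general} in this setting. As flagged in Remark~\ref{RMK:wC1}, the presence of $E_2$ obstructs the $C^1$-in-time regularity of $\ft w(t,\xi)$ required to distribute $\dt$ via the classical product rule \eqref{product}. Following the strategy announced in the introduction and inspired by \cite[Appendix A]{K19}, I split $w = w_{\textup{good}} + w_{\textup{bad}}$, where $w_{\textup{good}}$ is built from the BO-type nonlinearities (which, after a Duhamel substitution, are genuinely $C^1$ in time) and $w_{\textup{bad}}$ carries the ILW perturbation $E_2$. For $w_{\textup{good}}$ the product rule \eqref{product} applies as usual; for $w_{\textup{bad}}$ I prove the analogue of \eqref{general} directly by Fubini and a mollification argument, yielding the two lemmas~\ref{IBP-1} and \ref{IBP-2} needed for each of the two normal form rounds. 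All ILW-specific terms carry at least one factor of $\dl^{-1}$ thanks to Lemma~\ref{Q-est}, and the nested use of Lemma~\ref{prod-2} across the two rounds of normal forms produces the final prefactor $(1 + \dl^{-1} + \|v\|_{C_T H^s} + \|\vda\|_{C_T H^s})^{10}$ in \eqref{dif-w}.
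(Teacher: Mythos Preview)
Your high-level strategy is the right one and matches the paper: two rounds of normal form on the gauged equation, perturbative treatment of $E_1,E_2$ via Lemma~\ref{est-error}, the good/bad splitting of $\tw$ with Lemmas~\ref{IBP-1}--\ref{IBP-2} to justify integration by parts, and refined Strichartz through Lemma~\ref{dt-est}. However, the mechanism you describe for producing the factors $TM^{3/2}$ and $M^{-1/16}$ is not correct and points to a genuine gap.

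The parameter $M$ is a \emph{modulation} threshold, not a frequency threshold. The paper does not split at output frequency $N\sim M$; it splits the bilinear term as $\mN^{(1)}=\mN^{(1)}_{\leq M}+\mN^{(1)}_{>M}$ according to $|\Omega|\leq M$ versus $|\Omega|>M$, where $\Omega=2\xi\xi_2$ (not $-2\xi_1\xi_2$, by the way). This is essential because $|\xi_2|$ can be arbitrarily small on $\R$, so $|\Omega|$ is \emph{not} bounded below and $1/\Omega$ is not a uniformly smoothing kernel; your claim that ``$1/\phi$ acts as an effective smoothing kernel'' on the whole bilinear piece fails precisely here. On $\{|\Omega|\leq M\}$ one does \emph{not} integrate by parts: Lemma~\ref{N-1<M} (and, after substitution, Lemma~\ref{N-2<M}) gives the direct bound with factors $M$ and $M^{3/2}$, which after time integration yield the $TM^{3/2}$ term---no Strichartz is used here. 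On $\{|\Omega|>M\}$ one integrates by parts, and the boundary operator $\mN^{(1)}_0$ is controlled by Lemma~\ref{bd-est-1}, which supplies the $M^{-1/8+}$ gain; the localised product estimate \eqref{prod1} you cite is for Sobolev regularity $\tfrac14<s<\tfrac12$ and plays no role in bounding this boundary term. The refined Strichartz estimates enter only \emph{after} the second round, inside Lemma~\ref{cor}, to close the quadrilinear remainders $\mN^{(2)}_{j,0}(\dt\tw,\cdots)$ etc.\ via Lemma~\ref{dt-est}, contributing a factor $T$ (not $TM^{3/2}$). Without the modulation split at the first stage your integration-by-parts step is not justified, and your accounting of where the two smallness factors come from does not match any viable argument.
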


The proof follows the similar case for the BO equation dealt with in \cite{MP-2023}. However, we must now control the additional terms arising from the depth parameter $0<\dl<\infty$. We postpone the proof of Proposition~\ref{PROP:wtov} as we must first make the necessary preparations for the normal form reductions we will employ. 
Given Proposition~\ref{PROP:wtov} and Lemma~\ref{v<w+v}, we can give the proof of Theorem~\ref{main-thm}, which follows that in \cite{MP-2023}.

 \begin{proof}[Proof of Theorem 
\ref{main-thm} when $\M=\R$]
Fix $0<\dl<\infty$ and $s_0<s\leq \frac 14$. 
It suffices to prove the unconditional well-posedness of solutions to the renormalised ILW equation \eqref{eq-v}. 
Given $u_0 \in H^{s}$, let $v\in C(\R,H^s)$ be the global solution to \eqref{eq-v} constructed in
  \cite{IS, CLOP} such that $v\vert_{t=0}=u_0$. Suppose that there is an open interval $I$ containing zero and another solution $v^{\dag}\in C(I;H^{s})$ to \eqref{eq-v} such that $v^{\dag}\vert_{t=0}= u_0$.  
By time translation symmetry and
time reversal symmetry of \eqref{eq-v}, it suffices to show 
$v \equiv \vda\in C_T H^s$ for some small $T>0$.
  Let $w$ and $\wda$ be the gauge transforms of $v$ and $\vda$ given by \eqref{w}, respectively.
Fix $T_1\in I\cap (0,1)$ and let
\begin{align*}
  K= (1+C_2) (1+\|v\|_{C_{T_1} H^s} + \|\vda\|_{C_{T_1} H^s}+\dl^{-1}),
\end{align*}
where 
$C_2 $ be the implicit constant in the estimate \eqref{dif-v-hi}.
Choose $N\in 2^{\mathbb{N}}$ so that
\begin{align}
 C_2 K^{5} (N^{s-\frac12}+ \|\P_{>\frac{N}2} \wda\|_{C_{T_1} H^s})  \label{choice1}
\leq \tfrac14. 
\end{align}
Next, we choose $0<T<T_1$ such that
\begin{align}
C_1 T \{  N^{\frac{3}{2}+s}K^2+ \dl^{-2}\} \leq \tfrac14, \label{choice2}
\end{align}
where $C_1$ is the implicit constant in the estimate \eqref{dif-v-lo}. 
Then, by \eqref{choice1}, \eqref{choice2}, and Lemma \ref{v<w+v}, we have 
\begin{align}
\|v-\vda\|_{C_T H^s} \leq 2C_{2}K^5 \|w-\tw\|_{C_TH^s}. \label{dv<dw}
\end{align}

Now we use Proposition~\ref{PROP:wtov} to obtain the reverse difference estimate. 
Fix $0<\eta<\frac12$ to be chosen later.
We choose $M$, independently of $T$, so that $C_{3}M^{-\frac{1}{16}}K^{10} \leq \eta$, where $C_3$ is the implicit constant in \eqref{dif-w}. By reducing $T$, if necessary, so that $C_{3}TM^{\frac 32}K^{10}\leq \eta$, \eqref{dif-w} implies 
\begin{align}
\| w-\wda\|_{C_{T}H^{s}} \leq \tfrac{2\eta}{1-2\eta}\|v-\vda\|_{C_{T}H^{s}}. \label{dw<dv}
\end{align}
Finally, we choose $\eta$ sufficiently small so that 
\begin{align}
2C_{2}K^{5}\tfrac{2\eta}{1-2\eta} \leq \tfrac 12. \label{choice3}
\end{align}
Combining \eqref{dv<dw}, \eqref{dw<dv}, and \eqref{choice3} then shows that $v\equiv \vda$ on $[0,T]$, which completes the proof of Theorem \ref{main-thm} when $\M=\R.$
 \end{proof}

\subsection{Preparations for normal forms}

Let 
$v\in C_{T}H^{s}$  be a solution to \eqref{eq-v} and 
$w$ be the gauge transformation of $v$ defined in \eqref{w}. Then, $w$ solves \eqref{eq-w}. We pass to the interaction representation by defining $\tw=e^{it \dx^2 }w$, $\tv=e^{-t\H \dx^2}v(t)$,
 and $\tE=e^{it \dx^2} E(t)$, where 
 \begin{align}
E:=E_1(e^{iF},v)+E_2(e^{iF},v) \label{Edef}
\end{align}
  and $E_1,E_2$ were defined in \eqref{E1E2}.
Then we have 
\begin{align} 
 \ft{\wt w}(t')\big\vert_{t'=0}^{t'=t} = \hspace{-1mm}-2i \int_0^t \intt_{\xi=\xi_{12}} 
e^{-it' \Omega(\xi,\xi_1,\xi_2) } 
 \tfrac{\xi \xi_2}{\xi_1}
 \sigma(\xi,\xi_1,\xi_2)   
\ft{\tw} (t',\xi_1)\ft{\tv} (t',\xi_2) d\xi_1  dt' + \int_{0}^{t}  \ft{\tE}(t',\xi)dt',
  \label{eq-tw}
\end{align}
where $\xi_{12}:=\xi_1+\xi_2$, 
\begin{align}
\Omega(\xi,\xi_1,\xi_2)= \xi|\xi|-\xi_1|\xi_1|-\xi_2|\xi_2| \quad \text{and} \quad  \sigma(\xi,\xi_1,\xi_2)=\chi_+(\xi) \wt \chi_+(\xi_1) \ind_{<0}(\xi_2).  \label{Os}
\end{align}
Here,
$\chi_+$ is the symbol of $\Pbhip$ 
and $\wt \chi_+ $  is a Schwartz function such that $\wt \chi_+=1$ on the support of $\chi_+$
 and  $\wt \chi_+=0$ on a neighbourhood of $0$.  
Note that by the support restriction of $\sigma$,
we have $\Omega(\xi,\xi_1,\xi_2)=2\xi\xi_2$.
On the other hand, we have 
\begin{align}
\begin{split}
\ft \tv(t',\xi)\big\vert_{t'=0}^{t'=t} & =i\xi \int_0^t \int_{\xi=\xi_{12}} e^{-it'\Omega(\xi,\xi_1,\xi_2)}
\ft \tv(t',\xi_1) \ft \tv(t',\xi_2) d\xi_1 dt'  +  \int_0^t \F (\Qdl \dx v)(t',\xi) dt'.
\end{split}\label{eq-tv}
\end{align}

\noi
We use the following lemma to justify the normal form steps for $v$.
\begin{lemma}\label{conti}
 Let  $v\in C_T L^2$ solve \eqref{eq-v}.
Then, for each fixed  $\xi\in \R$, the functions
 $t~\to~\ft \tv(t,\xi),\, \ft v(t,\xi)$ are continuously differentiable. 
\end{lemma}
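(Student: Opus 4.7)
My plan is to work directly from the Duhamel formulation \eqref{Duh-v} and exploit the fact that the nonlinearity of \eqref{eq-v} closes polynomially in $v$, so that after taking the spatial Fourier transform, the resulting identity for $\ft{\tv}(t,\xi)$ reduces to a linear Volterra integral equation in time for each fixed frequency $\xi$. Concretely, taking the spatial Fourier transform of \eqref{Duh-v} applied to $\tv = e^{-t\H \dx^2}v$ yields, as an identity in $L^2_\xi$,
\begin{align*}
\ft{\tv}(t,\xi) \;=\; \ft{v_0}(\xi) \;+\; \int_0^t e^{-it'\xi|\xi|}\bigl[i\xi\, \F(v^2)(t',\xi) + \F(\Qdl \dx v)(t',\xi)\bigr]\, dt'.
\end{align*}
Writing the Fourier symbol of $\Qdl \dx$ as $p_\dl(\xi) := \xi\bigl(\coth(\dl\xi) - \sgn(\xi)\bigr)$, which is even, continuous and bounded on $\R$ (with value $1/\dl$ at $\xi=0$ and exponential decay at infinity), and substituting $\ft v(t',\xi) = e^{it'\xi|\xi|}\ft{\tv}(t',\xi)$, the identity becomes
\begin{align*}
\ft{\tv}(t,\xi) \;=\; \ft{v_0}(\xi) \;+\; f(t,\xi) \;+\; p_\dl(\xi)\int_0^t \ft{\tv}(t',\xi)\, dt',
\qquad f(t,\xi) := i\xi\int_0^t e^{-it'\xi|\xi|}\F(v^2)(t',\xi)\, dt'.
\end{align*}

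Since $v \in C_T L^2$, the elementary estimate $\|v(t)^2 - v(s)^2\|_{L^1} \les \|v(t)+v(s)\|_{L^2}\|v(t)-v(s)\|_{L^2}$ implies $v^2 \in C_T L^1$; hence for each $\xi\in\R$, the quantity $\F(v^2)(t',\xi) = \int_\R v(t',x)^2 e^{-ix\xi}\, dx$ is absolutely convergent, continuous in $t'$, and bounded uniformly in $\xi$ by $\|v\|_{C_T L^2}^2$. Consequently $f(\cdot,\xi) \in C^1([0,T])$ with $\partial_{t'} f(t',\xi) = i\xi\, e^{-it'\xi|\xi|}\F(v^2)(t',\xi)$. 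Introducing $G(t) := \int_0^t \ft{\tv}(t',\xi)\, dt'$, the Volterra equation becomes the first-order linear ODE
\[
G'(t) - p_\dl(\xi)\, G(t) \;=\; \ft{v_0}(\xi) + f(t,\xi), \qquad G(0) = 0,
\]
which has a unique $C^1$ solution (by the integrating-factor method). Since the forcing is itself $C^1$, so is $G' = \ft{\tv}(\cdot,\xi)$. Identifying $\ft{\tv}(\cdot,\xi)$ with this canonical $C^1$ solution—which for each $t$ coincides with $\ft{\tv}(t,\cdot)$ as an element of $L^2_\xi$—yields the claim for $\ft{\tv}$; the claim for $\ft v$ then follows immediately from $\ft v(t,\xi) = e^{it\xi|\xi|}\ft{\tv}(t,\xi)$ together with the $C^\infty$ dependence of $e^{it\xi|\xi|}$ on $t$.

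The main subtlety is conceptual rather than computational: for $v(t) \in L^2$ there is no pointwise-defined Fourier transform, so the phrase ``for each fixed $\xi\in\R$'' must be interpreted through the canonical representative furnished by the Volterra equation. The key structural feature making this reduction possible is that \eqref{eq-v} is polynomial in $v$, so that the smoothing term $\Qdl \dx v$ enters the Volterra equation only through the bounded scalar multiplier $p_\dl(\xi)$; this is precisely why the analogous statement fails for the gauged variable $w$, whose equation \eqref{eq-w} contains the non-polynomial term $i\dx \Pbhip[e^{iF}\Qdl v]$ (cf.~Remark~\ref{RMK:wC1}).
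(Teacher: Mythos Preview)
Your proof is correct and follows essentially the same strategy as the paper's: both arguments hinge on the observation that, after passing to the interaction representation, the $\Qdl\dx v$ term contributes only a bounded scalar multiplier $p_\dl(\xi)$ times $\ft{\tv}(t,\xi)$, so that for each fixed $\xi$ the equation for $\ft{\tv}(\cdot,\xi)$ is a linear first-order ODE with $C^0$ (indeed $C^1$) forcing coming from the $v^2$ term, which is handled by an integrating factor. The only cosmetic difference is that the paper applies the integrating factor $e^{-p_\dl(\xi)t}$ directly to $\ft{\tv}$ (working first in the sense of temporal distributions), whereas you integrate once to $G(t)=\int_0^t\ft{\tv}(t',\xi)\,dt'$ and run the integrating-factor argument on $G$; this amounts to the same computation and carries the same conceptual caveat about choosing the canonical representative, which you correctly flag.
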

\begin{proof}
It suffices to show that $\ft \tv(t,\xi)$ is continuously differentiable.
 From \eqref{eq-tv}, we have 
 \begin{align}
\dt \ft \tv(t,\xi)  = i\xi \int_{\xi=\xi_{12}} e^{-it\Omega(\xi,\xi_1,\xi_2)}
\ft \tv(t,\xi_1) \ft \tv(t,\xi_2) d\xi_1  + i \xi \Qdl(\xi) \ft \tv(t,\xi). \label{eq-tv2}
\end{align}
The assumption $v\in C_{T}L^{2}_x$ with the dominated convergence theorem shows that 
\begin{align*}
t \mapsto  i\xi \int_{\xi=\xi_{12}} e^{-it\Omega(\xi,\xi_1,\xi_2)}
\ft \tv(t,\xi_1) \ft \tv(t,\xi_2) d\xi_1 
\end{align*}
is continuous for each fixed $\xi \in \R$. See \cite[Lemma 3.2]{MP-2023}. It follows from \eqref{eq-tv2} that
  \begin{align*}
\dt \big( e^{-i \xi \Qdl(\xi) t} \ft \tv(t,\xi) \big) &=   e^{-i \xi \Qdl(\xi) t}  \big[ \dt \ft\tv(t,\xi)-i \xi \Qdl(\xi) \ft \tv(t,\xi) \big]  \\
&=  e^{-i \xi \Qdl(\xi) t} i \xi \int_{\xi=\xi_{12}} e^{-it\Omega}\,
\ft \tv(t,\xi_1) \ft \tv(t,\xi_2) d\xi_1 . 
\end{align*}
  Here, the first equality is justified in the sense of temporal distributions for each fixed $\xi\in \R$, and the second equality shows that the map $t\mapsto  e^{i \xi \Qdl(\xi) t} \ft \tv(t,\xi) $ is $C^1$ for each fixed $\xi \in \R$. We thus conclude that $t\mapsto \ft \tv(t,\xi)$ is $C^{1}$ as well.
\end{proof}

 \begin{remark}\rm \label{RMK:wC1}
 Given $v\in C_{T}L^{2}$ a solution to \eqref{eq-v}, let $w$ be its gauge transformation according to \eqref{w}. Contrary to the BO equation in \cite{MP-2023}, we are not able to show that $t\mapsto \ft w(t,\xi)$ and $t\mapsto \ft \tw(t,\xi)$ are $C^{1}$ functions for each fixed $\xi\in \R$. 
The culprit is the term
$\F E_2(e^{iF},v)(t,\xi)$ for 
each fixed $\xi\in \R$, as we only know
 $E_2(e^{iF},v)\in C_T L^2$ and not that $E_2(e^{iF},v)\in C_T L^1$.
As we cannot apply the product rule directly in the normal form reductions for terms including $\ft \tw(t,\xi)$, we need extra ingredients in the normal form reductions below.
 \end{remark}

\subsection{First step of normal form reductions}\label{SEC:first}

Denote by $\mN^{(1)} $ the bilinear operator
\begin{align*}
\F \{\mN^{(1)}(\tw,\tv)\}(t,\xi)=
-2i \int_{\xi=\xi_{12}} 
e^{-it \Omega(\xi,\xi_1,\xi_2) }  \frac{\xi \xi_2}{\xi_1}
 \sigma(\xi,\xi_1,\xi_2)   
\ft{\tw} (t,\xi_1)\ft{\tv} (t,\xi_2) d\xi_1,
\end{align*}
 and, for $M\geq 1$ to be chosen later, consider the decomposition
 $ \mN^{(1)}=\mN^{(1)}_{\leq M} +
 \mN^{(1)}_{> M}$, where $\F \mN^{(1)}_{\leq M}(\tw,\tv)(t,\xi)$ includes the additional restriction to the set $\{ \xi_1 \in \R  : |\O(\xi,\xi_1 ,\xi-\xi_1)|\leq M\}$ and $\mN_{>M}^{(1)} : = \mN^{(1)}-\mN^{(1)}_{\leq M}$.
We recall that due to the extra frequency restriction $\{|\O|\leq M\}$, $\mN^{(1)}_{\leq M}$ can be controlled easily. See \cite[Lemma 3.4]{MP-2023} for a proof.
\begin{lemma}
\label{N-1<M}
If $s\geq 0$ and $0\leq \ta<\frac12,$ then we have
\begin{align*}
\| \mN^{(1)}_{\leq M}(u_1,u_2)  \|_{H^{s+\ta}} \les M \|u_1\|_{H^s} \|u_2\|_{L^2}.
\end{align*}
\end{lemma}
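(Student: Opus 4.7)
The estimate follows from a single algebraic identity combined with Young's convolution inequality. The support constraints in $\sigma(\xi,\xi_1,\xi_2)=\chi_+(\xi)\wt\chi_+(\xi_1)\ind_{<0}(\xi_2)$ force $\xi>0$, $\xi_1\gtrsim 1$ (from $\wt\chi_+$), and $\xi_2<0$; together with $\xi=\xi_1+\xi_2$ these give $\xi_1=\xi+|\xi_2|\geq \max(\xi,|\xi_2|)$. In particular $\jb{\xi}\leq \jb{\xi_1}$, and the lower bound from $\wt\chi_+$ yields $\xi_1^{-1}\les \jb{\xi_1}^{-1}$. The crucial observation is that, as noted after \eqref{Os}, on the support of $\sigma$ we have the identity $\Omega(\xi,\xi_1,\xi_2)=2\xi\xi_2$, so the modulation cutoff $|\Omega|\leq M$ gives
\[
\frac{|\xi\xi_2|}{\xi_1}=\frac{|\Omega|}{2\xi_1}\leq \frac{M}{2\xi_1}\les \frac{M}{\jb{\xi_1}}.
\]
In other words, the localisation in $\Omega$ absorbs the whole bilinear symbol into a factor of $M/\jb{\xi_1}$.

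Setting $F_1(\xi_1)=\jb{\xi_1}^{s}|\ft u_1(\xi_1)|$ and $F_2(\xi_2)=|\ft u_2(\xi_2)|$, and using $\jb{\xi}^{s+\ta}\leq \jb{\xi_1}^{s+\ta}$, I would deduce the pointwise bound
\[
\jb{\xi}^{s+\ta}|\F\{\mN^{(1)}_{\leq M}(u_1,u_2)\}(\xi)|\les M\int \jb{\xi_1}^{\ta-1}F_1(\xi_1)F_2(\xi-\xi_1)\,d\xi_1 = M\bigl((\jb{\cdot}^{\ta-1}F_1)* F_2\bigr)(\xi).
\]
Taking $L^2_\xi$ norms, Young's convolution inequality ($L^1*L^2\to L^2$) followed by Cauchy--Schwarz gives
\[
\|\mN^{(1)}_{\leq M}(u_1,u_2)\|_{H^{s+\ta}}\les M\,\|\jb{\cdot}^{\ta-1}F_1\|_{L^1}\|F_2\|_{L^2}\les M\,\|\jb{\cdot}^{\ta-1}\|_{L^2}\|F_1\|_{L^2}\|F_2\|_{L^2}.
\]
The weight $\jb{\cdot}^{\ta-1}$ belongs to $L^{2}(\R)$ precisely when $2(\ta-1)<-1$, i.e.\ $\ta<\tfrac12$, which matches the hypothesis. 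Recalling $\|F_1\|_{L^2}=\|u_1\|_{H^s}$ and $\|F_2\|_{L^2}=\|u_2\|_{L^2}$ completes the proof.

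There is no real obstacle here: the entire argument boils down to spotting the identity $\xi\xi_2/\xi_1=\Omega/(2\xi_1)$ on the support of $\sigma$ and then carrying out an elementary $L^1*L^2\to L^2$ convolution bound. The restriction $\ta<\tfrac12$ is sharp in this argument, corresponding exactly to the borderline integrability of the kernel $\jb{\xi_1}^{2(\ta-1)}$ on $\R$.
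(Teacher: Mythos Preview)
Your proof is correct and is essentially the standard argument: the paper does not give its own proof but cites \cite[Lemma 3.4]{MP-2023}, and the approach there is the same—use $\Omega=2\xi\xi_2$ on $\supp\sigma$ together with $|\Omega|\le M$ to replace the symbol $\xi\xi_2/\xi_1$ by $O(M/\jb{\xi_1})$, then close with a Young/Cauchy--Schwarz convolution estimate exploiting $\jb{\cdot}^{\ta-1}\in L^2$ for $\ta<\tfrac12$.
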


There is no such control on $\mN^{(1)}_{>M}$ and thus we proceed by an integration by parts in time which gains us a factor of $|\O|^{-1}$.
 For this purpose, we introduce the operator
 $\mN^{(1)}_0$ by 
\begin{align*}
 \F \{ \mN^{(1)}_0(u_1,u_2)\}(t,\xi) 
&  =  \int_{\xi=\xi_{12}} 
e^{-it \Omega(\xi,\xi_1,\xi_2) }  \frac{1}{\xi_1} \ind_{\{|\Omega| > M\}}~
 \sigma(\xi,\xi_1,\xi_2)   
\ft{u}_1 (t,\xi_1)\ft{u}_2 (t,\xi_2) d\xi_1.
\end{align*}
Formally, integration by parts in time gives
\begin{align}
\begin{split}
 &\int_0^t \F \{ \mN_{>M}^{(1)}(\tw,\tv) \} (t',\xi) dt'\\
 &= 
\F \{ \mN^{(1)}_0(\tw,\tv) \}(t',\xi)
\Big\vert^{t'=t}_{t'=0} 
 - \int_0^t  
 \int_{\xi=\xi_{12}} 
  \frac{\s e^{-it' \Omega }}{\xi_1} \ind_{\{|\Omega| > M\}}~
 \partial_{t'} \big[  \ft{\tw}(t',\xi_1) \ft{\tv}(t',\xi_2) \big]d\xi_1 dt' \\
 & = \F \{ \mN^{(1)}_0(\tw,\tv) \}(t',\xi)
\Big\vert^{t'=t}_{t'=0}  - \int_0^t  
\F \{ \mN^{(1)}_0(\dtp \tw,\tv)\} (t',\xi)+
\F \{ \mN^{(1)}_0(\tw, \dtp \tv)\}(t',\xi)  dt'.
\end{split} \label{ibp-1-inf}
\end{align}
We can argue as in \cite[subsection 3A]{MP-2023} to justify the first equality in \eqref{ibp-1-inf}; namely, the switching of the $t$ and $\xi_1$ integrals and the use of integration by parts in time. The new difficulty that arises in this paper is in justifying the second equality in \eqref{ibp-1-inf}. Namely, in view of Remark~\ref{RMK:wC1}, we cannot apply the product rule to distribute the time derivative $\partial_{t'}$. To proceed we instead decompose $\ft{\tw}$ into a good and a bad term as in \eqref{dec-w}. For the contribution from the good term, we can apply the product rule and justify the series of equalities in \eqref{ibp-1-inf}. For the bad term, we instead directly prove the identity
\begin{align*}
 &\int_0^t \F \{ \mN_{>M}^{(1)}(B,\tv) \} (t',\xi) dt'\\
  &= \F \{ \mN^{(1)}_0(B,\tv) \}(t',\xi)
\Big\vert^{t'=t}_{t'=0}  - \int_0^t  
\F \{ \mN^{(1)}_0(\dtp B,\tv)\} (t',\xi)+
\F \{ \mN^{(1)}_0(B, \dtp \tv)\}(t',\xi)  dt',
\end{align*}
where $B$ is the bad part of $w$, thus bypassing the first equality in \eqref{ibp-1-inf}. This idea was inspired by \cite[Appendix A]{K19}.  We will also need to follow a similar justification procedure for the second round of integration by parts in Section~\ref{SEC:2nd}.

We now continue with the first step of the normal form reductions.
As in
\cite[subsection 3A]{MP-2023},
we first need to switch the order of integrations in $\xi_1$ and $t'$. We justify this use of Fubini's theorem by using dyadic decompositions. We write $w_{N_1}:=\P_{N_1} w$ and $v_{N_2}:=\P_{N_2}v$, where $\ft w = \sum_{N_1} \ft w_{N_1} $ and $ \ft v = \sum_{N_2} \ft v_{N_2}$
and consider
\begin{align}  
\begin{split}
& \int_0^t \F \{ \mN_{>M}^{(1)}\}(\tw,\tv) (t',\xi) dt' \\
& = -2i \xi \int_0^t \int_{\xi=\xi_{12}}  
e^{-it' \Omega }  
\frac{ \xi_2}{\xi_1} \ind_{\{|\Omega| > M\}}~
 \sigma(\xi,\xi_1,\xi_2)   
 \sum_{N_1} \ft{\tw}_{N_1} (t',\xi_1)
  \sum_{N_2} \ft{\tv}_{N_2} (t',\xi_2) d\xi_1 dt'.  
\end{split} \label{N1M}
\end{align}
The following estimate allows us to justify the switching of the order of the integrals in $\xi_1$ and $t'$ and the summations over $N_1$ and $N_2$: for fixed $\xi\in \R$ and $0<t<T$,   we have
\begin{align}
\begin{split}
  &  \sum_{N_1} \sum_{N_2} \int_0^t \int_{\xi=\xi_{12}}   \Big|
e^{-it' \Omega }  
\frac{\xi_2}{\xi_1} \ind_{\{|\Omega| > M\}}~
 \sigma(\xi,\xi_1,\xi_2)   
  \ft{\tw} (t',\xi_1)
 \ft{\tv} (t',\xi_2)  \Big| d\xi_1 dt' \\
 &  \les T  \sum_{N_1} \sum_{N_2\les N_1}  \frac{N_2}{N_1}\|w_{N_1}\|_{L^\infty_T L^2} \|v_{N_2}\|_{L^\infty_T L^2}\\
& \les  T \|w\|_{L^\infty_T H^s}\|v\|_{L^\infty_T L^2},
\end{split}
\label{FB-1}
\end{align}
for any $s>0$.
Hence, by Fubini's theorem, for fixed $\xi,t$,  we have
\begin{align}
  \begin{split}
\text{LHS} \eqref{N1M} 
& = -2i \xi  \sum_{N_1, N_2}
\intt_{\xi=\xi_{12}} \int_0^t
e^{-it' \Omega }  
\frac{\xi_2}{\xi_1} \ind_{\{|\Omega| > M\}} \s\,
  \ft{\tw}_{N_1} (t',\xi_1)
 \ft{\tv}_{N_2} (t',\xi_2) dt' d\xi_1.   
\end{split} \label{N>M-sum}
\end{align}
We decompose $\tw$ as 
\begin{align}  
 \tw= G+ B,\label{dec-w}
\end{align}
where
\begin{align}
\ft{G}(t,\xi)&:= \hspace{-1mm} -2i  \int_0^t\intt_{\xi=\xi_{12}} 
e^{-it' \Omega } 
 \tfrac{\xi \xi_2}{\xi_1}
 \sigma(\xi,\xi_1,\xi_2)   
\ft{\tw} (t',\xi_1)\ft{\tv} (t',\xi_2) d\xi_1
 dt'  \notag\\
 &\hphantom{XXXX}+ \int_0^t e^{- it' \xi^2} \ft E_1(e^{iF},v)(t',\xi) dt', \label{G}\\
  B(t,x)&:=\int_0^t  e^{it' \dx^2} E_2(e^{iF}, v)(t',x)d t', \label{B}
\end{align}
and $E_1$ and $E_2$ were defined in \eqref{E1E2}.
Then, arguing as in Lemma~\ref{conti}, the first term in the definition of $\ft G(t,\xi)$  belongs to $C^{1}([0,T])$ for each fixed $\xi\in \R$. For the second term in $\ft{G}(t,\xi)$, we note that \eqref{E11} and \eqref{freqE1} imply that $\ft E_1(e^{iF},v)(t',\xi)\in C_{T}L^{\infty}_{\xi}$. Thus,  $\ft G(t,\xi) \in C^{1}([0,T])$ for each fixed $\xi\in \R$.
 Hence, we can apply the integration by parts for the contribution with $G$ and obtain that
\begin{align}
\begin{split}
 &  
 \int_0^t \F \{ \mN_{>M}^{(1)}\}(G,\tv) (t',\xi) dt'\\
& = \sum_{N_1, N_2} 
\bigg\{
\F\{
\mN^{(1)}_0(G_{N_1},\tv_{N_2})\}(t',\xi) \Big\vert^{t'=t}_{t'=0} \\
& \quad \quad 
+  \int_{\xi=\xi_{12}}
\int_0^t  e^{-it' \Omega }  
\frac{1}{\xi_1} \ind_{\{|\Omega| > M\}}~
 \sigma(\xi,\xi_1,\xi_2)   
  \dt \big(\ft{G}_{N_1} (t',\xi_1)
 \ft{\tv}_{N_2} (t',\xi_2) \big) 
 dt' d\xi_1 \bigg\},
\end{split}
 \label{ibp-1'}
 \end{align}
 where $G_{N}:=\P_{N} G.$
Here, using an estimate (for $t'=0,t$) analogous to \eqref{FB-1},
 we see that the first term on the right-hand side of \eqref{ibp-1'}
 is absolutely convergent. Moreover, by Lemma~\ref{conti}, we have $\ft{\tv}(t,\xi), \ft G(t,\xi)\in C^{1}_{T}$ for each fixed $\xi\in \R$, and thus 
\begin{align}
\begin{split}
   \text{LHS}\eqref{ibp-1'}=\F\{ \mN^{(1)}_0(G,\tv)\}(t',\xi)  
\Big\vert^{t'=t}_{t'=0}  &+ 
\sum_{N_1,N_2}
 \int_0^t  
\F \{ \mN^{(1)}_0(\dtp G_{N_1},\tv_{N_2})\}(t',\xi)  dt'
\\
&
+ \sum_{N_1, N_2}  \int_0^t 
\F \{\mN^{(1)}_0(G_{N_1}, \dtp \tv_{N_2})\}(t',\xi)  dt'. 
\end{split}
\label{ibp-1-1}
\end{align}
For the bad contribution in \eqref{N>M-sum}, we need the following identity.

\begin{lemma}\label{IBP-1} 
Let $v\in C_{T}L^2$ be a solution to \eqref{eq-tv}. For $N_1,N_2\in 2^{\mathbb{N}}$, define $ \tv_{N_2} = \P_{N_2}e^{-t\H \dx^2}v(t)$ and $B_{N_1}:=\P_{N_1}B$, where $B\in C_{T} L^2$ is defined in \eqref{B}.
Then, for any $t_1,t_2\in [0,T]$, $M>0$,  
and fixed $\xi\in \R$,
\begin{align} 
  \int_{t_1}^{t_2} 
  \int_{\xi=\xi_{12}} e^{-it \Omega } f(t) d\xi_1 dt 
  =  \bigg[ \int_{\xi=\xi_{12}}
   \frac{e^{-it \Omega} }{-i \Omega } f(t)d\xi_1 
   \bigg]
  \bigg \vert^{t=t_2}_{t=t_1} 
  -\int_{t_1}^{t_2} \int_{\xi=\xi_{12}}  
  \frac{e^{-it \Omega} }{-i \Omega } g(t) d\xi_1 dt, \label{ibp-1-B}
\end{align}
\noi where 
\begin{align*}
f(t)&= -2i  \ind_{\{\xi=\xi_{12}\}}
\tfrac{\xi_2}{\xi_1} \ind_{\{|\Omega| > M\}}~
 \sigma(\xi,\xi_1,\xi_2)   
  \ft{B}_{N_1} (t,\xi_1)
 \ft{\tv}_{N_2} (t,\xi_2), \\
  \begin{split}
    g(t) & = -2i \ind_{\{\xi=\xi_{12}\}} 
\tfrac{\xi_2}{\xi_1} \ind_{\{|\Omega| > M\}}
 \sigma(\xi,\xi_1,\xi_2)    \big[ 
  \ft{B}_{N_1} (t,\xi_1)
  \dt \ft{\tv}_{N_2} (t,\xi_2) \\
  &\hphantom{XXXXXXXXXXXXXXXXXXXXX}+\F\{\P_{N_1} e^{it\dx^2}E_2(e^{iF},v)\}  (t,\xi_1)
 \ft{\tv}_{N_2} (t,\xi_2)   \big],
  \end{split}
\end{align*}
and $\Omega$ and $\s$ are defined in \eqref{Os}. \end{lemma}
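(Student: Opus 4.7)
\noi\textbf{Proof plan for Lemma~\ref{IBP-1}.}
The obstruction, as highlighted in Remark~\ref{RMK:wC1}, is that $\hat B_{N_1}(t,\xi_1)$ is merely $L^{2}_{\xi_1}$-continuous in $t$ and is not known to be pointwise $C^{1}$ in $t$ for each fixed $\xi_{1}$. Consequently, the usual two-step argument---IBP in $t$ followed by the pointwise Leibniz rule as in \eqref{ibp-1-inf}---is unavailable for the $B$-contribution, and I intend to establish \eqref{ibp-1-B} directly. The plan is to unfold the primitive definition
\[
  \hat B_{N_1}(t,\xi_1) = \psi_{N_1}(\xi_1)\int_{0}^{t} e^{-it''\xi_1^{2}}\,\F\{E_{2}(e^{iF},v)\}(t'',\xi_{1})\,dt'',
\]
substitute it into the left-hand side of \eqref{ibp-1-B}, and thereby rewrite that left-hand side as a triple integral in $(t,t'',\xi_{1})$. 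After this substitution the only factors with explicit $t$-dependence are the oscillatory phase $e^{-it\Omega}$ and $\hat{\tv}_{N_2}(t,\xi_{2})$, both of which are pointwise $C^{1}$ in $t$ (the latter by Lemma~\ref{conti}).

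The next step is to apply Fubini's theorem to interchange the $dt$, $dt''$ and $d\xi_{1}$ integrals. This is justified by an absolute-integrability estimate in the spirit of \eqref{FB-1}: the cutoffs $\psi_{N_1}\psi_{N_2}$ localise $|\xi_1|\sim N_1$, $|\xi_2|\sim N_2$, and the bounds $\|B\|_{C_T L^{2}}$, $\|\tv\|_{C_T L^{2}}<\infty$---combined with Lemma~\ref{est-error} and Lemma~\ref{Q-est} to control $E_{2}(e^{iF},v)$ in $L^{\infty}_{t}L^{2}$---provide the required bound. After the interchange I would split the $(t,t'')$-region $\{t_1\le t\le t_2,\;0\le t''\le t\}$ into the two pieces
\[
  \{0\le t''\le t_1,\; t_1\le t\le t_2\} \quad\text{and}\quad \{t_1<t''\le t_2,\; t''\le t\le t_2\}.
\]

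On each piece, for fixed $(t'',\xi_{1})$, the inner $t$-integral reduces to $\int_{a(t'')}^{t_2} e^{-it\Omega}\hat{\tv}_{N_2}(t,\xi_{2})\,dt$ with $a(t'')=\max(t_1,t'')$, and the classical identity $e^{-it\Omega} = (-i\Omega)^{-1}\dt e^{-it\Omega}$ can be used in the pointwise sense, which is legitimate because $|\Omega|>M>0$ on the relevant support and $\hat{\tv}_{N_2}(\cdot,\xi_{2})\in C^{1}_T$ by Lemma~\ref{conti}. Reassembling the two domain pieces, the boundary contributions at $t=t_1$ and $t=t_2$---upon restoring the $t''$-integral so as to reconstruct $\hat B_{N_1}$ via its defining primitive---collapse into the boundary term $\big[\int \frac{e^{-it\Omega}}{-i\Omega}f(t)\,d\xi_1\big]_{t=t_1}^{t=t_2}$ appearing in \eqref{ibp-1-B}. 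The boundary contribution at $t=t''$ (which only appears in the second piece) supplies, after one more Fubini interchange, the $\F\{\P_{N_1}E_{2}(e^{iF},v)\}$-term of $g(t)$, while the interior term coming from $\dt \hat{\tv}_{N_2}$ supplies the $\hat B_{N_1}\,\dt\hat{\tv}_{N_2}$-term of $g(t)$.

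The main obstacle is the careful bookkeeping across the two Fubini interchanges, the $(t,t'')$-domain split, and the reconstruction of $\hat B_{N_1}$ from its defining primitive after the inner IBP; no new multilinear or Strichartz estimates are required beyond what is already available. This direct-identity strategy is a variant of the approach in \cite[Appendix A]{K19} and, as flagged in the discussion preceding the lemma, will be reused with one further level of iteration in the second-round normal form reduction of Lemma~\ref{IBP-2}.
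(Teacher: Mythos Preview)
Your proposal is correct and leads to \eqref{ibp-1-B}, but it proceeds along a genuinely different route from the paper. The paper follows \cite[Appendix A]{K19} via a \emph{finite-difference approximation}: one writes the discrete identity
\[
\int_{t_1+h}^{t_2}\!\!\int_{\xi=\xi_{12}}\frac{e^{-it\Omega}-e^{-i(t-h)\Omega}}{ih\Omega}f(t)\,d\xi_1\,dt
=\text{(three terms)}
\]
and then proves three separate limits as $h\to 0$ (the analogues of \eqref{lm-1}--\eqref{lm-3}) by dominated convergence, using that $f\in C_TL^1_{\xi_1}$, that $E_2(e^{iF},v)\in C_TL^2$, and that $\dt\ft{\tv}_{N_2}\in C_TL^2$. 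Your approach instead \emph{unfolds} the explicit primitive structure $\ft{B}_{N_1}(t,\xi_1)=\psi_{N_1}(\xi_1)\int_0^t e^{-it''\xi_1^2}\ft{E_2}(t'',\xi_1)\,dt''$, applies Fubini to pass to a triple integral in $(t,t'',\xi_1)$, splits the $(t,t'')$-domain, and performs a genuine pointwise integration by parts in $t$ against $\ft{\tv}_{N_2}$ (legitimate by Lemma~\ref{conti}). The reassembly you describe is accurate: the boundaries at $t=t_1,t_2$ rebuild $\ft{B}_{N_1}$ and give the boundary term, the boundary at $t=t''$ produces the $E_2$-part of $g$, and the interior gives the $\dt\ft{\tv}_{N_2}$-part. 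Your method is more direct here and avoids the three limit arguments, but it leans on the specific primitive form of $B$; the paper's difference-quotient scheme is more agnostic and would apply whenever $D_hf\to g$ in $L^1_{\xi_1}$. One small remark: the paper's method, not your unfolding, is what \cite[Appendix A]{K19} actually does, so your attribution at the end is slightly off.
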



\begin{proof} We adapt the 
approximation arguments from \cite[Appendix A]{K19}. 
For $h\neq 0$ such that $ t_1+h, t_2+h \in[0,T]$, we have the following identity
\begin{align}
\begin{split}
&    \int_{t_1+h}^{t_2} \int_{\xi=\xi_{12}}
  \frac{e^{-it \Omega} -e^{-i(t-h) \Omega} }{ih \Omega } f(t) d\xi_1  dt\\
&   =
  \frac{1}{h} \int_{t_2}^{t_2+h}  \int_{\xi=\xi_{12}}
  \frac{e^{-i(t-h) \Omega} }{i \Omega }  f(t) d\xi_1  dt
  - 
  \int_{t_1}^{t_1+h}\int_{\xi=\xi_{12}}
  \frac{e^{-it \Omega} }{i \Omega }  f(t) d\xi_1  dt \\
  & \quad  -\int_{t_1}^{t_2} \int_{\xi=\xi_{12}}
\frac{ e^{-it \Omega}}{i \Omega }
  \frac{ f(t+h)-f(t)  }{ h}  d\xi_1 dt. \\ 
\end{split}\label{IDT}
\end{align}
To prove \eqref{ibp-1-B}, we take limits in \eqref{IDT} as $h\to 0$ by proving the following:
 \begin{align}
\lim_{h\to 0}\int_{t_1+h}^{t_2} \int_{\xi=\xi_{12}}
  \frac{e^{-it \Omega} -e^{-i(t-h) \Omega} }{ih \Omega } f(t) d\xi_1  dt & = 
  \int_{t_1}^{t_2} 
  \int_{\xi=\xi_{12}} e^{-it \Omega } f(t) d\xi_1 dt, \label{lm-1} \\
  \lim_{h\to 0} \bigg[
\int_{t_0}^{t_0+h}  \int_{\xi=\xi_{12}}
  \frac{e^{-i(t-h) \Omega} }{ih \Omega }  f(t) d\xi_1  dt \bigg]
\bigg \vert^{t_0=t_2}_{t_0=t_1} 
    &=  \bigg[
   \int_{\xi=\xi_{12}}
   \frac{e^{-it \Omega} }{-i \Omega } f(t)d\xi_1 \bigg]
  \bigg \vert^{t=t_2}_{t=t_1}, 
   \label{lm-2} \\
  \lim_{h\to0} \int_{t_1}^{t_2} \int_{\xi=\xi_{12}}
\frac{ e^{-it \Omega}}{i \Omega }
  \frac{ f(t+h)-f(t)  }{ h}  d\xi_1 dt &=
   \int_{t_1}^{t_2} \int_{\xi=\xi_{12}}  
  \frac{e^{-it \Omega} }{-i \Omega } g(t) d\xi_1 dt.\label{lm-3}
 \end{align}
First, we verify \eqref{lm-1}.
As
$B_{N_1},v_{N_2}\in C_T L^2$ and $|\xi_1|\ges |\xi_2| $ in the support of $\sigma$, we have 
$f \in C_T L^1_{\xi_1} .$
%
\noi
Thus, by the dominated convergence theorem, we have
\begin{align*}
  \int_{\xi=\xi_{12} }
\frac{ e^{-it \Omega} -e^{-i(t-h) \Omega}  }{ih \Omega } 
  f(t) d\xi_1  \too  \int_{\xi=\xi_{12}} e^{-it \Omega } f(t) d\xi_1, \text{ as  } h\to 0. 
\end{align*} 
 for each fixed $\xi\in \R$ and $t\in [t_1,t_2]$.
\noi
Applying the dominated convergence theorem again we obtain \eqref{lm-1}.
For the limit \eqref{lm-2}, it suffices to show that for fixed $t_0\in [0,T],$  
\begin{align*} 
&\lim_{h\to 0}   \int_{\xi=\xi_{12}} \Big( \int_{t_0}^{t_0+h}
  \frac{e^{-i(t-h) \Omega} }{ih \Omega }  f(t) dt  -
   \frac{e^{-i t_0 \Omega} }{i \Omega }  f(t_0)
   \Big) d\xi_1 =0.
\end{align*}
  Since $f\in C_T L^1_{\xi_1}$, we have
\begin{align*}
 \bigg| \int_{\xi=\xi_{12}}  \int_{t_0}^{t_0+h}  
   \frac{e^{-i t_0 \Omega} }{i  h\Omega }
   \big(  f(t) -f(t_0)  \big)  dt 
d\xi_1 \bigg| &\les  \|f(t) -f(t_0)\|_{L^\infty(( t_0,t_0+h);L^1_{\xi_1})} \too 0, \\
    \bigg|     \int_{\xi=\xi_{12}} 
  \int_{t_0}^{t_0+h} 
  \Big(
  \frac{  e^{-i(t-h) \Omega} -e^{-i t_0 \Omega} }{ih \Omega }  
   \Big)   f(t) 
d\xi_1  dt  \bigg|  &\les h
\|  f\|_{C_TL^1_{\xi_1} } \too 0,
  \end{align*} 
 as $h \to 0$. Thus, the limit \eqref{lm-2} holds true.
Finally, we consider the limit \eqref{lm-3}. For $h\neq 0$, let $D_h$ denote the difference quotient operator, that is,
$D_h f(t)=\tfrac{f(t+h)-f(t)}{h}.$ 
By using the facts $E_2(e^{if},v) \in C_T L^2$ and $\tv\in C_T L^2,$ for fixed $\xi\in \R$ and any $t\in [0,T]$, as well as the identity $D_{h} [ f_1 f_2]  = D_{h}[f_1]f_2 + f_1 D_h [f_2]$
for any two functions $f_1,f_2$,  we have for sufficiently small $h$,
\begin{align*} 
\begin{split}
   \bigg| \intt_{\xi=\xi_{12}}
\frac{ e^{-it \Omega}}{i \Omega }
D_h f(t)  d\xi_1 
  \bigg| 
   &\les  
  \frac{1}{h}\int_t^{t+h} \| E_2(e^{iF},v)(t')\|_{L^2} \|v_{N_2}(t')\|_{L^2} +  \|\dtp v_{N_2}(t')\|_{L^2}  \| B(t')\|_{L^2}dt' 
   \\
   & \les
\|v\|_{C_TL^2} \|E_2(e^{iF},v) \|_{C_TL^2}+ \|\dt \tv_{N_2}\|_{C_T L^2}  \|B\|_{C_TL^2} 
\\
&\les \|v\|_{C_TL^2} \|E_2(e^{iF},v) \|_{C_TL^2}+ N_2^{\frac32} (1 +\dl^{-1}+ \| v \|_{L^\infty_T L^2})^2 \|B\|_{C_T L^2},
 \end{split}
\end{align*}
where we used \eqref{eq-tv}, and the Bernstein inequality in the third inequality.
\noi
Then, by the dominated convergence theorem, it suffices to show that for each fixed $\xi \in \R$, we have
\begin{align}
\lim_{h\to 0}  \int_{\xi=\xi_{12}}  -2i  \tfrac{ e^{-it \Omega}}{i \Omega }
\tfrac{\xi_2}{\xi_1} \ind_{\{|\Omega| > M\}}~
 \sigma
   \big\{ D_h \ft{B}_{N_1} (t,\xi_1) - e^{-it\xi_1^2}\ft E_2(e^{iF},v)(t,\xi_1) \big\}
 \ft{\tv}_{N_2} (t,\xi_2) d\xi_{1} &=0, \label{II-1} \\
 \lim_{h\to 0} \int_{\xi=\xi_{12}}  -2i  \tfrac{ e^{-it \Omega}}{i \Omega }
\tfrac{ \xi_2}{\xi_1} \ind_{\{|\Omega| > M\}}~
 \sigma
 \ft{B}_{N_1} (t,\xi_1)
 \big\{  D_h \ft{\tv}_{N_2}(t,\xi_2) - \dt \ft{\tv}_{N_2}(t,\xi_2)
 \big\}
  d\xi_{1}  &= 0.\label{II-2}
\end{align}
By using that $E_2(e^{iF},v) \in C_T L^2$ and $\tv\in C_T L^2,$ for fixed $\xi\in \R$ and $t\in [0,T]$, we have  
\begin{align*}
  \bigg|  \int_{\xi=\xi_{12}}&  -2i \frac{ e^{-it \Omega}}{i \Omega } 
\frac{ \xi_2}{\xi_1}\ind_{\{|\Omega| > M\}}~
 \sigma(\xi,\xi_1,\xi_2)   
   \big\{ D_h \ft{B}_{N_1} (t,\xi_1) - \ft E_2(e^{iF},v)(t,\xi_1) \big\}
 \ft{\tv}_{N_2} (t,\xi_2) d\xi_{1} \bigg| \\
 & \les 
  \|\tv\|_{C_T L^2}
 \frac1{h} \int_{t}^{t+h}  \|E_2(e^{iF},v)(t',x)-e^{-it \xi_1^2}E_2(e^{iF},v)(t,x)\|_{L^2} dt' \too 0,
\end{align*}
as $h \to 0$, which proves \eqref{II-1}.
For \eqref{II-2}, by Lemma \ref{conti},
 the equation \eqref{eq-tv}, $\tv\in C_TL^2$, and the Bernstein inequality, it holds that 
\begin{align*}
\|
  D_h \ft{\tv}_{N_2}(t,\xi) - \dt \ft{\tv}_{N_2}(t,\xi)
 \|_{L^2_{\xi }} & \leq 
\frac{1}{h} 
 \bigg\|\int_t^{t+h}
\big( \dtp   \ft{\tv}_{N_2}(t',\xi) - \dt \ft{\tv}_{N_2}(t,\xi) \big) dt'
 \bigg\|_{L^{2}_{\xi}} \\
& \les  
 (\dl^{-2}+ N_2^{\frac32}\|v\|_{C_TL^2}) 
 \frac{1}{h}
  \int_t^{t+h} \|v(t)-v(t')\|_{L^2} dt'
  \too 0,
\end{align*}  
as $h\to 0.$
Thus, since $E_2 \in C_T L^2$, we get \eqref{II-2}.
This completes the proof of \eqref{ibp-1-B}.
\end{proof}

Thus, by Lemma~\ref{IBP-1}, we obtain
\begin{align}
\begin{split}
 &
 \int_0^t \F\{ \mN_{>M}^{(1)}\}(B,\tv) (t',\xi) dt'\\
 & = \sum_{N_1, N_2} 
\F\{
\mN^{(1)}_0(B_{N_1},\tv_{N_2})\}(t',\xi) \Big\vert^{t'=t}_{t'=0} \\
& \quad 
+  \sum_{N_1, N_2}   \int_{\xi=\xi_{12}}
\int_0^t  
\frac{\s e^{-it' \Omega }  }{\xi_1} \ind_{\{|\Omega| > M\}} 
   \F\{ 
  \P_{N_1} e^{it' \dx^2}E_2(e^{iF}, v)\}(t',\xi_1)
 \ft{\tv}_{N_2} (t',\xi_2) 
d\xi_1 dt'  \\
 & \quad 
+ \sum_{N_1, N_2}   \int_{\xi=\xi_{12}}
\int_0^t    
\frac{\s e^{-it' \Omega }  }{\xi_1} \ind_{\{|\Omega| > M\}}\ft{B}_{N_1} (t',\xi_1)   \dtp
 \ft{\tv}_{N_2} (t',\xi_2) 
d\xi_1   dt' .
\end{split}
 \label{ibp-1-2}
 \end{align} 
Finally, combining \eqref{ibp-1-1}, \eqref{ibp-1-2}, and using \eqref{dec-w}, we now formally have the identity
\begin{align}
\begin{split}
  \int_0^t \F \{\mN_{>M}^{(1)}\}&(\tw,\tv) (t',\xi) dt'\\
&=\F\{ \mN^{(1)}_0(\tw,\tv)\}(t',\xi)  
\Big\vert^{t'=t}_{t'=0}  + 
\sum_{N_1, N_2} 
 \int_0^t  
\F \{ \mN^{(1)}_0(  \dtp \tw_{N_1}  
  ,\tv_{N_2})\}(t',\xi)  dt'
\\
& \hphantom{XXXXXXXXXXXXX}  
+ \sum_{N_1,N_2}  \int_0^t 
\F ( \mN^{(1)}_0 ( \tw_{N_1} , \dtp \tv_{N_2}))(t',\xi)  dt'.
\end{split}
\label{ibp-1}
\end{align}
The boundary term in \eqref{ibp-1} admits a good estimate. See \cite[Lemma 3.6]{MP-2023}.
\begin{lemma}\label{bd-est-1}
Let $s\geq 0$ and $0\leq \ta<\frac12$. Then, for each fixed time $t\in [0,T]$, 
\begin{align*}
\| \mN^{(1)}_0(u_1,u_2) \|_{H^{s+\ta} }
 \les 
 M^{-\frac18+\frac{\ta}{4}} 
 \|u_1\|_{H^s} \|u_2\|_{L^2}.
\end{align*}
\end{lemma}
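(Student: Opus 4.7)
The plan is to reduce the bound to a pure frequency-space measure computation, via Plancherel and a weighted Cauchy--Schwarz, and then to exploit the modulation cutoff $|\Omega|>M$ together with the support condition on $\sigma$.

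By Plancherel's theorem, it suffices to control the $L^2_\xi$-norm of $\langle\xi\rangle^{s+\theta}|\widehat{\mN^{(1)}_0}(\xi)|$. For fixed $\xi$, I would split the kernel as
\[
\frac{\sigma(\xi,\xi_1,\xi_2)\,\ind_{|\Omega|>M}}{\xi_1} \;=\; \frac{\sigma\,\ind_{|\Omega|>M}}{\xi_1\,\langle\xi_1\rangle^s}\cdot \langle\xi_1\rangle^s
\]
and apply Cauchy--Schwarz in $\xi_1$, so as to peel off $\langle\xi_1\rangle^s|\widehat{u_1}(\xi_1)|$ and $|\widehat{u_2}(\xi-\xi_1)|$. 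A Fubini step on the pointwise-in-$\xi_1$ factor $|\widehat u_1(\xi_1)|^2\langle\xi_1\rangle^{2s}|\widehat u_2(\xi-\xi_1)|^2$ then yields
\[
\|\mN^{(1)}_0(u_1,u_2)\|_{H^{s+\theta}}^2 \;\lesssim\; \Big(\sup_{\xi} I(\xi)\Big)\,\|u_1\|_{H^s}^2\|u_2\|_{L^2}^2,
\]
with
\[
I(\xi):=\langle\xi\rangle^{2(s+\theta)}\int \frac{\sigma(\xi,\xi_1,\xi_2)\,\ind_{|\Omega|>M}}{\xi_1^2\,\langle\xi_1\rangle^{2s}}\,d\xi_1.
\]
The lemma thus reduces to showing $\sup_\xi I(\xi)\lesssim M^{-1/4+\theta/2}$.

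To estimate $I(\xi)$, I would use the explicit structure of $\sigma$ and $\Omega$. On the support of $\sigma$ one has $\xi>0$, $\xi_1>2^{-3}$, $\xi_2<0$, so $\xi_1=\xi+|\xi_2|$ and $\Omega=-2\xi|\xi_2|$. The cutoff $|\Omega|>M$ then reads $|\xi_2|>M/(2\xi)$, equivalently $\xi_1>\xi+M/(2\xi)$. Direct integration gives
\[
\int_{\xi+M/(2\xi)}^{\infty} \frac{d\xi_1}{\xi_1^{2+2s}}\;\lesssim\;\Big(\xi+\tfrac{M}{2\xi}\Big)^{-1-2s},
\]
so $I(\xi)\lesssim \langle\xi\rangle^{2(s+\theta)}(\xi+M/(2\xi))^{-1-2s}$. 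I would then split into two cases. If $\xi^2\gtrsim M$, the $\xi$-term dominates in the bracket, giving $I(\xi)\lesssim \xi^{-1+2\theta}$; since $\theta<1/2$ this is decreasing in $\xi$, and its supremum over $\xi\gtrsim\sqrt M$ is $\lesssim M^{\theta-1/2}$. If $\xi^2\lesssim M$, the $M/(2\xi)$-term dominates, giving $I(\xi)\lesssim \xi^{1+4s+2\theta}M^{-1-2s}$; the supremum over $\xi\lesssim\sqrt M$ is again $\lesssim M^{\theta-1/2}$ (the small-$\xi$ case $\xi\lesssim 1$ is even more favourable, giving $M^{-1-2s}$). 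Combining, $\sup_\xi I(\xi)\lesssim M^{\theta-1/2}$, and since $\theta\le 1/2$ we have $M^{\theta-1/2}\le M^{-1/4+\theta/2}$, which completes the proof after taking square roots.

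The main point of care is the two-case balancing of the weight $\langle\xi\rangle^{2(s+\theta)}$ against the decay $(\xi+M/(2\xi))^{-1-2s}$; everything else (Plancherel, Cauchy--Schwarz, Fubini) is essentially bookkeeping. The hypothesis $\theta<1/2$ enters precisely to keep the high-frequency integral finite. I remark that the argument in fact produces the stronger bound $M^{-1/4+\theta/2}$, of which the stated $M^{-1/8+\theta/4}$ is a weakening; the stated form is what is needed in the subsequent normal-form analysis.
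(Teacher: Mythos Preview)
Your proof is correct and follows the natural Cauchy--Schwarz/Schur-kernel approach; the paper does not give its own argument but refers to \cite[Lemma~3.6]{MP-2023}, and your method is essentially the same as the standard one expected there. As you observe, your computation in fact yields the sharper exponent $M^{-\frac14+\frac{\theta}{2}}$, of which the stated $M^{-\frac18+\frac{\theta}{4}}$ is a weakening (the latter suffices for the applications in the paper).
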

 As pointed out in \cite[Remark 3.5]{MP-2023}, the estimates in Lemma \ref{dt-est} are not good enough to ensure that the last two terms on RHS \eqref{ibp-1} are absolutely summable in $N_1$. 
However, we continue to proceed with the normal form reductions for each fixed $N_1$ and $N_2$. Eventually, we will be able to perform the summation in $N_1$ and thus make fully rigorous the justification of the normal form equation we obtain. See the proof of Proposition~\ref{PROP:wtov}.

We now substitute the equations 
\eqref{eq-tw}, and \eqref{eq-tv} for  $\dt \tw$, and $\dt \tv$, respectively, 
into the last two terms of  \eqref{ibp-1}.
First, we consider the substitution of $\dtp \tw$ 
in \eqref{eq-tw}, which is justified by Lemma \ref{bd-est-1} and Lemma \ref{dt-est}. We will rewrite the frequency $N_1$ of $\tw$ as $N_{12}$
and  the frequency $N_2$ of $\tv$ as $N_3$.
For fixed $N_{12},N_3$ and $\xi\in \R$, the term
 $\F \{\mN^{(1)}_0(\dtp \tw_{N_{12} }, \tv_{N_3}  ) \}(t',\xi) dt' $
in \eqref{ibp-1} is equal to
\begin{align}
 & \quad  
 \int_{\xi=\eta+\xi_3} \int_0^t 
  e^{-it' \Omega(\xi,\eta,\xi_3)} 
  \frac{\ind_{\{|\Omega|>M\}}}{\eta}
  \sigma(\xi,\eta,\xi_3) 
  \dtp \ft \tw_{N_{12}}(t',\eta) 
 \ft \tv_{N_3}(t',\xi_3) dt' d\eta  \notag\\
 \begin{split}
   &  = \int_{\xi=\eta+\xi_3}  \int_0^t
  e^{-it' \Omega(\xi,\eta,\xi_3)}
   \frac{\ind_{\{|\Omega|>M\}}}{\eta} \sigma(\xi,\eta,\xi_3)
\psi_{N_{12}}(\eta)
   \\
 & \quad \quad \quad \times
  \bigg( 
\int_{\eta=\xi_{12} } 
 e^{-it'\Omega(\eta,\xi_1,\xi_2)}
  \frac{2\eta \xi_2}{i\xi_1} \sigma(\eta,\xi_1,\xi_2) 
  \ft \tw(\xi_1) \ft \tv(\xi_2) 
   d\xi_1 +\ft E(\eta)  
   \bigg) 
    \ft \tv_{N_3}(t',\xi_3) dt' d\eta 
 \end{split}  \label{s-1} \\
 \begin{split}
 &  = \sum_{N_1, N_2} 
    \int_{\xi=\xi_{123}} \int_0^t 
     e^{-it'\Omega^{(2)}_{1}(\xi,\xi_1,\xi_2,\xi_3) } 
    m^{(2)}_{1}(\xi,\xi_1,\xi_2,\xi_3) 
     \ft \tw_{N_1}(\xi_1) \ft\tv_{N_2}(\xi_2)
    \ft\tv_{N_3}(\xi_3) 
  dt'  d\xi_1d\xi_2\\
    & \quad 
    +  \int_{\xi=\eta+\xi_3}  \int_0^t 
     e^{-it' \Omega(\xi,\eta,\xi_3)}
      \frac{\ind_{\{|\Omega|>M\}}}{\eta}
       \sigma(\xi,\eta,\xi_3) \psi_{N_{12}} (\eta)
    \ft E(\eta) \ft \tv_{N_3}(t',\xi_3)dt'  d\eta ,
 \end{split} \label{s-2}
\end{align}
where $E$ was defined in \eqref{Edef}, $\xi_{123}:=\xi_1+\xi_2+\xi_3$, and
\begin{align}
\begin{split}
\Omega^{(2)}_1(\xi,\xi_1,\xi_2,\xi_3)
&:=\Omega(\xi,\xi_{12},\xi_3)
+\Omega(\xi_{12},\xi_2,\xi_3)=2\xi \xi_3+2\xi_{12}\xi_2,  \\
m^{(2)}_1 (\xi,\xi_1,\xi_2,\xi_3) 
&:=-2i\frac{\xi_2}{\xi_1} \ind_{\{|\Omega|>M\}}
\sigma(\xi,\xi_{12},\xi_3) 
\sigma(\xi_{12},\xi_1,\xi_2) 
\psi_{N_{12} }(\xi_{12})
 \prod_{k=1}^3 \psi_{N_k} (\xi_k).
 \end{split} \label{m12O12}
\end{align}
Here, we discuss the rigour of the last equality \eqref{s-2}.
Since, for fixed $t\in [0,T)$ and $\xi \in \R, $ by Cauchy-Schwarz and Lemma \ref{est-error},  
\begin{align*}
 \int_{\xi=\eta+\xi_3}  \int_0^t  \Big|
      \tfrac{\ind_{\{|\Omega|>M\}}}{\eta}
       \sigma(\xi,\eta,\xi_3) \psi_{N_{12}} (\eta)
    \ft E(\eta) \ft \tv(t',\xi_3) \Big|dt'  d\eta  & \les \tfrac{T}{N_{12}}\|E\|_{C_T L^2} \|v\|_{C_T L^2}\\
 & \les \tfrac{T}{N_{12}}(1+\dl^{-1}+ \|v\|_{C_T L^2})^3 \|v\|_{C_T L^2}, 
\end{align*}
so we can decompose
\begin{align}
\begin{split}
   \eqref{s-1}&  = \int_{\xi=\eta+\xi_3}  \int_0^t
  e^{-it'\Omega(\xi,\eta,\xi_3)}
   \frac{\ind_{
   \{|\Omega|>M\}}}{\eta} \sigma(\xi,\eta,\xi_3)
\psi_{N_{12}}(\eta)
   \\
 & \quad \quad \quad \times
\int_{\eta=\xi_{12} } 
 e^{-it'\Omega(\eta,\xi_1,\xi_2)}
  \frac{2\eta \xi_2}{i\xi_1} \sigma(\eta,\xi_1,\xi_2) 
  \ft \tw(\xi_1) \ft \tv(\xi_2) 
   d\xi_1 
    \ft \tv_{N_3}(t',\xi_3) dt' d\eta 
 \end{split}  \label{s-1'}\\
& \quad  +  \int_{\xi=\eta+\xi_3}  \int_0^t 
     e^{-it' \Omega(\xi,\eta,\xi_3)}
      \frac{\ind_{\{|\Omega|>M\}}}{\eta}
       \sigma(\xi,\eta,\xi_3) \psi_{N_{12}} (\eta)
    \ft E(\eta) \ft \tv_{N_3}(t',\xi_3)dt'  d\eta. \notag
  \end{align}
Next,  to rewrite  \eqref{s-1'} as \eqref{s-2}, dyadically decompose \eqref{s-1'} into dyadic intervals of $\xi_1$ and $\xi_2$, and pull the summations outside of the integrals using Fubini's theorem. This hinges on the fact that the sums and integrals in \eqref{s-2} converge absolutely;
see \cite[(3-30), p.306]{MP-2023}.

For fixed dyadics $N_{12},N_1,N_2,N_3$,
 we define the multilinear operator  $\mN^{(2)}_{1}$ by
\begin{align*}
& \quad \F\{\mN^{(2)}_{1}(\tw,\tv,\tv) \}(t,\xi) =
  \int_{\xi=\xi_{123}}
  e^{-it\Omega^{(2)}_{1}  } 
    m^{(2)}_{1}(\xi,\xi_1,\xi_2,\xi_3)
      \ft \tw(\xi_1) \ft\tv(\xi_2)
\ft\tv(\xi_3)
 d\xi_1d\xi_2.   
\end{align*}

Now, we consider the substitution of
\eqref{eq-tv} for $\dt \tv$ in \eqref{ibp-1}. 
In this case, we rewrite $N_2$ as $N_{23}$.
For fixed $N_1$ and $N_{23}$, we have
\begin{align}
\begin{split}
 &\quad  \mN^{(1)}_0 
 (\tw_{N_1},\dt \tv_{N_{23} } )=    \mN^{(1)}_0 (\tw_{N_1}, \P_{N_{23}} 
      e^{-t\H \dx^2 } \dx (v^2)  )
+  \mN^{(1)}_0 (\tw_{N_1}, \P_{N_{23}} 
      \Qdl \dx \tv  ).
\end{split} \label{dtvsub}
\end{align}
Here, the first term on the right-hand side of \eqref{dtvsub} satisfies
\begin{align}
\begin{split}
 &\quad \F \{\mN^{(1)}_0 (\tw_{N_1},\P_{N_{23}} e^{-t\H \dx^2 } \dx (v^2)) \}(t,\xi) \\
  & =   -i \sum_{N_2} \sum_{N_3}   
 \int_{\xi=\xi_{123}}  
  e^{-i \Omega^{(2)}_2(\xi,\xi_1,\xi_2,\xi_3) }   m^{(2)}_*(\xi,\xi_1,\xi_2,\xi_3) \ft \tw_{N_1}(\xi_1) \ft \tv_{N_2}(\xi_2) \ft\tv_{N_3}(\xi_3) d\xi_1 d\xi_2,
\end{split} \label{dtvfirst}
\end{align}
where 
\begin{align}
\Omega^{(2)}_2 &=\Omega(\xi,\xi_1,\xi_{23})+\Omega(\xi_{23},\xi_2,\xi_3), \label{O2} \\
m^{(2)}_*(\xi,\xi_1,\xi_2,\xi_3)
&=\tfrac{\xi_{23}}{\xi_1} 
\ind_{\{|\Omega(\xi,\xi_1,\xi_{23}) |>M\}} 
\sigma(\xi,\xi_1,\xi_{23})
 \psi_{N_{23}}(\xi_{23}) 
\psi_{N_1} (\xi_1)\psi_{N_2} (\xi_2)\psi_{N_3} (\xi_3).  \notag
\end{align}
The justification of \eqref{dtvfirst} follows similar to the justification of \eqref{s-2} and we thus omit it. See \cite[(3.36)]{MP-2023}.
Following \cite{MP-2023}, we decompose
$\{(\xi,\xi_1,\xi_2,\xi_3) \in \R^4\}=R^{(2)}_{\leq M}\cup R_{>M}^{(2)},$ where
\begin{align*}
R^{(2)}_{\leq M}&= \{|\xi_{12}|\leq 1\} \cup \{ |\Omega^{(2)}_2(\xi,\xi_1,\xi_2,\xi_3)|\leq M \} \quad \text{and} \quad R_{>M}^{(2)} := \R^{4} \backslash R_{\leq M}^{(2)}.
  \end{align*}
We define  
\begin{align}
m_{\leq M}^{(2)} = m^{(2)}_*\ind_{R^{(2)}_{\leq M}} \quad \text{and}\quad m_{2}^{(2)}= m^{(2)}_* 
\ind_{R^{(2)}_{>M}} . \label{m22}
\end{align}
For fixed $N_2,N_3$, we define $\mN_{\leq M}^{(2)}$ and
 $\mN_{2}^{(2)}$ by
 \begin{align*}
\F \mN_{\leq M}^{(2)}(v_1,v_2,v_3)(t,\xi) &=
-i \int_{\xi=\xi_{123}}  
  e^{-i \Omega^{(2)}_2(\xi,\xi_1,\xi_2,\xi_3) } 
    m^{(2)}_{\leq M}(\xi,\xi_1,\xi_2,\xi_3)
     \ft \tw(\xi_1) \ft \tv(\xi_2)
      \ft\tv(\xi_3) d\xi_1 d\xi_2,\\
\F \mN_{2}^{(2)}(v_1,v_2,v_3)(t,\xi) &=
-i \int_{\xi=\xi_{123}}  
  e^{-i \Omega^{(2)}_2(\xi,\xi_1,\xi_2,\xi_3) }   m^{(2)}_{2}(\xi,\xi_1,\xi_2,\xi_3) \ft \tw(\xi_1) \ft \tv(\xi_2) \ft\tv(\xi_3) d\xi_1 d\xi_2.
\end{align*}
Therefore, we may write
\begin{align*}
\mN^{(1)}_0 (\tw_{N_1}, \P_{N_{23}} e^{-t\H \dx^2 } 
\dx (v^2)  )
 = \sum_{N_2} \sum_{N_3} \big\{  \mN^{(2)}_{\leq M}(\tw,\tv,\tv) +  \mN^{(2)}_{2}(\tw,\tv,\tv) \big\} . 
\end{align*}
Note that for simplicity we have omitted the subscripts $N_{j}$ on the right-hand side for clarity.
For the contribution due to $\mN^{(2)}_{\leq M}$, we recall the following lemma, see  \cite[Lemma 3.7]{MP-2023}.
\begin{lemma}\label{N-2<M}
If $s\geq 0$ and $0\leq \ta<\min(s,\frac12)$, then we have
\begin{align*}
\| \mN^{(2)}_{\leq M} (v_1, v_2 ,v_3) \|_{H^{s+\ta} }
\les M^\frac32  \| v_1\|_{H^s} \| v_2\|_{H^s} \| v_3\|_{H^s}.
\end{align*}
\end{lemma}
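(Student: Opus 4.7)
The plan is to observe that at this stage of the argument neither the phase $\Omega^{(2)}_2$ nor the multiplier $m^{(2)}_{\leq M}$ involves the depth parameter $\dl$: these are the same multilinear symbols appearing at the corresponding stage of the second normal-form reduction for the gauged BO equation in \cite{MP-2023}. My proposal is therefore to adapt the proof of \cite[Lemma 3.7]{MP-2023} essentially verbatim. Concretely, by Plancherel and duality the bound reduces to controlling, for every $h \in L^2(\R)$ of unit norm, the quadrilinear form
\begin{align*}
I := \int_{\xi=\xi_{123}} \jb{\xi}^{s+\ta}\, \ft h(\xi)\, m^{(2)}_{\leq M}(\xi,\xi_1,\xi_2,\xi_3) \prod_{j=1}^{3} \ft v_j(\xi_j)\, d\xi_1\, d\xi_2\, d\xi.
\end{align*}

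The support factor $\sigma(\xi,\xi_1,\xi_{23})$ forces $\xi>0$, $\xi_1 \gtrsim 1$ and $\xi_{23} < 0$, so $|\xi_{23}|\leq \xi_1$ and in particular $|m^{(2)}_{\leq M}|\leq |\xi_{23}|/\xi_1 \leq 1$ on the support of the integrand. I would then split $R^{(2)}_{\leq M} = \{|\xi_{12}|\leq 1\} \cup \{|\Omega^{(2)}_2|\leq M\}$ and treat the two regions separately. On the first set $\xi_1 \approx -\xi_2$, so $N_1\sim N_2$, and the one-dimensional slab in $\xi_{12}$ is of unit width; Cauchy--Schwarz in $\xi_{12}$ combined with Plancherel in the remaining variables yields an $O(1)$ geometric factor, leaving only a harmless $H^s$ trilinear product. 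On the second set, writing $\xi_3 = \xi - \xi_1 - \xi_2$ gives $\partial_{\xi_2}\Omega^{(2)}_2 = -2\xi_{23}$, so for fixed $\xi,\xi_1$ the $\xi_2$-measure of $\{|\Omega^{(2)}_2|\leq M\}$ is $\lesssim M/|\xi_{23}|$; combining this with the reverse lower bound $|\xi_{23}|\xi_1 \gtrsim M$ forced by the inner indicator $\ind_{\{|\Omega(\xi,\xi_1,\xi_{23})|>M\}}$, the bound $|m^{(2)}_{\leq M}|\lesssim |\xi_{23}|/\xi_1$, and an analogous slicing in $\xi_1$, Cauchy--Schwarz distributes the slab measures across the three integrations to produce the total geometric factor $M^{3/2}$.

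\textbf{Main obstacle.} The delicate step is the dyadic summation over $N_1, N_2, N_3$ after converting the Fourier-side bound back to Sobolev norms via Bernstein-type embeddings. The restriction $\ta < \tfrac{1}{2}$ is precisely what permits absorbing the Bernstein losses on each localized frequency tube, while $\ta < s$ is required to sum the resulting geometric series in the largest dyadic frequency. Since the perturbation operator $\Qdl$ does not contribute to this particular multilinear form, I expect no genuinely new difficulty beyond transcribing the combinatorial bookkeeping of \cite[Lemma 3.7]{MP-2023}.
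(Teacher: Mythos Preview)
Your proposal is correct and matches the paper's approach exactly: the paper does not prove this lemma at all but simply cites \cite[Lemma 3.7]{MP-2023}, having observed (as you do) that the symbol $m^{(2)}_{\leq M}$ and phase $\Omega^{(2)}_2$ are independent of $\dl$ and coincide with the corresponding objects in the BO analysis. Your sketch of the underlying argument from \cite{MP-2023} is a reasonable outline of that proof, but for the purposes of this paper the citation alone suffices.
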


In summary, after the first step of the normal form reductions we have arrived at
\begin{align}
&\tw(t')\big\vert_{t'=0}^{t'=t}  =
\int_0^t \mN_{\leq M}^{(1)}(\tw,\tv)(t') dt' + \int_0^t \tE(t') dt' +  \mN^{(1)}_{0} (\tw,\tv)(t') \Big|^{t'=t}_{t'=0}   \notag \\
&   -\sum_{N_{12}} \sum_{N_3\les N_{12}} 
\bigg\{      
\sum_{\substack{ N_1, N_2 \\ N_2\les N_1} }
 \int^t_0 \mN^{(2)}_1 (\tw,\tv,\tv)(t')dt'
  + \int_0^t \mN^{(1)}_0(\P_{N_{12}} \tE, \tv_{N_3}) dt'
\bigg\}  \label{wfirst}
\\
&   -\sum_{N_1} \sum_{N_{23}\les N_1} \bigg\{
  \sum_{N_2, N_3}  \int^t_0 \mN^{(2)}_{\leq M} (\tw,\tv,\tv)  +\mN^{(2)}_{2} (\tw,\tv,\tv) dt'   +\int_0^t \mN^{(1)}_0(\tw_{N_1},\P_{N_{23}} \Qdl \dx \tv)  dt'
\bigg\}. \notag
\end{align}
Relative to \cite[(3.42)]{MP-2023}, the only new terms here are the second and fifth (implicitly through \eqref{Edef}), and the last term. We show that the summations in the fifth and the last terms converge absolutely. By Lemma \ref{bd-est-1}, for $t\in[0,T]$, $s> 0$ and $0<\ta_1<\frac12$ and $\ta_2>0$ such that $\ta_2<\min(s, \tfrac 12 -\ta_1)$, we have 
%
\begin{align}
\begin{split}
 &
  \sum_{N_{12}} \sum_{N_3\les N_{12} } 
  \int_0^t
  \| 
  \mN^{(1)}_{0} (\P_{N_{12}} \tilde E, \tv_{N_3} )
  (t') \|_{H^{s+\ta_1} } dt'\\
  & \les T M^{-\frac18+\frac{\ta_1+\ta_2}{4}} 
  \sum_{N_{12}} \sum_{N_3\les N_{12} }
   \jb{N_{12}}^{-\ta_2}
   \|\P_{N_{12}} \wt{E}\|_{L^\infty_T  H^s} 
   \|v\|_{ L^\infty_T  L^2}\\
    & \les T M^{-\frac18+\frac{\ta_1+\ta_2}{4}}
     \|\wt{E}\|_{L^\infty_T  H^s} 
     \|v\|_{L^\infty_T  L^2} \\
     & \les T M^{-\frac18+\frac{\ta_1+\ta_2}{4}} \|v\|_{L^{\infty}_{T}L^2}^{2} \big\{ 1+\|v\|_{L^{\infty}_{T}L^2}+\dl^{-1}(1+\dl^{-1-s})(1+\|v\|_{L^{\infty}_{T}H^{s}})^2\big\},
\end{split}\label{ER-2}
\end{align}
where in the last inequality we used Lemma \ref{est-error} and \eqref{error-3}.
Similarly, by Lemma \ref{bd-est-1} and Lemma \ref{Q-est},
 for $t\in[0,T]$, $s> 0$, we have
\begin{align}
 \sum_{\substack{N_1, N_{23} \\ N_{23}\les N_1}}  \int_0^t  \|\mN^{(1)}_0  (\tw_{N_1},\P_{N_{23}}\Qdl \dx \tv )\|_{H^{s+\ta_1}} dt' 
 \les T M^{-\frac18+\frac{\ta_1+\ta_2}{4}} \dl^{-2} \|w\|_{L^\infty_T H^s}  \|v\|_{L^\infty_T L^2}. \label{dlerror}
\end{align}
It remains to justify that the summations over the terms $\mN^{(1)}_{1}$ and $\mN^{(2)}_2$ are finite and this involves a second round of integration by parts which is the role of the next section.

\subsection{Second step of the normal form reductions}\label{SEC:2nd}


First, we note that due to the frequency restrictions, we have $m_j^{(2)}\in L^{\infty}(\R^4)$ and $|\Omega_{j}^{(2)}|>M$ for each $j=1,2$. This is clear for $j=1$ from \eqref{m12O12} and can be deduced for $j=2$ by further splitting $R^{(2)}_{>M}$ into sub-regions depending on the signs of $\xi_2$ and $\xi_3$. 

We then apply integration by parts to estimate the terms $\mN^{(2)}_{j}$, $j=1,2$. To justify this, we argue as we did in the previous section for the first step. Namely, we need to decompose $\wt{w}$ according to \eqref{dec-w}. As the good term $G$ is continuously differentiable in $t$, we can apply integration by parts and the product rule. For the bad term $B\in C_T L^2$, we use the following lemma, which is analogous to Lemma~\ref{IBP-1}.

\begin{lemma}\label{IBP-2}
Let $v\in C_{T}L^2$ be a solution to \eqref{eq-tv}. For $N_1,N_2,N_3\in 2^{\mathbb{N}}$, define $ \tv_{N_{\l}} = \P_{N_{\l}}e^{-t\H \dx^2}v(t)$ for $\l=2,3$, and $B_{N_1}=\P_{N_1}B$, where $B\in C_{T} L^2$ is defined in \eqref{B}.
Then, for any $t_1,t_2\in [0,T]$, $M>0$, fixed $\xi\in \R$, and for each $j=1,2$, it holds that
\begin{align} 
\begin{split}
  &\quad   \int_{t_1}^{t_2} 
  \int_{\xi=\xi_{123}} e^{-it \Omega^{(2)}_j } f_j(t) d\xi_1d\xi_2 dt  \\
&  =  \bigg[ \int_{\xi=\xi_{12}}
   \frac{e^{-it \Omega^{(2)}_j} }{-i \Omega^{(2)}_j } f_j(t)d\xi_1 
   \bigg]
  \bigg \vert^{t=t_2}_{t=t_1} 
  -\int_{t_1}^{t_2} \int_{\xi=\xi_{12}}  
  \frac{e^{-it \Omega^{(2)}_j} }{-i \Omega^{(2)}_j } g_j(t) d\xi_1d\xi_2 dt, 
\end{split}
\label{ibp-2-B}
\end{align}
\noi where
\begin{align*}
f_j(t)&= 
m_j^{(2)} (\xi,\xi_1,\xi_2,\xi_3)  
  \ft{B}_{N_1} (t,\xi_1)
 \ft{\tv}_{N_2} (t,\xi_2)
  \ft{\tv}_{N_3} (t,\xi_3), \\
    g_j(t) & = m_j^{(2)} (\xi,\xi_1,\xi_2,\xi_3)   
  \ft{B}_{N_1} (t,\xi_1)
  \dt \ft{\tv}_{N_2} (t,\xi_2)  
   \ft{\tv}_{N_3} (t,\xi_3)\\
   & \quad  + m_j^{(2)} (\xi,\xi_1,\xi_2,\xi_3)    
 \ft{B}_{N_1} (t,\xi_1)
 \ft{\tv}_{N_2} (t,\xi_2) 
   \dt \ft{\tv}_{N_3} (t,\xi_3)\\
 &  \quad  +m_j^{(2)} (\xi,\xi_1,\xi_2,\xi_3)   
\F\{\P_{N_1}E_2(e^{iF},v)\}  (t,\xi_1)
 \ft{\tv}_{N_2} (t,\xi_2)   \ft{\tv}_{N_3} (t,\xi_3),
\end{align*}
 $\Omega_{1}^{(2)}$ and $m_{1}^{(2)}$ are defined in \eqref{m12O12}, $\Omega_{1}^{(2)}$ is defined in \eqref{O2}, and $m_{2}^{(2)}$ is defined in \eqref{m22}.
\end{lemma}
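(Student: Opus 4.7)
I will mirror the strategy used for Lemma~\ref{IBP-1}, only now applied to a trilinear expression with two possible phase functions $\Omega^{(2)}_j$ and multipliers $m^{(2)}_j$, $j=1,2$. The first observation is that on the supports of $m^{(2)}_1$ and $m^{(2)}_2$ we have $|\Omega^{(2)}_j|>M$ and, by the dyadic localisations plus the frequency restrictions defining $R^{(2)}_{>M}$, the quotient $m^{(2)}_j/\Omega^{(2)}_j$ is a bounded multiplier. In particular, with $B_{N_1},\tv_{N_2},\tv_{N_3}\in C_TL^2$, the integrand $f_j(t)/\Omega^{(2)}_j$ lies in $C_TL^1_{\xi_1,\xi_2}$ for each fixed $\xi$, so all integrals below make sense and Fubini is available throughout.

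The plan is to avoid a direct integration-by-parts in $t$ (which would require $C^1$-in-time regularity of $\ft B_{N_1}$, exactly what we do not have) and instead use a difference-quotient approximation, as in \cite[Appendix A]{K19}. Fix $h\neq 0$ with $t_1+h,t_2+h\in[0,T]$ and write the trilinear analogue of \eqref{IDT}, namely
\begin{align*}
&\int_{t_1+h}^{t_2} \intt_{\xi=\xi_{123}} \frac{e^{-it\Omega^{(2)}_j}-e^{-i(t-h)\Omega^{(2)}_j}}{ih\Omega^{(2)}_j}\, f_j(t)\, d\xi_1 d\xi_2 dt \\
&\quad = \frac1h\int_{t_2}^{t_2+h}\intt_{\xi=\xi_{123}}\frac{e^{-i(t-h)\Omega^{(2)}_j}}{i\Omega^{(2)}_j} f_j(t)\,d\xi_1 d\xi_2 dt - \int_{t_1}^{t_1+h}\intt_{\xi=\xi_{123}}\frac{e^{-it\Omega^{(2)}_j}}{i\Omega^{(2)}_j} f_j(t)\,d\xi_1 d\xi_2 dt \\
&\qquad - \int_{t_1}^{t_2}\intt_{\xi=\xi_{123}}\frac{e^{-it\Omega^{(2)}_j}}{i\Omega^{(2)}_j}\, D_h f_j(t)\,d\xi_1 d\xi_2 dt,
\end{align*}
and then send $h\to0$. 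The identity \eqref{ibp-2-B} will follow once we verify three limits mirroring \eqref{lm-1}, \eqref{lm-2}, \eqref{lm-3}.

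The first two limits are essentially pointwise in $\xi$ applications of the dominated convergence theorem, using $|\Omega^{(2)}_j|>M$ on the support of $m^{(2)}_j$ together with $B_{N_1}, \tv_{N_2}, \tv_{N_3}\in C_TL^2$ to produce an $h$-independent $L^1_{t,\xi_1,\xi_2}$-dominating envelope; the argument is the verbatim trilinear analogue of the bilinear case and introduces nothing new. The substantive step is the analogue of \eqref{lm-3}: one has to show $D_h f_j(t)\to g_j(t)$ after integration against the bounded multiplier $m^{(2)}_j/\Omega^{(2)}_j$. Writing
\begin{align*}
D_h f_j(t) = m_j^{(2)}\bigl( &D_h\ft B_{N_1}(t,\xi_1)\,\ft\tv_{N_2}(t+h,\xi_2)\ft\tv_{N_3}(t+h,\xi_3) \\
&+ \ft B_{N_1}(t,\xi_1)\,D_h\ft\tv_{N_2}(t,\xi_2)\,\ft\tv_{N_3}(t+h,\xi_3) \\
&+ \ft B_{N_1}(t,\xi_1)\,\ft\tv_{N_2}(t,\xi_2)\,D_h\ft\tv_{N_3}(t,\xi_3)\bigr),
\end{align*}
one treats the three pieces in turn. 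For the first, the definition \eqref{B} gives $D_h\ft B_{N_1}\to\F\{\P_{N_1}E_2(e^{iF},v)\}$ in $C_TL^2_{\xi_1}$ since $E_2(e^{iF},v)\in C_TL^2$, and continuity of $\tv$ handles the shifted factors. For the other two pieces we use Lemma~\ref{conti}, the equation \eqref{eq-tv2}, Bernstein, and Lemma~\ref{Q-est} exactly as in the bilinear case to conclude $D_h\ft\tv_{N_\l}\to \dt\ft\tv_{N_\l}$ in $L^2_{\xi_\l}$ with an $h$-uniform bound depending polynomially on $N_\l$, $\dl^{-1}$ and $\|v\|_{C_TL^2}$. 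Combining these three convergences with dominated convergence in $(t,\xi_1,\xi_2)$ yields the desired limit, giving \eqref{ibp-2-B}.

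The main obstacle is purely that of bookkeeping: one must check that for \emph{both} choices $j=1,2$ the multiplier $m^{(2)}_j/\Omega^{(2)}_j$ is bounded by a constant times an $h$-uniform dominating function. For $j=1$ this is immediate from \eqref{m12O12} and $|\Omega^{(2)}_1|>M$. For $j=2$ one must use the sub-case decomposition of $R^{(2)}_{>M}$ according to the signs of $\xi_2,\xi_3$ (as in \cite{MP-2023}) to confirm $|\Omega^{(2)}_2|\ges M$ on the support of $m^{(2)}_2$; once that is done, the rest of the argument goes through unchanged.
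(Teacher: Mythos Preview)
Your proposal is correct and follows essentially the same approach as the paper: the paper's proof is terse, simply stating that one argues as in Lemma~\ref{IBP-1} via the difference-quotient identity \eqref{IDT} and dominated convergence, and then explicitly verifies only the key integrability ingredient $f_j\in C_TL^1_{\xi_1,\xi_2}$ (by bounding $\int_{\xi=\xi_{123}}|f_j|\,d\xi_1d\xi_2$ via $\|B_{N_1}\|_{L^2}\|\tv_{N_2}\|_{L^2}\|\ft\tv_{N_3}\|_{L^1}\les N_3\|B_{N_1}\|_{L^2}\|\tv_{N_2}\|_{L^2}\|\tv_{N_3}\|_{L^2}$). Your write-up simply spells out the three limits and the discrete Leibniz decomposition of $D_hf_j$ in more detail, and your remark on the $j=2$ sign decomposition to ensure $|\Omega^{(2)}_2|>M$ matches what the paper notes just before the lemma.
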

\begin{proof}
 We argue as in the proof of Lemma \ref{IBP-2}, taking limits in the analog of \eqref{IDT}. These limits exist and yield \eqref{ibp-2-B} since we have $\ft \tv_N,\dt \ft \tv_N\in C_TL^2_\xi \cap C_T L^1_\xi$ and $E_2(e^{iF},v)\in C_T L^2$ and we use the dominated convergence theorem.

  For the reader's convenience, we show a limit analogous to \eqref{lm-1}: for fixed $\xi$,
  \begin{align}
\int_{t_1+h}^{t_2} \int_{\xi=\xi_{123}}
  \frac{e^{-it \Omega^{(2)}_j} -e^{-i(t-h) \Omega^{(2)}_j} }{ih \Omega^{(2)}_j } f_j(t) d\xi_1  d\xi_2 dt \to 
  \int_{t_1}^{t_2} 
  \int_{\xi=\xi_{12}} e^{-it \Omega^{(2)}_j } f_j(t) d\xi_1 d\xi_2 dt, \label{lm1}
 \end{align} 
  as $h\to 0$.
For each $j=1,2,3$, H\"{o}lder's inequality implies
\begin{align*}
  \int_{\xi=\xi_{123} } |f_j(t)| d\xi_1d\xi_2 & \les  \big( |\ft  B_{N_1} | \ast |\ft  \tv_{N_2} | \ast |\ft \tv_{N_3} |\big)(\xi)
  \les \|B_{N_1}\|_{L^2 }
   \| \tv_{N_2}\|_{L^2 }
    \|\ft  \tv_{N_3}\|_{L^1 }\\
  &  \les N_3 T \|E_2(e^{iF},v)\|_{C_T L^2 }
   \| \tv_{N_2}\|_{C_T L^2 }
    \|\ft  \tv_{N_2}\|_{C_T L^2},
  \end{align*}
  uniformly for $t\in[0,T]$, and thus $f_j\in C_T L^1_{\xi_1,\xi_2}$ which is enough to prove \eqref{lm1}.
It is straightforward then to adapt this idea to prove analogs of the limits \eqref{lm-2} and \eqref{lm-3}.
We omit the details.
\end{proof}

Applying integration by parts  for the contributions with $G$ and Lemma \ref{IBP-2} for those with $B$,  we obtain
\begin{align}
\begin{split}
\int_0^t \mN_j^{(2)}(\tw,\tv,\tv) dt' & = 
\big[ \mN^{(2)}_{j,0}(\tw,\tv,\tv) \big] \big|^{t'=t}_{t'=0} 
- \int_0^t  \mN^{(2)}_{j,0}( \dt \tw,\tv,\tv) dt'\\
& \quad - \int_0^t  \mN^{(2)}_{j,0}( \tw, \dt\tv,\tv) dt'
 -\int_0^t   \mN^{(2)}_{j,0}( \tw,\tv, \dt\tv) dt' ,
\end{split}\label{ibp-2}
\end{align}
where for $j=1,2$, we defined
\begin{align*}
\begin{split}
\F\{\mN_{j,0}^{(2)}(v_1,v_2,v_3)  \}(t,\xi) =\int_{\xi=\xi_{123}} 
e^{-it\Omega^{(2)} _j(\xi,\xi_1,\xi_2,\xi_3)  } 
 \frac{m_j^{(2)} (\xi,\xi_1,\xi_2,\xi_3)}{-i \Omega^{(2)}_j (\xi,\xi_1,\xi_2,\xi_3)} \prod_{k=1}^3  \ft v_{k} (\xi_k) d\xi_1 d\xi_2.
\end{split}
\end{align*} 
Provided that $s>s_0$, we can successfully control all the terms in \eqref{ibp-2} and gain a small power of the largest dyadic frequency to justify the convergence of the sums in \eqref{wfirst}.



\begin{lemma} 
\label{cor}
Let $0<\dl<\infty$ and $s_0<s\leq \frac14 $, where $s_0$ is as in Theorem~\ref{THM:conv}. 
Assume $0<T<1$ and let $v, v^{\dag}\in C_T H^s$ be two
solutions to \eqref{eq-v} with the same initial data. 
Then, there exists $\ta>0$ such that for any $\eps>0$ sufficiently small
\begin{align*}
\big\| \mN^{(2)}_j(\tw,\tv,\tv)  - 
\mN^{(2)}_j(\twda,\tvda,\tvda)  \big\|_{L^1_T H^{s+\ta} } & \les
N_{\max}^{-\eps} 
 (M^{-\frac{1}{8}}
 +T   )
  ( 1 +  \dl^{-1}+
\|v\|_{L^\infty_T H^s}  +\|\vda\|_{L^\infty_T H^s} )^8 \\
& \quad \quad  \times  (  \|w-\wda \|_{L^\infty_T H^s} +  \|v-\vda \|_{L^\infty_T H^s}),
\end{align*} 
for each $j=1,2$, where $\tvda= e^{-t\H \dx^2} \tv$ and $\twda = e^{ it\dx^2 }
\wda$ and $N_{\max}:=\max_{j=1,2,3} N_j$.
 
\end{lemma}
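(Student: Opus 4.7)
The plan is to apply the integration-by-parts identity \eqref{ibp-2} to both $\mN^{(2)}_j(\tw,\tv,\tv)$ and $\mN^{(2)}_j(\twda,\tvda,\tvda)$, take their difference, and estimate each resulting term in $L^1_T H^{s+\ta}$. By multilinearity of $\mN^{(2)}_{j,0}$, the difference of the boundary piece and of each of the three interior pieces splits into a sum of three terms, in each of which exactly one slot carries a factor of the form $(\cdot)-(\cdot)^\dag$, supplying $\|w-\wda\|_{L^\infty_T H^s}$ or $\|v-\vda\|_{L^\infty_T H^s}$, while the remaining two slots depend only on the individual solutions and are bounded by $\|v\|_{L^\infty_T H^s}$, $\|\vda\|_{L^\infty_T H^s}$, or $\|w\|_{L^\infty_T H^s}\les 1+\|v\|_{L^\infty_T H^s}$ (via \eqref{PhieF}).

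The central input is the pointwise multiplier bound $|m_j^{(2)}/\Omega_j^{(2)}|\les M^{-1}$ on the support of $m_j^{(2)}$, together with the trivial bound $|m_j^{(2)}|\les 1$. Interpolation then produces a gain of the form $M^{-1/8}$ combined with a small polynomial gain $N_{\max}^{-\eps}$ after dyadic decomposition, valid for $s>s_0$ and $\ta$ and $\eps$ both sufficiently small. For the boundary pieces $[\mN^{(2)}_{j,0}(\cdot,\cdot,\cdot)]\big|^{t'=t}_{t'=0}$, Plancherel combined with this multiplier bound and the frequency-localised Sobolev embedding reduces matters to a triple product of $H^s$ norms evaluated at $t'=0,t$, yielding the $M^{-1/8}$ portion of the asserted estimate without any power of $T$.

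For the interior contributions $\int_0^t \mN^{(2)}_{j,0}(\dt\tw,\tv,\tv)\,dt'$ and its two cyclic variants with $\dt\tv$ in the second or third slot, I would substitute the evolution equations \eqref{eq-tw} and \eqref{eq-tv} for $\dt\tw$ and $\dt\tv$. After substitution, these become quadrilinear space-time integrals, which I would control by combining the refined Strichartz estimates (Lemma~\ref{re-Stri}) with the bounds of Lemma~\ref{dt-est}: distributing factors across $L^p_T L^q_x$ spaces yields a $T^{1/p} N^{\al(s,p)}$ factor which, combined with the $M^{-1}$-gain and judicious dyadic frequency summation, produces the $T$-linear part of the bound. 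Relative to \cite{MP-2023} there are two genuinely new contributions: the $E_2$ term in \eqref{eq-tw} involving $\Qdl v$, which by Lemma~\ref{Q-est} and \eqref{error-3} is $(\tfrac{3}{2}+)$-smoothing with a $\dl^{-1}$ prefactor and is therefore absorbed into the factor $(1+\dl^{-1}+\cdots)^{8}$; and the $\Qdl \dx v$ contribution to \eqref{eq-tv} when substituted for $\dt\tv$, handled identically. Differences of the $E$ terms and of the exponentials $e^{iF}-e^{iF^\dag}$ are controlled by Lemma~\ref{est-error} combined with Lemma~\ref{dif-F}, the hypothesis $\Pblo v(0) = \Pblo v^\dag(0)$ being trivially satisfied since $v$ and $\vda$ share the same initial data.

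The main obstacle is the interior contribution $\int_0^t \mN^{(2)}_{j,0}(\dt\tw,\tv,\tv)\,dt'$ once $\dt\tw$ is replaced via the nonlinear term in \eqref{eq-tw}: this yields a cubic-type interaction whose derivative losses impose the tightest constraint in the whole argument. Balancing the $M^{-1}N_{\max}^{-\eps}$ gain from the off-resonance multiplier against these derivative losses, using refined Strichartz together with the $\tfrac{1}{2}$-Bernstein embedding, is precisely what pins the regularity threshold at $s_0 = 3-\sqrt{33/4}$ and matches the analogous bottleneck in \cite{MP-2023}. Summing over the dyadic frequencies $N_1,N_2,N_3$ using the polynomial gain $N_{\max}^{-\eps}$ then yields the stated bound with polynomial dependence of degree at most $8$ in $1+\dl^{-1}+\|v\|_{L^\infty_T H^s}+\|\vda\|_{L^\infty_T H^s}$.
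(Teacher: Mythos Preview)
Your proposal is correct and follows essentially the same route as the paper: apply the integration-by-parts identity \eqref{ibp-2}, estimate the boundary term via the fixed-time trilinear bound on $\mN^{(2)}_{j,0}$ (cf.\ \cite[Lemma~3.9]{MP-2023}), and handle the interior pieces by telescoping and controlling the time-derivative slots through \eqref{dif-v} and \eqref{dif-w-2}. One small packaging difference: the paper (following \cite[Corollary~3.11]{MP-2023}) does \emph{not} substitute the equations for $\dt\tw,\dt\tv$ to obtain quadrilinear space-time integrals and then apply refined Strichartz directly; rather it uses the fixed-time trilinear estimate \cite[Lemma~3.10]{MP-2023} on $\mN^{(2)}_{j,0}$ (which yields the $N_{\max}^{-1-s+\eta+\ta}$ gain) and then inserts Lemma~\ref{dt-est} for $\|\P_N\dt(\tw-\twda)\|_{L^1_TL^2}$ and $\|\P_N\dt(\tv-\tvda)\|_{L^1_TL^2}$---the refined Strichartz input is already absorbed into Lemma~\ref{dt-est}. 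This modular approach is cleaner than unpacking to quadrilinear form, but the content and the resulting threshold $s>s_0$ are the same.
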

\begin{proof}
We estimate the right-hand side of \eqref{ibp-2}. The boundary term is handled by
\begin{align*}
\max_{j=1,2}\| \mN^{(2)}_{j,0}(v_1,v_2,v_3) \|_{H^{s+\frac12}} 
\les M^{-\frac14+\eps} \|v_1\|_{H^s} 
\|v_2\|_{L^2} \|v_3\|_{L^2},
\end{align*}
for any $s\geq 0$ and $\eps>0$. See \cite[Lemma 3.9]{MP-2023} for a proof. The remaining terms in RHS\eqref{ibp-2} are then estimated as in the proof of \cite[Corollary 3.11]{MP-2023} where the differences $\dt(\tv- \tvda)$ and $\dt(\tw-\twda)$ are now controlled by \eqref{dif-v} and \eqref{dif-w-2}, respectively. Consequently, we have the same restriction $s>s_0$ as in \cite[Corollary 3.11]{MP-2023}. Note that in \cite{MP-2023} the case $j=2$ requires further decomposition but since these terms are unaltered in our setting, we have omitted such detail.
\end{proof}

\begin{proof}[Proof of Proposition \ref{PROP:wtov}]

By the above normal form reductions, we arrive at the normal form equation for the gauged variables $\wt{w}$ given in \eqref{wfirst} with \eqref{ibp-2}.
As each of the summations converge absolutely, when we consider the difference $\wt w$ and $\wt{w}^{\dag}$, we can use telescoping sums to rewrite each of the differences. Then, the estimate \eqref{dif-w} follows from Lemmas~\ref{est-error}, \ref{dif-F}, \ref{N-1<M}, \ref{bd-est-1}, \ref{N-2<M}, \eqref{ER-2}, \eqref{dlerror} and Lemma~\ref{cor}.
\end{proof}

\section{On the periodic setting}\label{SEC:T}

In this section, we consider the ILW on the circle and detail the necessary modifications needed to complete the proof of Theorem~\ref{main-thm} in the setting $\M=\T$.  Namely, establishing unconditional uniqueness for $s_0<s <1/2$, where $s_0$ is as in Theorem~\ref{THM:conv}.

We first discuss the gauge transformation in the periodic case which is slightly different to that on the line in Section~\ref{SEC:GaugeR}. Fix $0<\dl<\infty$.
Let $u\in C([0,T];L^2(\T))$ be a solution \eqref{ILW} for some $T>0$ and let $v$ be its corresponding Galilean transform as in \eqref{Gal}. As $v$ is a distributional solution to \eqref{eq-v} it follows that its mean $\int_\T v(t,x)dx$ is conserved. Upon considering the transformation $v(t,x) \mapsto v(t,x-\tfrac{t}{\pi} \int_{\T} u_0  ) - \tfrac{1}{2\pi} \int_{\T} u_0(x)dx$
we may assume that $v$ is of mean-zero for all times $t$.
We define the primitive $F$ of $v$ via its Fourier coefficients:
\begin{align*}
\ft F (t,0) = 0, \quad \text{and} \quad \ft F(t,n) = \tfrac{1}{in} \ft v(t,n) \quad \text{for} \,\, n\in \Z\setminus\{0\}.
\end{align*}
As $v$ is mean-zero, we have $F=\dx^{-1}v$. Note that $F$ satisfies a version of Lemma \ref{dif-F} trivially due to Bernstein's inequality. Moreover, it solves the equation
\begin{align*}
  \dt F -\H \dx^2 F =v^2-\P_{0}(v^2) +\Qdl v,
\end{align*}
where $\P_0 f := \ft{f}(0)$. In the periodic case, $e^{iF}\in L^2(\T)$ so $\P_{\pm} (e^{iF})$ are well-defined. 
We define the gauge transformed variable 
\begin{align}
w=\dx \P_{+} e^{iF} \label{wperiodic}
\end{align}
 and we see that $w$ satisfies
\begin{align}
\begin{split}
 \dt w- \H \dx^2 w& =-2 \dx \P_{+}(\dx^{-1} w \cdot  \P_{-} \dx v )+ i\dx \P_{+} (e^{iF} \Qdl v)-i\P_{0}(v^2)w .  
\end{split} \label{eq-w-T-m}
\end{align}
Similar to the BO case in \cite{MP-2023}, we cannot assume that the $L^2$-norm of $v$ is conserved in time. Thus, it is beneficial to remove the last term on the right-hand side of \eqref{eq-w-T-m} by using a further gauge transformation 
\begin{align*}
 z(t,x) :=w(t,x) e^{i \g(t)} \quad \text{where} \quad \g(t)=\g(t)[v] : = \int_{0}^{t} \P_{0}(v^2(t'))dt'.
\end{align*}
Then, $z$ solves 
\begin{align*}
\dt z- \H\dx^2 z 
& = -2 \dx \Pbhip(\dx^{-1} z \cdot  \P_{-} \dx v )  +E_{3}[z,v] + e^{i\g}E_{4}[e^{iF}, v],
\end{align*}
where 
\begin{align*}
E_{3}[f,g]: = -2\P_{+}\Pblo( \dx^{-1} f \cdot \P_- \dx g) \quad \text{and} \quad E_{4}[f,g] = i\dx \P_{+}(f \Qdl g). 
\end{align*}
Note that $E_3$ and $e^{i\g} E_4$ satisfy similar estimates to \eqref{error-1} and \eqref{error-2}, respectively.
In particular, for each fixed $n\in \Z$, Cauchy-Schwarz and Lemma~\ref{Q-est} give
\begin{align*}
|\F\{ e^{i\g[f]}\dx \P_{+}[ e^{iF[g]}\Qdl h]\}(n)| \les  |n| \|e^{iF[g]}\|_{L^{2}} \|\Qdl h\|_{L^2}  \les \dl^{-1}\|h\|_{L^2},
\end{align*}
which implies that the maps $t\mapsto \ft z(t,n), \ft{\wt{z}}(t,n)$, where $\wt{z}:=e^{-t\H \dx^2}z$, are continuously differentiable for each fixed $n$. This means that we can apply the product rule in our normal form reductions and thus we do not need to consider a decomposition of the form \eqref{dec-w}.

The refined Strichartz estimates and hence Lemma \ref{dt-est} are slightly different in the periodic case. We give them here to also indicate to indicate the location of the $\dl$-dependence in the estimate which comes from an application of Lemma~\ref{Q-est}. Apart from this, the argument is exactly the same as that in \cite[Lemma 5.1]{MP-2023} so we omit the proof. 
We point out that it is likely possible to adapt the strategy of Kishimoto~\cite{K19} to \eqref{eq-v} and establish unconditional uniqueness for $\frac 16<s<\frac 12$ on $\T$ without having to use the refined Strichartz estimates.

\begin{lemma}\label{LEM:StrT}
Let $0\leq s\leq \frac 14$, $0<\dl<\infty$, $N\in 2^{\N}$, $T>0$, and $2\leq p\leq 4$. If $u$ is a solution to $\dt u-\H\dx^2 u= \dx(u_1 u_2) +\Qdl \dx u_3.$ Then, we have 
\begin{align}
\|\P_N u\|_{L^{p}_{T,x}} \les T^{\frac 1p}N^{\be(s,p)}\big( \|\P_{N}u\|_{L^{\infty}_{T}H^{s}} +\|u_1\|_{L^{\infty}_{T}H^{s}}\|u_2\|_{L^{\infty}_{T}H^{s}} + \dl^{-2}\|u_3\|_{L^{\infty}_{T}L^2} \big), \label{StrichT}
\end{align}
where $\be(s,p) = \big( \tfrac{3}{2}-s\big) \big(\tfrac{1}{4}-\tfrac{1}{2p}\big)-s.$
\end{lemma}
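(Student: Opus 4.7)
The plan is to adapt the proof of the real-line refined Strichartz estimate, Lemma \ref{re-Stri}(i), to the periodic setting. The two differences are: (a) the $\R$-Strichartz estimates for the Benjamin-Ono group $e^{t\H \dx^2}$ must be replaced by their $\T$-counterparts (e.g., the $L^4_{t,x}$ estimate for the BO group on $[0,1]\times \T$, available without derivative loss via a Bourgain-type argument); and (b) the new source term $\Qdl \dx u_3$ must be incorporated, which is straightforward thanks to the strong smoothing provided by Lemma \ref{Q-est}.

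First I would handle the two endpoints $p=2$ and $p=4$ separately, noting that the interpolation weight $\theta = 4/p - 1$ gives
\[
\beta(s,p) \;=\; \theta \cdot (-s) + (1-\theta)\cdot\bigl(\tfrac{3}{16}-\tfrac{9s}{8}\bigr) \;=\; \bigl(\tfrac{3}{2}-s\bigr)\bigl(\tfrac{1}{4}-\tfrac{1}{2p}\bigr) - s,
\]
so that the full range $2\leq p\leq 4$ follows by Riesz-Thorin interpolation once the endpoints are established. The $p=2$ endpoint is immediate from H\"older in time and the trivial estimate $\|\P_N u\|_{L^2_x} \leq N^{-s}\|\P_N u\|_{H^s}$:
\[
\|\P_N u\|_{L^2_{T,x}} \;\leq\; T^{1/2} N^{-s}\|\P_N u\|_{L^\infty_T H^s},
\]
matching $\beta(s,2) = -s$ (the bilinear and $\Qdl$ source terms appearing trivially).

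For the $p=4$ endpoint I would partition $[0,T]$ into $\sim TN$ subintervals $I_j$ of length $\sim N^{-1}$ (the natural decorrelation scale of the Benjamin-Ono flow at frequency $N$), apply the Duhamel representation on each $I_j$, and use the periodic $L^4$-Strichartz on the homogeneous piece together with the corresponding dual/energy Strichartz bound on the Duhamel integral of $F := \dx(u_1 u_2) + \Qdl \dx u_3$. The bilinear source $\P_N \dx(u_1 u_2)$ is controlled by combining Bernstein with the localised product estimate \eqref{prod1} (efficient for $\tfrac14 < s < \tfrac12$; for $0\leq s\leq \tfrac14$ one can fall back on Lemma \ref{prod-1}). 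The smoothing term $\Qdl \dx u_3$ is bounded by Lemma \ref{Q-est}, producing the $\dl^{-2}$ factor. Summing the $4$-th powers of the $L^4_{I_j,x}$ bounds over the partition and taking the $4$-th root yields the $p=4$ estimate with exponent $\beta(s,4) = \tfrac{3}{16} - \tfrac{9s}{8}$.

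I expect the main technical obstacle to be producing precisely the exponent $\beta(s,4)$: this requires efficient accounting of the $T$- and $N$-powers arising from the partition, together with a sufficiently sharp bilinear estimate across the full range $0\leq s < \tfrac12$. Extending beyond the $s\leq \tfrac14$ regime of \cite[Lemma 5.1]{MP-2023} is exactly what necessitates using \eqref{prod1} in place of a cruder product bound. Once the endpoint exponents are matched, Riesz-Thorin interpolation with the easy $p=2$ estimate covers the full range $2\leq p\leq 4$.
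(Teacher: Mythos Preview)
Your approach is essentially the paper's, with two small points worth flagging.

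First, a minor organizational difference: the paper interpolates at the level of the \emph{linear} Strichartz estimate, combining the periodic Zygmund $L^4$ bound with the trivial $L^2$ bound to get $\|e^{t\H\dx^2}f\|_{L^p([0,T]\times\T)}\lesssim T^{3/(2p)-1/4}\|f\|_{L^2}$ for all $2\leq p\leq 4$, transfers this to the Duhamel piece via Christ--Kiselev, and then runs the refined (partition) argument directly at each $p$. You instead do the refined argument only at $p=4$ and interpolate the \emph{final} estimate. That is fine, but ``Riesz--Thorin'' is a misnomer here: there is no linear operator being interpolated, only the single function $\P_N u$, so what you are actually using is H\"older's inequality (log-convexity of the $L^p$ norms).

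Second, the partition length $\sim N^{-1}$ does not reproduce $\beta(s,4)=\tfrac{3}{16}-\tfrac{9s}{8}$. Balancing the homogeneous piece against the inhomogeneous piece (with $\|\P_N\dx(u_1u_2)\|_{L^2}\lesssim N^{3/2-2s}\|u_1\|_{H^s}\|u_2\|_{H^s}$ from \eqref{prod1}) forces subintervals of length $\sim N^{-(3/2-s)}$; this is exactly the exponent structure of $\al(s,p)$ in Lemma~\ref{re-Stri}. You correctly anticipate this bookkeeping as the delicate point and correctly identify \eqref{prod1} as the only new ingredient needed for the range $\tfrac14<s<\tfrac12$, which is precisely how the paper handles the extension.
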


Using \eqref{StrichT}, we can derive analogous results to \eqref{RS-2}, \eqref{dif-v}, and \eqref{dif-w-2}.
All of the normal form considerations are then carried out exactly as in Section~\ref{NF} and the proofs of Theorem~\ref{main-thm} and Corollary~\ref{main-cor} then follow the same arguments. This completes the proof of the unconditional uniqueness for ILW in $C_{T} H^{s}(\T)$ for all $s_0 <s<\frac 14$. Finally, we note that unconditional uniqueness in the range $\frac 14 \leq s <\frac 12$ then follows from the fact that unconditional uniqueness in $C_{t}H^{s}$ implies unconditional uniqueness in $C_{t}H^{s'}$ for any $s' > s$.

\begin{ackno}\rm 
The authors would like to warmly thank Tadahiro Oh for suggesting the problem.
	J.F. and G.L. were supported by the European Research Council
	(grant no. 864138 ``SingStochDispDyn''). 
T.Z.~was supported by the National Natural Science Foundation of China (Grant No. 12101040, 12271051, and 12371239) and by a grant from the China Scholarship Council (CSC). T.Z.~would like to thank the University of Edinburgh for its hospitality where this manuscript was prepared.
The authors would like to thank the anonymous referee for their helpful comments which have greatly improved the presentation of this manuscript.
\end{ackno}

\end{document}